\newcommand{\Cee}{{\cal{C}}}
\newcommand{\Ef}{{\cal{F}}}
\newcommand{\Pee}{{\cal{P}}}
\newcommand{\See}{{\cal{S}}}
\newcommand{\Nat}{{\Bbb{N}}}
\newcommand{\Qyu}{{\Bbb{Q}}}
\newcommand{\Qyuc}{{\cal{Q}}}
\newcommand{\Err}{{\Bbb{R}}}
\newcommand{\al}{\alpha}
\renewcommand{\phi}{\varphi}
\renewcommand{\rho}{\varrho}
\newcommand{\rest}{\restriction}
\newcommand{\ntr}{n\in\omega}
\newcommand{\loe}{\leq}
\newcommand{\goe}{\geq}
\newcommand{\subs}{\subseteq}
\newcommand{\sups}{\supseteq}
\newcommand{\nnempty}{\ne\emptyset}
\renewcommand{\iff}{\Longleftrightarrow}
\newcommand{\cl}{\operatorname{cl}}
\newcommand{\id}[1]{\operatorname{id}_{#1}}
\newcommand{\dom}{\operatorname{dom}}
\newcommand{\io}{\operatorname{io}}
\newcommand{\by}{/}
\newcommand{\Sep}{\operatorname{Sep}}
\newtheorem{tw}{Theorem}[section]
\newtheorem{lm}[tw]{Lemma}
\newtheorem{prop}[tw]{Proposition}
\newtheorem{conj}[tw]{Conjecture}
\newtheorem{question}[tw]{Question}
\theoremstyle{definition}
\theoremstyle{remark}
\newcommand{\setof}[2]{\{#1\colon #2\}}
\newcommand{\sett}[2]{\{#1\}_{#2}}
\newcommand{\sn}[1]{\{#1\}} % singleton
\newcommand{\dn}[2]{\{#1,#2\}} % doubleton
\newcommand{\map}[3]{#1\colon #2 \to #3} % A function
\newcommand{\img}[2]{#1[#2]} % image of a set
\newcommand{\inv}[2]{{#1}^{-1}[#2]} % preimage of a set
\providecommand{\cal}{\mathcal}
\renewcommand{\Bbb}{\mathbb}
\newcommand{\suppt}{\operatorname{suppt}}
\providecommand{\nat}{\omega}
\newcommand{\im}{\operatorname{im}}
\newcommand{\bal}{\operatorname{B}}
\newcommand{\clbal}{\overline{\bal}}
\newcommand{\norm}[1]{\|#1\|}
\newcommand{\uball}[1]{\clbal_{#1}}
\newcommand{\ubal}{\uball}
\newcommand{\re}{\operatorname{Re}}
\newcommand{\weakstar}{\ensuremath{{weak}^*}}
\newcommand{\Sp}{\operatorname{span}}
\newcommand{\cmp}{\circ} % composition!!!
\title{Complementation in spaces of continuous functions on compact lines}
\author{
Ond\v rej F.K. Kalenda and
Wies{\l}aw Kubi\'s
}
\address{(O.F.K. Kalenda) Department of Mathematical Analysis \\Faculty of Mathematics and Physic\\ Charles University\\Sokolovsk\'{a} 83, 186~75\\Praha 8, Czech Republic}
\email{kalenda@karlin.mff.cuni.cz} 
\thanks{The first author was supported in part by the grant GAAV IAA 100 190 901  and in part by the Research Project MSM~0021620839 from the Czech Ministry of Education.}
\address{(W. Kubi\'s) Institute of Mathematics of the Academy of Sciences of the Czech Republic, \v Zitn\'a 25, 115 67 Praha 1, Czech Republic
\and
Institute of Mathematics, Jan Kochanowski University, \'Swietokrzyska 15, 25-406 Kielce, Poland}
\thanks{The second author was supported in part by the Grant IAA 100 190 901 and by the Institutional Research Plan of the Academy of Sciences of Czech Republic No. AVOZ 101 905 03.}
\email{kubis@math.cas.cz}
\begin{document}

\begin{abstract}
We characterize order preserving continuous surjections between compact linearly ordered spaces which admit an averaging operator, together with estimates of the norm of such an operator. This result is used to the study of strengthenings of the separable complementation property in spaces of continuous functions on compact lines. These properties include in particular continuous separable complementation property and existence of a projectional skeleton.
\end{abstract}
\keywords{Compact linearly ordered space, averaging operator, continuous separable complementation property, projectional skeleton}

\subjclass[2010]{54F05, 46B26, 46E15}

\maketitle

% Temporary:
%\tableofcontents

\section{Introduction: separable complementation properties and semilattice of separable subspaces}

A {\em compact line} is, by definition, a linearly ordered compact space.
We study separable complementation property and its strengthenings in Banach spaces of continuous functions on compact lines.
Below we discuss necessary notions and basic facts related to complementation in non-separable Banach spaces.

By $\ubal E$ we denote the closed unit ball of a Banach space $E$. 
If $E=F^*$ then we always consider $\ubal E$ with the \weakstar~topology. 
A {\em projection} is, by definition, a bounded linear operator $\map PEE$ such that $P P = P$. The space $\im P=\setof{Px}{x\in E}$ is then called {\em complemented} in $E$. Note that $x\in \im P$ if and only if $Px=x$.
Let $E,F$ be subspaces of spaces of type $C(K)$, both containing the constant functions. Recall that $\map TEF$ is {\em regular} if $T$ is linear, $T1=1$ and $Tf\goe0$ whenever $f\goe0$.
Note that regular operators necessarily have norm one.

Recall that a Banach space $E$ has the {\em separable complementation property} ({\em SCP} for short) if for every countable set $S\subs E$ there exists a projection $P$ on $E$ such that $S\subs\im P$ and $\im P$ is separable.
A Banach space $E$ has the {\em controlled separable complementation property} if for every countable sets $S,T$ such that $S\subs E$ and $T\subs E^*$ there exists a projection $P$ on $E$ such that $\im P$ is separable, $S\subs \im P$ and $T\subs\im P^*$. If projections appearing in the above definitions have norm $\loe k$ then we shall say that $E$ has the $k$-SCP or controlled $k$-SCP respectively. Similarly, if the projections are regular, we shall say that $E$ has the {\em regular} (controlled) SCP.
For a survey of separable complementation and related properties we refer to \cite{PliYos}.

We will consider also some other strengthenings of the SCP. To see that they are natural we first reformulate the definition of SCP using the semilattice of separable subspaces. For a Banach space $E$ consider the family $\Sep(E)$ of all closed separable linear subspaces. If we consider this family ordered by inclusion, it becomes a $\sigma$-complete semilattice. This means that  any sequence $(S_n)$ of elements of $\Sep(E)$ has a supremum $S\in\Sep(E)$.
Indeed, we can (and have to) take $S=\overline{\Sp \bigcup_n S_n}$. 

Remark now that a Banach space $E$ has the SCP if and only if complemented separable subspaces form a cofinal subset of $\Sep(E)$. We say that $E$ has the {\em continuous SCP} if there is a closed cofinal subset  $\Ef\subs\Sep(E)$ formed by complemented subspaces. The words {\em closed cofinal} mean that:
\begin{itemize}
	\item For each $S\in\Sep(E)$ there is $F\in\Ef$ with $F\sups S$.
	\item Whenever $F_1\subs F_2\subs\dots$ are from $\Ef$, their supremum in $\Sep(E)$ belongs to $\Ef$.
\end{itemize}

If all elements of $\Ef$ are $k$-complemented for some $k\ge1$, we say that $E$ has the continuous $k$-SCP.

A further strengtening of the continuous SCP is the notion of a {\em projectional skeleton}, which is a closed cofinal subset $\Ef\subs\Sep(E)$ together with compatible projections, i.e., for each $S\in \Ef$ we have given a bounded linear projection $P_S:X\to S$ such that $P_S\cmp P_T=P_S$ whenever $S\subs T$.
If all the projections $P_S$ have norm bounded by $k$, we talk about a $k$-projectional skeleton.

The notion of a projectional skeleton was introduced and studied in \cite{skeletons}. The original definition is slightly different, but it follows from \cite[proof of Theorem 4.7]{skeletons} that our definition is equivalent to the original one.

The advantage of the continuous SCP in comparison with the standard SCP is that the closed cofinal subset of $\Sep(E)$ is in a sense uniquely determined. More precisely, we have the following easy lemma:

\begin{lm}\label{spectral}
 Let $E$ be a (nonseparable) Banach space and for each $n\in\Nat$ let $\Ef_n$ be a closed cofinal subset of $\Sep(E)$. Then the intersection $\bigcap_{n\in\Nat}\Ef_n$ is again a closed cofinal subset of $\Sep(E)$.
\end{lm}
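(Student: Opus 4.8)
I want to show that $\Ef\defi\bigcap_{n\in\Nat}\Ef_n$ is both \emph{closed} and \emph{cofinal} in $\Sep(E)$. The closedness is the easy half and is essentially formal. Suppose $F_1\subs F_2\subs\dots$ are elements of $\Ef$, with supremum $F=\overline{\Sp\bigcup_k F_k}$ in $\Sep(E)$. For each fixed $n$, the whole chain lies in $\Ef_n$, so by closedness of $\Ef_n$ its supremum $F$ belongs to $\Ef_n$. Since this holds for every $n$, we get $F\in\bigcap_n\Ef_n=\Ef$, as required.

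\textbf{Cofinality.} This is where the work lies, and the natural tool is a back-and-forth (interleaving) construction. Fix $S\in\Sep(E)$; I must produce $F\in\Ef$ with $F\sups S$. The idea is to build a single increasing chain $S=S_0\subs S_1\subs S_2\subs\cdots$ in $\Sep(E)$ which simultaneously ``uses up'' cofinality in every $\Ef_n$. The obstacle is that a fixed $\Ef_n$ need only provide an \emph{upper bound}, not an intermediate member at each stage, so I cannot simply alternate between two families; with countably many families I must dovetail. Concretely, I fix a bookkeeping surjection $\map{e}{\Nat}{\Nat}$ in which each value is attained infinitely often (e.g.\ via a pairing of $\Nat\times\Nat$ with $\Nat$), and at step $m$ I use cofinality of $\Ef_{e(m)}$ to choose $S_{m+1}\in\Ef_{e(m)}$ with $S_{m+1}\sups S_m$. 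Let $F=\overline{\Sp\bigcup_m S_m}$ be the supremum of this chain in $\Sep(E)$; then $F\sups S_0=S$, so it remains to check $F\in\Ef_n$ for each $n$.

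\textbf{Verifying $F\in\Ef_n$.} Fix $n$. Because $e$ attains the value $n$ infinitely often, there is a strictly increasing sequence $m_0<m_1<m_2<\cdots$ of indices with $e(m_j)=n$ for all $j$. By construction $S_{m_j+1}\in\Ef_n$, and these sets form an increasing sequence (being a subsequence of the chain $(S_m)$). Their supremum in $\Sep(E)$ is
\[
\overline{\Sp\bigcup_j S_{m_j+1}}=\overline{\Sp\bigcup_m S_m}=F,
\]
the equality holding because the sets $S_{m_j+1}$ are cofinal in the chain $(S_m)$, so their union is dense in $\bigcup_m S_m$ and hence has the same closed span. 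By closedness of $\Ef_n$ applied to the increasing sequence $(S_{m_j+1})_j\subs\Ef_n$, we conclude $F\in\Ef_n$. As $n$ was arbitrary, $F\in\bigcap_n\Ef_n=\Ef$, finishing cofinality and the proof.

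The only genuinely delicate point is the bookkeeping: one must ensure each index $n$ recurs infinitely often so that the relevant subsequence is itself cofinal in the master chain; everything else is a routine use of $\sigma$-completeness of $\Sep(E)$ together with the hypothesized closedness of each $\Ef_n$.
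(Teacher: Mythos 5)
Your proof is correct. The paper states this lemma without proof (calling it easy), and your argument --- closedness of the intersection being formal, and cofinality obtained by dovetailing through the families via a bookkeeping function so that each $\Ef_n$ recurs along a cofinal subsequence of the master chain --- is precisely the standard argument the authors intend, with the one delicate point (that the subsequence lying in $\Ef_n$ has the same supremum $F$) correctly identified and justified.
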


In particular, if there is a closed cofinal subset of $\Sep(E)$ formed by non-complemented subspaces, then $E$ does not have the continuous SCP.
Further, if we want to decide whether a given space has the continuous SCP it is enough to test complementability of spaces from a given closed cofinal subset of $\Sep(E)$. This is important for example in the case of $C(K)$ spaces.

It will be helpful to introduce the following terminology. Let $E$ be a (nonseparable) Banach space and $\Pee$ a property. We will say that {\em typical separable subspace of $E$ has property $\Pee$} if there is a closed cofinal subset $\Ef\subs\Sep(E)$ such that each element of $\Ef$ has property $\Pee$.

Let $E=C(K)$ where $K$ is a compact space. Suppose that $q:K\to L$ is a continuous surjection of $K$ onto a compact space $L$. Denote by $q^*$ the mapping from $C(L)$ into $C(K)$ defined by $q^*f=f\cmp q$. Then $q^*$ is an isometry and $q^*C(L)$ is a closed subspace of $C(K)$. Moreover, it is separable if and only if $L$ is metrizable. 

\begin{lm}\label{typical_is_quotient} Let $K$ be a compact space. Then typical separable subspace  of $C(K)$ is of the form $q^*C(L)$, where $L$ is a metrizable compact space and $q:K\to L$ a continuous surjection.

If $K$ is, moreover, a compact line, then typical separable subspace  of $C(K)$ is of the form $q^*C(L)$, where $L$ is a metrizable compact line and $q:K\to L$ a continuous order preserving surjection. 
\end{lm}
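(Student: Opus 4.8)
\emph{Plan.} The strategy is to recast the condition ``$E=q^*C(L)$'' purely algebraically and then verify the two closed--cofinality requirements for the resulting family. Recall that, given a closed subspace $A\subseteq C(K)$ containing the constants and closed under multiplication (a closed unital subalgebra), one defines $x\sim_A y\iff(\forall f\in A)\,f(x)=f(y)$, sets $L=K/{\sim_A}$ with the quotient topology, and lets $q\colon K\to L$ be the quotient map. Then $L$ is compact, $q$ is a continuous surjection, and the Stone--Weierstrass theorem yields $A=q^*C(L)$; conversely every $q^*C(L)$ (for $q$ a continuous surjection onto a compact $L$) is such a subalgebra. Since $q^*$ is an isometry, $A$ is separable exactly when $C(L)$ is, that is, as noted above, exactly when $L$ is metrizable. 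Thus the subspaces of the form $q^*C(L)$ with $L$ metrizable are precisely the \emph{separable closed unital subalgebras} of $C(K)$, and I let $\Ef$ denote this family.

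For the first assertion it then remains to check that $\Ef$ is closed and cofinal in $\Sep(C(K))$. For cofinality, given $S\in\Sep(C(K))$ with countable dense set $D$, the unital subalgebra generated by $D$ has a countable dense subset (rational polynomials in finitely many elements of $D$), so its closure $A$ is a separable closed unital subalgebra with $A\supseteq S$. For closedness, if $A_1\subseteq A_2\subseteq\cdots$ lie in $\Ef$, their supremum in $\Sep(C(K))$ is $\Cl{\bigcup_n A_n}$ (the span is superfluous, the $A_n$ being nested subspaces); a nested union of unital subalgebras is again one, hence so is its closure, and it is separable as a closure of a countable union of separable sets. So $\Cl{\bigcup_n A_n}\in\Ef$.

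For the second assertion, with $K$ a compact line, I pass to the subfamily $\Ef'\subseteq\Ef$ of those separable closed unital subalgebras $A$ for which every class of $\sim_A$ is order--convex. For such $A$ the classes are pairwise disjoint intervals covering $K$, so $L=K/{\sim_A}$ carries a natural linear order making $q$ order preserving, and one checks that its order topology coincides with the quotient topology; thus $L$ is a metrizable compact line and $A=q^*C(L)$ has the required form. Closedness of $\Ef'$ is inherited from that of $\Ef$ together with the observation that $\sim_{\,\Cl{\bigcup_n A_n}}=\bigcap_n\sim_{A_n}$, whose classes are decreasing intersections of convex sets, hence convex.

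The crux is cofinality of $\Ef'$. Here I invoke the classical fact that the increasing continuous real functions separate the points of a compact line (an order-preserving version of Urysohn's lemma; elementary at jumps via clopen initial segments). Together with the constants they generate, by Stone--Weierstrass, a subalgebra dense in $C(K)$, so every member of a countable dense set $D\subseteq S$ is a uniform limit of polynomials in finitely many increasing continuous functions. Collecting all increasing functions occurring in these approximations into one countable family $\Gee$, the closed unital subalgebra $A$ generated by $\Gee$ is separable and satisfies $A\supseteq S$. Finally, functions of $A$ agree on $x,y$ precisely when all members of $\Gee$ do, so each $\sim_A$--class equals $\bigcap_{g\in\Gee} g^{-1}(g(x))$; each level set $g^{-1}(c)$ of an increasing function is an interval, whence this intersection is order--convex and $A\in\Ef'$. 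The main obstacle is exactly this step: a single monotone continuous function cannot separate the points of a non-metrizable line, so one must use countably many increasing generators at once, the point being that the interval structure of their level sets guarantees that order--convexity of the fibers survives the passage to the generated algebra.
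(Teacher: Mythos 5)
Your argument is correct and follows essentially the same route as the paper: identify the subspaces $q^*C(L)$ with the separable closed unital (and, in the complex case, conjugation-closed) subalgebras via Stone--Weierstrass, and in the ordered case generate the algebra by countably many increasing functions, so that each fiber is an intersection of level sets of increasing continuous functions and hence a closed interval. The only cosmetic difference is that the paper quotes the density of the \emph{linear span} of the increasing continuous functions in $C(K)$, whereas you derive density of the generated algebra from point separation plus Stone--Weierstrass; both rest on the same order-preserving Urysohn lemma.
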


\begin{proof} It follows from the Stone-Weierstrass theorem that $S\in\Sep(C(K))$ is of the form $q^*C(L)$ if and only if $S$ is a closed algebra containing constant functions (and closed to taking complex conjugate in the complex case).
It is clear that these spaces form a closed cofinal subset.

The second part will be proved in the beginning of Section~\ref{typq}.
\end{proof}

It is well-known and easy to show that whenever a Banach space $E$ has the SCP, it has the $k$-SCP for some $k\ge 1$. An analogous statement holds also for projectional skeletons, see \cite[Proposition 4.1]{skeletons}.

We shall present (Theorem~\ref{tunboundedscp} below) an example of a Banach space with ``unbounded" continuous SCP. Namely, it is a $C(K)$ space, where $K$ is a certain compact line; the space $C(K)$ has the continuous SCP, yet it has no continuous $k$-SCP for any $k\in\nat$.
 
It is clear that the existence of a $k$-projectional skeleton implies continuous $k$-SCP which implies $k$-SCP. It is worth to notice that there is a partial converse proved in \cite[Lemma 6.1]{K2006}:

\begin{prop} Let $E$ be a Banach space of density $\aleph_1$ which has continuous $1$-SCP. Then $E$ has $1$-projectional skeleton.
\end{prop}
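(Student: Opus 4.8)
The plan is to realize the skeleton as a \emph{$1$-projectional resolution of the identity}. Concretely, I would build a strictly increasing, continuous chain $(F_\alpha)_{\alpha<\omega_1}$ of members of $\Ef$, cofinal in $\Sep(E)$, together with norm-one projections $P_\alpha\colon E\to F_\alpha$ satisfying $P_\alpha\cmp P_\beta=P_\alpha$ whenever $\alpha\loe\beta$; here $\Ef$ is the closed cofinal family of $1$-complemented separable subspaces furnished by the continuous $1$-SCP. Once this is done, $\setof{F_\alpha}{\alpha<\omega_1}$ is closed and cofinal in $\Sep(E)$: cofinality holds because any separable $S$ has a countable dense set whose members are approximated from the dense union $\bigcup_\alpha F_\alpha$, and those countably many approximants all lie in a single $F_\beta$ (a countable supremum of indices), whence $S\subs F_\beta$; closedness holds because a countable increasing subchain has supremum $F_{\sup_k\alpha_k}$ by continuity. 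Assigning to each $F_\alpha$ the projection $P_\alpha$ then gives a $1$-projectional skeleton, since $F_\alpha\subs F_\beta$ forces $\alpha\loe\beta$ (the chain being strictly increasing) and hence $P_\alpha\cmp P_\beta=P_\alpha$.

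I would fix a dense set $\setof{d_\gamma}{\gamma<\omega_1}$ and construct $(F_\alpha,P_\alpha)$ by transfinite recursion, forcing $d_\alpha\in F_{\alpha+1}$ to guarantee cofinality. The delicate point is the successor step: given a norm-one projection $P_\alpha$ onto $F_\alpha\in\Ef$, I must find $F_{\alpha+1}\in\Ef$ containing $F_\alpha\cup\set{d_\alpha}$ and a norm-one projection $P_{\alpha+1}$ onto it with $\ker P_{\alpha+1}\subs\ker P_\alpha$, which is exactly $P_\alpha\cmp P_{\alpha+1}=P_\alpha$. Existence of \emph{some} norm-one projection onto a member of $\Ef$ above $F_\alpha\cup\set{d_\alpha}$ is immediate from $1$-complementation and cofinality, but controlling its kernel is not: writing $E=F_\alpha\oplus\ker P_\alpha$, any compatible projection onto $G=F_\alpha\oplus(G\cap\ker P_\alpha)$ has the form $P_\alpha\oplus\pi$ for a projection $\pi$ of $\ker P_\alpha$ onto $G\cap\ker P_\alpha$, and this amalgam need not have norm one even when $\pi$ does. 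The two obvious gluings — $P_\alpha+Q-Q\cmp P_\alpha$, which lands in $G$ but need not be compatible, and $P_\alpha+(I-P_\alpha)\cmp Q$, which is compatible but neither idempotent nor of range $G$ — both fail.

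This norm control is the heart of the matter and is where I expect the real difficulty. The approach I would take is an iterated construction inside $\Ef$: build an increasing sequence $F_\alpha=G_0\subs G_1\subs\cdots$ in $\Ef$ with $d_\alpha\in G_1$, equipped with norm-one projections $R_n$ onto $G_n$ (available by $1$-complementation), where at each step one uses $P_\alpha$ and the previously chosen $R_m$ to select $G_{n+1}$ and $R_{n+1}$ so that the $R_n$ become strongly Cauchy and the compatibility defect, measured by $P_\alpha\cmp R_n-P_\alpha$ on a fixed countable dense set, tends to $0$. Closedness of $\Ef$ — the feature distinguishing continuous $1$-SCP from plain $1$-SCP — then returns $F_{\alpha+1}:=\overline{\bigcup_n G_n}$ to $\Ef$, and the strong limit $P_{\alpha+1}:=\lim_n R_n$ is a norm-one projection onto $F_{\alpha+1}$ (norm one because every $R_n$ is and the limit is filled by the increasing union) satisfying $P_\alpha\cmp P_{\alpha+1}=P_\alpha$, since $R_n\to P_{\alpha+1}$ and $P_\alpha\cmp R_n\to P_\alpha$ strongly. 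Arranging the iteration so that strong convergence and the vanishing of the compatibility defect hold simultaneously is the main obstacle, and it is precisely here that the closedness of the cofinal family is indispensable, as it lets the norm-one property survive the passage to the limit.

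At a countable limit $\lambda$, closedness of $\Ef$ gives $F_\lambda:=\overline{\bigcup_{\alpha<\lambda}F_\alpha}\in\Ef$, and I would set $P_\lambda x:=\lim_{\alpha\to\lambda}P_\alpha x$. The net converges because, as usual for a projectional resolution, $P_\beta x-P_\alpha x\in\ker P_\alpha$ for $\alpha\loe\beta<\lambda$, and the successor construction is run so as to make these increments vanish in norm (tracking the same countable dense set); this limit continuity, like the successor step, is a cost of the construction and is exactly what the bookkeeping over the dense set secures. The limit $P_\lambda$ is then a norm-one projection onto $F_\lambda$ with $P_\alpha\cmp P_\lambda=P_\alpha$ for all $\alpha<\lambda$, so the recursion continues. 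Since $E$ has density $\aleph_1$, the chain $(F_\alpha)_{\alpha<\omega_1}$ is cofinal and no top stage is needed, and by the first paragraph the chain together with the projections $P_\alpha$ is the desired $1$-projectional skeleton.
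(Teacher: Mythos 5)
The paper does not prove this proposition itself: it quotes it from \cite[Lemma 6.1]{K2006}, so the comparison is necessarily with that argument. Your overall architecture --- a continuous increasing cofinal chain $(F_\alpha)_{\alpha<\omega_1}$ in $\Ef$ carrying compatible norm-one projections, i.e.\ a $1$-PRI, which is then read off as a $1$-projectional skeleton --- is the right one and matches the cited proof. The difficulty is that the two steps you yourself single out as ``the heart of the matter'' are never actually carried out, and they constitute the entire mathematical content of the lemma. In the successor step you propose to choose $G_{n+1}\in\Ef$ and a norm-one projection $R_{n+1}$ onto it ``so that the $R_n$ become strongly Cauchy and the compatibility defect $P_\alpha\cmp R_n-P_\alpha$ tends to $0$,'' but no reason is given why such a choice exists: the hypothesis only guarantees that the set of norm-one projections onto a given member of $\Ef$ is nonempty, and nothing forces it to contain an element close to $R_n$ on a prescribed dense set or nearly compatible with $P_\alpha$. (One cannot extract such projections by a soft compactness argument either: bounded subsets of the separable range space are not weakly compact, and weak$^*$ cluster points of the $R_n$ land in $G^{**}$ rather than in $G$.) The limit step has the same status: pointwise convergence of a compatible chain of norm-one projections is \emph{not} automatic --- consider in $c$ the coordinate projections $P_n$ onto the first $n$ coordinates and the vector $(1,1,1,\dots)$ --- so defining $P_\lambda x=\lim_{\alpha\to\lambda}P_\alpha x$ presupposes exactly the bookkeeping that is never specified. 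As written, the proposal is a correct reduction of the proposition to an unproved extension lemma, not a proof of it.

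A smaller but concrete error: the operator $R=P_\alpha+(I-P_\alpha)\cmp Q$ that you dismiss as ``neither idempotent nor of range $G$'' is in fact an idempotent with range exactly $G$ (it is the identity on $G$ and maps $E$ into $G$, since $P_\alpha x$ and $(I-P_\alpha)Qx$ both lie in $G$), and it satisfies $P_\alpha\cmp R=R\cmp P_\alpha=P_\alpha$; its only defect is that its norm is bounded by $3$ rather than by $1$. This observation already yields a compatible extension with constant $3$ at every step; the genuine content of the proposition is precisely to bring the constant down to $1$, which is what \cite[Lemma 6.1]{K2006} achieves and what your sketch leaves open.
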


It is natural to ask the following question:

\begin{question} Let $E$ be a Banach space with the continuous $k$-SCP. Does it have a projectional skeleton?
\end{question}

We finish by a short discussion of the controlled SCP. The following simple fact has already been noticed by several authors (see e.g. \cite[Cor. 1]{Ferrer} for the case of $C(K)$ spaces).

\begin{prop}\label{owenpqwjr}
Assume $E$ has the controlled SCP. Then the dual unit ball of $E$ is $\aleph_0$-monolithic.
\end{prop}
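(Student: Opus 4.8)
The plan is to unwind the definition of $\aleph_0$-monolithicity for the compact space $(\ubal{E^*},\weakstar)$: it amounts to showing that the $\weakstar$-closure of every countable subset of $\ubal{E^*}$ is metrizable (for a compact space, countable network weight is the same as metrizability). So I would fix a countable set $A=\setof{f_n}{\ntr}\subs\ubal{E^*}$ and aim to prove that $\Cl A$ (the $\weakstar$-closure taken in $E^*$) is $\weakstar$-metrizable. The single tool available is the controlled SCP, which I would apply to the pair $S=\emptyset$ and $T=A$: this yields a projection $P$ on $E$ whose range $Y=\im P$ is separable and for which $A\subs\im P^*$. The entire argument then reduces to understanding the $\weakstar$-topology on $\im P^*$, and my main claim is that $(\im P^*,\weakstar)$ is homeomorphic to $(Y^*,\weakstar)$.

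First I would check that $\im P^*$ is $\weakstar$-closed in $E^*$. Since adjoint operators are $\weakstar$-to-$\weakstar$ continuous, $P^*$ is such; and $\im P^*=\setof{f\in E^*}{P^*f=f}$ is precisely the fixed-point set of $P^*$, i.e. the kernel of the $\weakstar$-continuous operator $P^*-I$, hence $\weakstar$-closed. Consequently $\Cl A\subs\im P^*$, because $A\subs\im P^*$ and the latter is $\weakstar$-closed.

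Next I would establish the identification. The restriction map $R\colon\im P^*\to Y^*$, $R(f)=f\rest Y$, is $\weakstar$-to-$\weakstar$ continuous and of norm $\loe1$, so it carries $\ubal{E^*}\cap\im P^*$ into $\ubal{Y^*}$. On $\im P^*$ it is a bijection whose inverse is $g\mapsto g\cmp P$: indeed $f=f\cmp P$ for $f\in\im P^*$, while $(g\cmp P)\rest Y=g$ for $g\in Y^*$. This inverse is again $\weakstar$-to-$\weakstar$ continuous, since for each $x\in E$ the functional $g\mapsto(g\cmp P)(x)=g(Px)$ is $\weakstar$-continuous on $Y^*$ (as $Px\in Y$). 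Hence $R$ is a $\weakstar$-homeomorphism of $\im P^*$ onto $Y^*$. Finally, separability of $Y$ makes $\ubal{Y^*}$ $\weakstar$-metrizable; then $R[\Cl A]$ is a $\weakstar$-closed subset of $\ubal{Y^*}$, hence $\weakstar$-metrizable, and pulling back through the homeomorphism $R$ (which sends the closure of $A$ to the closure of $R[A]$) shows $\Cl A$ is $\weakstar$-metrizable, as desired.

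I expect the only delicate points to be the bicontinuity of $R$ — in particular verifying that the inverse $g\mapsto g\cmp P$ is $\weakstar$-continuous, which is where the concrete form of $P$ is used — and the bookkeeping ensuring we stay inside bounded sets: metrizability of $\ubal{Y^*}$ only transfers to genuinely norm-bounded subsets, which is guaranteed here since $\Cl A\subs\ubal{E^*}$ forces $R[\Cl A]\subs\ubal{Y^*}$. Everything else is routine once $\im P^*$ is recognized as a $\weakstar$-closed copy of the dual of the separable space $\im P$.
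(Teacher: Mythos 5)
Your proof is correct and follows essentially the same route as the paper's: apply the controlled SCP to place the countable set inside $\im P^*$ for a projection $P$ with separable range, observe that $\im P^*$ is \weakstar-closed and \weakstar-identifiable with the dual of the separable space $\im P$, and conclude that the relevant bounded \weakstar-closure is metrizable. You merely spell out the details (the fixed-point description of $\im P^*$ and the bicontinuity of the restriction map) that the paper leaves implicit.
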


\begin{proof}
Fix a countable $S\subs \bal(E^*)$ and, using controlled SCP, find a projection $\map PEE$ such that $\im(P)$ is separable and $S\subs\im(P^*)=:F$. Then $F$ is a \weakstar~closed linear subspace of $E^*$ such that bounded subsets are \weakstar~metrizable. Hence $\cl_*(S)\subs F\cap\bal(E^*)$ is \weakstar~metrizable.
\end{proof}

Let us remark that in the above proposition the controlled SCP cannot be replaced by SCP. Indeed, let $E=\ell_1([0,1])$. Then $E$ has $1$-SCP (even a $1$-projectional skeleton), but the dual unit ball is homemorphic to $[-1,1]^{[0,1]}$ which is not $\aleph_0$-monolithic as it is a separable nonmetrizable compact space. 	
Proposition~\ref{owenpqwjr} implies that if a $C(K)$ space has the controlled SCP, then $K$ itself must be $\aleph_0$-monolithic.
In the case of compact lines, we have a much better result:

\begin{tw}\label{twergoq}
Let $K$ be a compact line. The following properties are equivalent.
\begin{enumerate} 
	\item[(a)] $C(K)$ has the regular controlled SCP.
	\item[(b)] $C(K)$ has the controlled SCP.
	\item[(c)] $C(K)$ has the SCP.
	\item[(d)] $K$ is $\aleph_0$-monolithic.
\end{enumerate}
\end{tw}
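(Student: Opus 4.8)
The plan is to establish the cycle (a)$\Rightarrow$(b)$\Rightarrow$(c)$\Rightarrow$(d)$\Rightarrow$(a). The implications (a)$\Rightarrow$(b)$\Rightarrow$(c) are immediate from the definitions: a regular projection is a projection, and forgetting the dual set $T$ turns the controlled SCP into the SCP. The two substantial implications (c)$\Rightarrow$(d) and (d)$\Rightarrow$(a) both reduce, via Lemma~\ref{typical_is_quotient}, to the study of averaging operators. Indeed, since $q^*$ is an isometric embedding, the separable subalgebra $q^*C(L)$ is complemented in $C(K)$ exactly when the order preserving surjection $q\colon K\to L$ admits an averaging operator $u\colon C(K)\to C(L)$, that is, a bounded operator with $u\cmp q^*=\id{C(L)}$; the corresponding projection is $P=q^*\cmp u$, and $P$ is regular precisely when $u$ is, equivalently when $u(f)(t)=\int f\,d\nu_t$ for a \weakstar-continuous assignment $t\mapsto\nu_t$ of probability measures carried by the fibres $q^{-1}(t)=[a_t,b_t]$.

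The linchpin is a structural dichotomy for metrizable quotients of $K$, governed by monolithicity. Let $q\colon K\to L$ be an order preserving surjection onto a metrizable compact line and let $N=\{t\in L:a_t<b_t\}$ index the nondegenerate fibres. I claim $K$ is $\aleph_0$-monolithic if and only if $N$ is countable for every such $q$. The key computation is that, fixing a countable dense $E\subseteq L$ and putting $D=\{a_t,b_t:t\in E\}$, continuity of $q$ forces $a_s=\lim_{t\uparrow s}b_t$ and $b_s=\lim_{t\downarrow s}a_t$ to lie in $\overline D$ for every $s$, and that for $s\in N$ the pair $(a_s,b_s)$ is a jump of $\overline D$; hence if $N$ were uncountable, $\overline D$ would be a separable but non-metrizable subline, contradicting monolithicity. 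Conversely, a countable set with non-metrizable closure, being a separable non-second-countable line, has uncountably many jumps, and these collapse to uncountably many nondegenerate fibres of a suitable metrizable quotient.

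For (d)$\Rightarrow$(a) I would fix countable $S\subseteq C(K)$ and $T\subseteq C(K)^*$, enlarge $S$ by functions coding the separable supports of the measures in $T$, and apply Lemma~\ref{typical_is_quotient} to obtain an order preserving surjection $q\colon K\to L$ onto a metrizable line with $S\subseteq q^*C(L)$ and each $\mu\in T$ carried by $q$. By the dichotomy only countably many fibres of $q$ are nondegenerate; I would then refine $q$ so that every nondegenerate fibre becomes isolated from one side (say the left), which is where monolithicity and the hereditary structure of sublines re-enter. On such a quotient the prescription $\nu_t=\delta_{a_t}$ is \weakstar-continuous, yielding a regular averaging operator $u$ and a regular projection $P=q^*\cmp u$ with $S\subseteq\im P$; a final adjustment of $D$ guarantees that each $\mu\in T$ is $P$-invariant, i.e. $T\subseteq\im P^*$. (Consistency with Proposition~\ref{owenpqwjr} is automatic, since the dual ball of the resulting $C(K)$ is then $\aleph_0$-monolithic.)

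For (c)$\Rightarrow$(d) I would argue by contraposition. If $K$ is not $\aleph_0$-monolithic, pick a countable $D$ with $\overline D$ non-metrizable and a countable $S_0\subseteq C(K)$ separating $D$; then any separable subalgebra $q^*C(L)\supseteq S_0$ has $L$ metrizable and $q$ separating $D$, so $q|_{\overline D}$ cannot be injective and, since $L$ has only countably many jumps, $q$ collapses uncountably many jumps of $\overline D$, forcing $N$ uncountable. The main obstacle, and the precise point where the paper's general characterization of averaging operators (with its norm estimates) is indispensable, is twofold: on the one hand, converting ``$N$ uncountable'' into the genuine nonexistence of a \emph{bounded} averaging operator, and thereby into the failure of complementation of every separable superspace of $S_0$, so that complemented separable subspaces are not cofinal and the SCP fails; on the other hand, executing the one-sided refinement in (d)$\Rightarrow$(a) so that the \weakstar-continuity of $t\mapsto\nu_t$ holds exactly while the dual constraint $T\subseteq\im P^*$ is simultaneously met.
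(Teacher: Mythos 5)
Your reduction of both substantial implications to averaging operators over metrizable quotients is a genuinely different route from the paper's, and it has two gaps that are more serious than the ``obstacles'' you flag. For (c)$\Rightarrow$(d): showing that every separable \emph{subalgebra} $q^*C(L)\supseteq S_0$ is uncomplemented only refutes the \emph{continuous} SCP, because the subalgebras form a closed cofinal family in $\Sep(C(K))$; the SCP merely demands \emph{some} complemented separable subspace containing $S_0$, and such a subspace need not be an algebra (complementation does not pass down from $q^*C(L)$ to its subspaces). This is exactly why the paper abandons averaging operators here and instead uses Lemma~\ref{ldblarrownonmon} to locate a non-metrizable double arrow $L\subs K$, embeds $C(L)$ in $C(K)$ via the extension operator of Lemma~\ref{line-extension}, and invokes the WCG dichotomy of Lemma~\ref{lnonSCPviafakeWCG}: since $C(L)\by F$ is WCG for the canonical copy $F$ of $C([0,1])$ while $C(L)$ embeds in no WCG space, \emph{no} Banach space containing $C(L)$ has a complemented separable subspace containing $F$. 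That lemma is the missing mechanism that converts your ``$N$ uncountable'' into failure of the full SCP rather than of its continuous version.

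For (d)$\Rightarrow$(a) the problem is worse: by Lemma~\ref{lines-averaging}(1), a \emph{regular} averaging operator for $q$ exists iff $Q(q)=\emptyset$, i.e.\ every nondegenerate fibre lies over an external point of $L$, and your ``one-sided refinement'' is simply impossible for some $\aleph_0$-monolithic lines. Take $K$ to be the double-sided long line (a closed long ray followed by its reverse, glued at the common endpoint $x$ of uncountable character from both sides): $K$ is connected and $\aleph_0$-monolithic, every metrizable quotient collapses a two-sided neighbourhood of $x$, and since $L$ inherits connectedness it has no jumps, so $q(x)$ is internal in $L$ and $Q(q)\ne\emptyset$ for \emph{every} nontrivial $q$. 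Hence no choice of $\nu_t=\delta_{a_t}$ is \weakstar-continuous and no regular averaging operator exists; at best you get norm-$3$ projections, which proves (d)$\Rightarrow$(b) for this example but not (d)$\Rightarrow$(a). Worse, there are $\aleph_0$-monolithic (scattered) lines with $\io(M(K))=\infty$, for which Proposition~\ref{typical-io} and Lemma~\ref{lines-averaging} show that \emph{typical} quotients admit no bounded averaging operator at all, so even the non-regular version of your plan needs an argument you do not have. The paper's proof (Lemma~\ref{wraasjJFWRQ}) sidesteps quotients entirely: it collects the relevant points of the functions in $S$ and the (separable, by Lemma~\ref{lmsuslinmeasureAa}) supports of the measures in $T$ into one closed separable, hence metrizable, subset $X\subs K$, and takes $P=TR$ where $R$ is restriction to $X$ and $T$ is the regular extension operator of Lemma~\ref{line-extension}; the image $T(C(X))$ is a complemented copy of $C(X)$ that is not of the form $q^*C(L)$, which is precisely why monolithicity suffices with no hypothesis on $M(K)$.
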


The above result will be proved in Section~\ref{scteaaofcl}.
Note that the implication (b)$\implies$(d) is valid for arbitrary compacta (see \cite[Cor. 1]{Ferrer}).

We remark that the following general question on a possible full converse of Propositon~\ref{owenpqwjr} seems to be open.

\begin{question} Let $E$ be a Banach space such that $\overline B_{E^*}$ is $\aleph_0$-monolithic. Does $E$ have the SCP? How about controlled SCP?
\end{question}

\section{Extension and averaging operators for compact lines}\label{scteaaofcl}

In this section we study extension and averaging operators for compact lines. The results will be first used for proving Theorem~\ref{twergoq} and then applied in the next section to the study of (controlled) separable complementation properties for $C(K)$ spaces.
We shall need the following classical fact on measures on compact lines. For completeness, we give an elementary proof.

\begin{lm}\label{lmsuslinmeasureAa}
Let $K$ be a compact line and assume that there exists a strictly positive Borel measure $\mu$ on $K$, i.e. $\mu(J)>0$ whenever $J$ is an interval with nonempty interior in $K$. Then $K$ is separable.
\end{lm}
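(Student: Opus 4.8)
The plan is to compress $\mu$ into a single nondecreasing real function and then pull a countable dense set back from its image. Assume $\mu$ is finite and put $M=\mu(K)$. I would define $\map f K{[0,M]}$ by $f(x)=\mu(\setof{y\in K}{y\loe x})$, which is clearly nondecreasing. The one fact that makes everything run is that strict positivity forces $f(a)<f(b)$ whenever $(a,b)\nnempty$: if $a<c<b$ then $(a,b)$ is an interval with nonempty interior, so $f(b)-f(a)=\mu(\setof{y}{a<y\loe b})\goe\mu((a,b))>0$. Since $f$ is monotone, this also means that any value $q$ with $f(a)<q<f(b)$ can be attained only at points lying strictly between $a$ and $b$.

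Before the main step I would record two auxiliary facts. First, because $\mu$ is finite its set $A$ of atoms is countable (for each $n$ at most $nM$ points can carry mass $>1/n$); moreover every isolated point $x$ is an atom, since then $\set x$ is an interval with nonempty interior and so $\mu(\set x)>0$. Second, each fibre $f^{-1}(v)$ has at most two points: if $x<y<z$ all had the same $f$-value then $\mu(\setof{y'}{x<y'<z})=0$, yet $y$ witnesses that this interval is nonempty, contradicting strict positivity. Hence $f^{-1}(Q)$ is countable for every countable $Q$.

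Now I would fix a countable set $Q$ dense in $f(K)\subs[0,M]$ and set $D=f^{-1}(Q)\cup A\cup\set{\min K,\max K}$, which is countable. To check density it is enough to meet every nonempty basic open set; the two half-open end rays contain $\min K$ and $\max K$, so let $(a,b)\nnempty$, whence $f(a)<f(b)$. If some $c\in(a,b)$ satisfies $f(a)<f(c)<f(b)$, then $f(K)\cap(f(a),f(b))$ is a nonempty relatively open piece of $f(K)$ and so contains some $q\in Q$; writing $q=f(x)$, monotonicity forces $a<x<b$, and $x\in f^{-1}(Q)\subs D$. Otherwise every point of $(a,b)$ maps to $f(a)$ or to $f(b)$; but $f(c)=f(a)$ forces $c$ to be the immediate successor of $a$, and $f(c)=f(b)$ forces $c$ to be the immediate predecessor of $b$, so $(a,b)$ reduces to one or two isolated points, each an atom and hence in $A$. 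Thus $D$ is dense and $K$ is separable.

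The hardest part will be the bookkeeping of the degenerate configurations — order jumps, isolated points and the two endpoints — while keeping $D$ countable; both issues are resolved precisely by the fibre bound and by the observation that an isolated point must be an atom.
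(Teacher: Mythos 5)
Your proof is correct, and it takes a genuinely different route from the paper's. You compress $\mu$ into the distribution function $f(x)=\mu(\{y\in K:y\le x\})$, observe that strict positivity forces $f$ to increase strictly across every nonempty open interval, that each fibre of $f$ has at most two points, and that isolated points are atoms, and then pull separability of $K$ back from the separability of $f(K)\subseteq[0,M]$ via the countable set $f^{-1}(Q)\cup A\cup\{\min K,\max K\}$. The paper argues combinatorially instead: for each $n$ it fixes a maximal family of pairwise disjoint intervals of measure $<1/n$ avoiding the atoms, notes that each such family is finite, and refutes the existence of a nonempty open interval missing the resulting countable set by fitting three consecutive subintervals of measure $\ge 1/n$ into it. Your route is arguably more transparent and extracts the structural reason for separability (an order-preserving, essentially at most two-to-one map of $K$ into the reals), at the cost of the degenerate-case bookkeeping you flag --- which you do handle correctly: in your second case every point of $(a,b)$ takes the value $f(a)$ or $f(b)$, each of these values is attained at most once inside $(a,b)$ (the immediate successor of $a$, respectively the immediate predecessor of $b$), so $(a,b)$ consists of at most two isolated points, which are atoms and hence lie in $D$. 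Both arguments use only finite additivity of $\mu$, so neither has an advantage on that score.
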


\begin{proof}
Let $A$ be the set of all atoms of $\mu$, that is $A = \setof{p\in K}{\mu(\sn p)>0}$.
Then $A$ is countable.
For each $n>0$ let $\Ef_n$ be a maximal family of pairwise disjoint intervals $J\subs K$ satisfying $J\cap A = \emptyset$ and $\mu(J)<1/n$.
Clearly, $\Ef_n$ is finite, since $\mu(K) < +\infty$.
Fix a countable set $D\sups A$ so that $D\cap J\nnempty$ for every $J\in\bigcup_{n>0}\Ef_n$. We claim that $D$ is dense.

Suppose otherwise and choose a nonempty open interval $U\subs K$ with $U\cap D=\emptyset$.
Then $U\cap A=\emptyset$, therefore we can find nonempty pairwise disjoint open intervals $W_0, W_1, W_2 \subs U$.
Assume $W_1$ is above $W_0$ and below $W_2$, i.e. $x_0 < x_1 < x_2$ whenever $x_i\in W_i$ for $i<3$.
Find $\ntr$ so that $\mu(W_i) \goe 1/n$ for $i<3$.
Observe that $W_1 \cap \bigcup\Ef_n\nnempty$ since otherwise we would be able to find a subinterval $G$ of $W_1$ satisfying $\mu(G)<1/n$, which could be added to the family $\Ef_n$, contradicting its maximality.
Fix $J\in \Ef_n$ such that $J\cap W_1\nnempty$.
Fix $x\in D\cap J$.
Since $D\cap U=\emptyset$, we conclude that either $x$ is below $W_0$ or above $W_2$. Thus., $W_i\subs J$ for some $i\in \dn 02$.
Hence, $\mu(J)\goe \mu(W_i) \goe 1/n$, a contradiction.
\end{proof}

The above lemma can also be proved as follows: assuming $K$ is not separable, it must be a Suslin line. On the other hand, the product measure shows that $K\times K$ satisfies the Suslin condition, i.e. every disjoint family of open sets is countable. This is a contradiction, since the square of a Suslin line fails the Suslin condition (see e.g. \cite{Kunen} for details).
On the other hand, it is worth mentioning that the above lemma holds, assuming only that $\mu$ is finitely additive.

If $K$ is a compact space and $L$ is a closed subset of $K$, then an {\em extension operator} is a bounded linear operator $T:C(L)\to C(K)$ satisfying $Tf\rest_L=f$ for each $f\in C(L)$. Such an operator need not exist in general. However, for compact lines we have the following lemma. It is proved in \cite[Lemma 4.2]{K2006}.

\begin{lm}\label{line-extension} Let $K$ be a compact line and $L\subs K$ be a closed subspace. Then there is a regular extension operator $T:C(L)\to C(K)$.
\end{lm}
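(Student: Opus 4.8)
The plan is to construct $T$ explicitly by interpolating, across each maximal gap of $K\setminus L$, between the two nearest values of $f$ on $L$, so that every value $(Tf)(x)$ is a convex combination of values of $f$; regularity will then be automatic and the whole difficulty will be concentrated in the continuity of $Tf$. First I would record that $L$, being closed in the compact line $K$, is itself a compact line and hence has a least element $\min L$ and a greatest element $\max L$. For $x\in K\setminus L$ put $a_x=\sup\setof{y\in L}{y<x}$ and $b_x=\inf\setof{y\in L}{y>x}$ whenever these sets are nonempty; since $L$ is closed, the supremum and infimum are attained and lie in $L$, and $x\notin L$ forces $a_x<x<b_x$ with $(a_x,b_x)\cap L=\emptyset$. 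The nonempty intervals $(a,b)$ with $a,b\in L$ and $(a,b)\cap L=\emptyset$ (the \emph{gaps}) are pairwise disjoint, and together with the two (possibly empty) tails $[\min K,\min L)$ and $(\max L,\max K]$ they cover $K\setminus L$.

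To define $T$, note that each closed gap $[a,b]$ is a closed interval of $K$, hence a compact (so normal) space, and by Urysohn's lemma there is a continuous $\lambda_{a,b}\colon[a,b]\to[0,1]$ with $\lambda_{a,b}(a)=1$ and $\lambda_{a,b}(b)=0$. For $f\in C(L)$ I set
\[
(Tf)(x)=\begin{cases}
f(x), & x\in L,\\
\lambda_{a_x,b_x}(x)\,f(a_x)+\bigl(1-\lambda_{a_x,b_x}(x)\bigr)f(b_x), & x\text{ in a gap,}\\
f(\min L), & x<\min L,\\
f(\max L), & x>\max L.
\end{cases}
\]
Since $a_x,b_x$ and the weights $\lambda$ do not depend on $f$, the map $T$ is linear; each value is a convex combination of values of $f$, so $T1=1$, $Tf\goe0$ whenever $f\goe0$, and $\norm{Tf}\loe\norm f$; and $(Tf)\rest_L=f$ by construction. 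Thus $T$ is a regular extension operator as soon as every $Tf$ is continuous.

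The continuity of $Tf$ is the main obstacle and is all that genuinely needs proof. On $K\setminus L$ it is immediate: near a point of a gap $(a,b)$, $Tf$ agrees with $\lambda_{a,b}f(a)+(1-\lambda_{a,b})f(b)$, a continuous function, and on the open tails $Tf$ is constant. The delicate case is a point $p\in L$. Fixing $\eps>0$, continuity of $f$ gives an order-neighborhood $(c,d)\ni p$ with $\abs{f(y)-f(p)}<\eps$ for all $y\in(c,d)\cap L$, and it suffices to find a neighborhood $W\subs(c,d)$ of $p$ on which $\abs{(Tf)(x)-f(p)}<\eps$; on $W\cap L$ this holds by the choice of $(c,d)$, so only $x\in W\setminus L$ is at issue. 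Two mechanisms are available there: if the nearest points $a_x,b_x$ both lie in $(c,d)$, then $(Tf)(x)$ is a convex combination of $f(a_x)$ and $f(b_x)$, each within $\eps$ of $f(p)$, and we are done; alternatively, if $x$ lies in a gap abutting $p$ — say $(p,q)$ to the right — then $a_x=p$ and $(Tf)(x)-f(p)=\bigl(1-\lambda_{p,q}(x)\bigr)\bigl(f(q)-f(p)\bigr)$, which is small once $\lambda_{p,q}(x)$ is near its boundary value $1$.

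I would then build $W$ one side of $p$ at a time, choosing whichever mechanism applies. Consider the right side. For $x>p$ one always has $p\loe a_x\loe x$, so $a_x\in(c,d)$ once $x<d$; the only question is $b_x$. If $p$ is a right-hand limit of points of $L$, pick $\ell\in L\cap(p,d)$: then for $x\in(p,\ell)$ we get $b_x\in(p,\ell]\subs(c,d)$, and the convex-combination mechanism applies. If instead the least gap $(p,q)$ abuts $p$ on the right, then for $x\in(p,q)$ the weight $1-\lambda_{p,q}(x)$ tends to $0$ as $x\to p^+$, by continuity of $\lambda_{p,q}$ with $\lambda_{p,q}(p)=1$, so the second mechanism applies on a right-neighborhood of $p$; and if $p=\max L$ the right side is a tail where $Tf\equiv f(p)$. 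The left side is symmetric, using $\lambda(b)=0$ at the right endpoints of gaps. Intersecting the two one-sided neighborhoods yields $W$, which establishes continuity at $p$ and hence $Tf\in C(K)$, completing the proof.
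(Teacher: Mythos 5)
Your proof is correct, and it is essentially the argument the paper relies on: the paper gives no proof of this lemma but delegates it to \cite[Lemma 4.2]{K2006}, where the operator is built in just this way, by convex interpolation between $f(a_x)$ and $f(b_x)$ across each gap of $K\setminus L$ using fixed Urysohn weights, with the continuity check at a point $p\in L$ split into the same two cases (a gap abutting $p$, where the weight on the far endpoint tends to $0$, versus $p$ a limit of $L$, where both interpolation nodes eventually lie in a prescribed neighborhood). The only difference is cosmetic: the cited construction takes the weight functions to be order preserving (via increasing Urysohn functions, which exist on compact lines), a refinement that matters for some of the paper's later applications but is not needed for the statement as given, so your plain Urysohn functions suffice.
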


We will use this in the proof of Theorem~\ref{twergoq}.

\begin{lm}\label{wraasjJFWRQ}
Let $K$ be a nonempty $\aleph_0$-monolithic compact line. Then $C(K)$ has the regular controlled SCP. \end{lm}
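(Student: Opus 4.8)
The plan is to produce, for given countable sets $S\subs C(K)$ and $T\subs C(K)^*$, a single regular projection of the form $P=E\cmp R$, where $M\subs K$ is a suitably chosen closed \emph{metrizable} subline, $R\colon C(K)\to C(M)$ is the restriction operator and $E\colon C(M)\to C(K)$ is the regular extension operator supplied by Lemma~\ref{line-extension}. Since $R\cmp E=\id{C(M)}$, the composition $P=E\cmp R$ is automatically idempotent; it is regular, being a composition of the regular operators $R$ and $E$; and $\im P=E[C(M)]$ is separable because $M$ is metrizable. Moreover $\ker P=\ker R=\setof{f\in C(K)}{f\rest_M=0}$, so $\im P^*=(\ker P)^\perp$ contains every measure carried by $M$. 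Thus the requirement $T\subs\im P^*$ will reduce to arranging $\supp\mu\subs M$ for every $\mu\in T$, while the requirement $S\subs\im P$ will reduce to arranging $E(f\rest_M)=f$ for every $f\in S$.

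First I would make $M$ carry all the measures. For $\mu\in T$ the support $F=\supp\abs{\mu}$ is a closed subline on which $\abs{\mu}$ is strictly positive, so Lemma~\ref{lmsuslinmeasureAa} applies to $F$ and shows that $F$ is separable. Hence $\bigcup_{\mu\in T}\supp\mu$ is separable, and since $K$ is $\aleph_0$-monolithic its closure is metrizable. Secondly I would make $M$ resolve the functions of $S$. Each $f\in S$ factors as $f=g_f\cmp q_f$ through the quotient $q_f\colon K\to L_f$ by its maximal intervals of constancy, and $L_f$ is metrizable, being carried by the separable algebra generated by $f$. I would then fix a countable $A_f\subs K$ whose image is dense in $L_f$ and which contains the (countably many) endpoints in $K$ of the jumps of $L_f$. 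A short order argument, using this density together with the inclusion of the jump endpoints, then shows that if $M\sups A_f$ and $M$ is closed, then every gap of $M$ lies inside a single fibre of $q_f$, i.e.\ $f$ is constant on the closure of each gap of $M$. Taking $M$ to be the closure of the union of $\set{\min K,\max K}$, of all the supports $\supp\mu$ $(\mu\in T)$, and of all the sets $A_f$ $(f\in S)$, I obtain a closed separable, hence metrizable, subline with both desired features.

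It then remains to verify that $E(f\rest_M)=f$ for each $f\in S$. The two functions agree on $M$ by the extension property, so everything takes place on the gaps. Here I would use that the extension operator of Lemma~\ref{line-extension} can be taken \emph{local}: on the closure $[a,b]$ of a gap of $M$ its value is a convex combination of the endpoint values, so that $E(h\rest_M)$ is constant on $[a,b]$ whenever $h(a)=h(b)$. Since each $f\in S$ is constant on $[a,b]$ with $f(a)=f(b)$, this forces $E(f\rest_M)=f$, whence $S\subs\im P$. This verification, that the chosen extension operator reproduces exactly the functions which are constant on the gaps of $M$, is the main obstacle; once it is in place, $P=E\cmp R$ witnesses the regular controlled SCP. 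The only structural inputs are the separability of measure supports (Lemma~\ref{lmsuslinmeasureAa}), the existence of a regular local extension operator (Lemma~\ref{line-extension}), and $\aleph_0$-monolithicity, the latter used solely to keep the various separable closures metrizable.
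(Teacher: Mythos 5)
Your proof is correct and follows essentially the same route as the paper: the projection is $E\cmp R$ through a closed separable (hence, by $\aleph_0$-monolithicity, metrizable) subline containing the measure supports (separable via Lemma~\ref{lmsuslinmeasureAa}) and enough points to make each $f\in S$ constant on the closure of every gap, with the locality of the extension operator of Lemma~\ref{line-extension} doing the final work. The only difference is bookkeeping: the paper collects, for each $f$, the closed separable set of its ``relevant'' points (citing \cite{K2006}), whereas you take a countable set dense in the quotient $L_f$ together with the jump endpoints, which amounts to the same thing.
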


\begin{proof} Fix countable sets $D\subs C(K)$ and $G\subs C(K)^*$.
Fix $f\in C(K)$. We say that $p\in K$ is {\em irrelevant} for $f$ if
either $p=\min K$ and $f$ constant on $[p,b]$ for some $b>p$, or else
$p=\max K$ and $f$ is constant on $[a,p]$ for some $a<p$, or else
$\min K<p<\max K$ and $f$ is constant on $[a,b]$ for some $a<p<b$.
Denote by $X_f$ the set of all $p\in K$ which are not irrelevant for $f$. It has been observed in \cite[proof of Proposition 3.4]{K2006} that $X_f$ is separable and closed in $K$.

Now fix $\mu\in C(K)^*$ and let $Y=\suppt(\mu)$ be the support of $\mu$, i.e. the set of all points $p\in K$ such that $|\mu|(U)>0$ for every neighborhood $U$ of $p$. Then $Y$ is a closed subset of $K$ which satisfies the countable chain condition. Moreover, $|\mu|$ is strictly positive on $Y$. By Lemma~\ref{lmsuslinmeasureAa}, it follows that $Y$ is separable.
Define
$$X=\cl_K\Bigl(\bigcup_{f\in D}X_f \cup \bigcup_{\mu\in G}\suppt(\mu)\Bigr).$$
By the above arguments, $X$ is separable. Hence $X$ is second countable, because $K$ is $\aleph_0$-monolithic. Lemma~\ref{line-extension} says that there exists a regular extension operator $\map T{C(X)}{C(K)}$ such that for every $f\in C(X)$, all relevant points of $Tf$ are in $X$. In other words, $T$ is a linear operator satisfying $(Tf)\rest X=f$ for $f\in C(X)$, $T1=1$ and $Tf\goe0$ whenever $f\goe 0$. In particular $\norm T\loe 1$. Moreover, $Tf$ is constant on every interval disjoint from $X$. It follows that $T(f\rest X)=f$ whenever $X_f\subs X$. Let $P = T R$, where $\map R{C(K)}{C(X)}$ denotes the restriction operator $f\mapsto f\rest X$. Then $Pf=f$ for every $f\in D$ and $\im(P)$ is linearly isometric with $C(X)$, therefore it is separable. Given $\mu\in G$, we have that $\suppt(\mu)\subs X$, therefore $P^*\mu=\mu$. This shows that $G\subs \im(P^*)$ and completes the proof.
\end{proof}

The next lemma is perhaps of independent interest. It gives a criterion for the failure of SCP.
Recall that a Banach space is \emph{weakly compactly generated} (briefly: WCG) if it contains a weakly compact linearly dense set.

\begin{lm}\label{lnonSCPviafakeWCG}
Let $E$ be a Banach space such that $E\by F$ is WCG for some separable subspace $F\subs E$, while $E$ is not embeddable into any WCG space. Then no Banach space containing $E$ has the separable complementation property.
\end{lm}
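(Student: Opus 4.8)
The plan is to argue by contraposition: I will assume that some Banach space $Z\sups E$ has the SCP and deduce that $E$ embeds isomorphically into a WCG space, contradicting the hypothesis that $E$ admits no such embedding.

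First I would exploit the separability of $F$ together with the SCP of $Z$. Choosing a countable dense set $D\subs F$ and applying the SCP of $Z$ to $D$ produces a projection $\map PZZ$ with separable range such that $D\subs\im P$; since $\im P$ is closed it then contains $\overline{\Sp D}=F$, and because $x\in\im P$ exactly when $Px=x$, this gives $Pf=f$ for every $f\in F$. Thus a \emph{single} separable complemented subspace absorbing $F$ is all I need to extract from the SCP. Note it is essential that the SCP is applied in the ambient space $Z$, not in $E$, since $E$ itself is not assumed to have any complementation property.

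The second step is to recognize that the complementary projection factors through the good quotient. Writing $\map q E{E\by F}$ for the quotient map, the operator $(I-P)\rest E$ annihilates $F=\ker q$, hence factors as $(I-P)\rest E=\psi\cmp q$ for a bounded operator $\map\psi{E\by F}Z$. Since $E\by F$ is WCG, I fix a weakly compact $W\subs E\by F$ with $\overline{\Sp W}=E\by F$; then $\psi(W)$ is weakly compact in $Z$ (bounded operators are weak-to-weak continuous) and linearly dense in $\overline{\psi(E\by F)}=\overline{(I-P)(E)}$, so the latter space is WCG. Finally the map $e\mapsto(Pe,(I-P)e)$ is an isomorphic embedding of $E$ into $\im P\oplus\overline{(I-P)(E)}$, and this target is WCG as a finite direct sum of a separable (hence WCG) space and a WCG space; this is the desired contradiction.

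The genuine content — the step that really uses the hypothesis — is the observation in the third paragraph: once a projection captures all of $F$, its complement kills $F$ and is therefore controlled by the WCG quotient $E\by F$, so every $e\in E$ splits into a separable part $Pe$ and a WCG part $(I-P)e$. The remaining verifications are routine and I do not expect a serious obstacle: that separable spaces and finite direct sums of WCG spaces are WCG, that bounded operators preserve weak compactness, and that $e\mapsto(Pe,(I-P)e)$ is bounded below, which is immediate from $\norm e\loe\norm{Pe}+\norm{(I-P)e}$ under the sum norm on the direct sum.
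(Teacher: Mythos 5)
Your proof is correct, and it takes a genuinely different (and somewhat more direct) route than the paper's. Both arguments start identically: one application of the SCP in the ambient space produces a separable complemented subspace ($\im P$ for you, $Y$ in the paper) which, being closed and containing a dense subset of $F$, contains all of $F$. From there the paper forms $Z=\overline{E+Y}$ and proves that this entire space is WCG: it uses the identification $Z/E=(Z/F)/(E/F)$ together with the separability of $Z/E$ to show that $Z/F$ is WCG (adjoining a norm-null sequence to a weakly compact generator of $E/F$), then passes to the quotient $Z/Y$ and reassembles $Z=Y\oplus(Z/Y)$. You instead decompose $E$ itself: since $(I-P)\rest E$ annihilates $F$, it factors boundedly through the WCG space $E/F$, so $\overline{(I-P)(E)}$ is WCG (bounded operators are weak-to-weak continuous, hence carry the weakly compact generator to a weakly compact generator of the closed range), and $e\mapsto(Pe,(I-P)e)$ embeds $E$ into the WCG space $\im P\oplus\overline{(I-P)(E)}$. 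Your route avoids the paper's small three-space argument for $Z/F$ and the quotient bookkeeping, at the cost only of the unused extra conclusion that all of $\overline{E+Y}$ is WCG; the essential common core of both proofs is that the separable complemented piece absorbs $F$ and that WCG is preserved under bounded images. All the routine facts you defer (separable spaces are WCG, finite sums of WCG spaces are WCG, the triangle inequality gives the lower bound for the embedding) are indeed standard and unproblematic.
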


\begin{proof}
Suppose $X$ is a Banach space containing $E$ and $Y\subs X$ is separable, complemented in $X$ and such that $F\subs Y$.
Let $Z$ be the closure of $E+Y$.
Note that
$$Z\by E = (Z\by F) \by (E\by F)$$
and $E\by F$ is WCG, by the assumption.
On the other hand, $Z \by E$ is separable, because $Y$ is so.
We conclude that $Z\by F$ is WCG, because it is generated by $K\cup S$, where $K$ is a weakly compact set generating $E\by F$
and $S=\sn0\cup\setof{s_n}{\ntr}$ is such that
$\norm{s_n}\loe1/n$ for $\ntr$ and $\sett{s_n + E}{\ntr}$ is linearly dense in $Z\by E$.
Further, $Z\by Y$ is WCG, being the quotient of a WCG space $Z\by F$.
Finally, $Z = Y \oplus (Z\by Y)$ is WCG.
It follows that $E$ is embeddable into a WCG space, a contradiction.
\end{proof}

The above lemma will be applied to a particular space $E = C(K)$, where $K$ is a double arrow line.
Recall that $K$ is a \emph{double arrow line} if it has a two-to-one continuous increasing map onto the unit interval $[0,1] \subs \Err$.
More specifically, a double arrow line is, up to order isomorphism, a line of the form
$$D(A) = ([0,1]\times\sn0) \cup (A\times \sn1) \subs [0,1] \times \dn01,$$
endowed with the lexicographic ordering.
Note that $D(A)$ is metrizable if and only if the set $A\subs [0,1]$ is countable. On the other hand, $D(A)$ is always separable.
Let $K = D(A)$ and let $\map q{D(A)}{[0,1]}$ be the canonical increasing quotient map. Let
$$F := q^* C([0,1]) = \setof{f\cmp q}{f\in C([0,1])}.$$
It is well known and easy to check that $C(K) \by F$ is isomorphic
to $c_0(A)$.
On the other hand, $c_0(A)$ is a canonical example of a WCG Banach space, generated by a weakly compact set homeomorphic to the one-point compactification of a discrete space of size $|A|$.
Finally, if $|A| > \aleph_0$ then $C(K)$ is not embeddable into any WCG space, because subspaces of WCG spaces are weakly Lindel\"of, while $C(K)$ is not (all non-constant increasing functions into $\dn 01$ form an uncountable discrete subset of $C(K)$ in the weak topology).
The argument above goes back to Corson~\cite{Corson61}.
%cogolo

The following fact belongs to the folklore. For completeness, we give a proof.

\begin{lm}\label{ldblarrownonmon}
Let $K$ be a separable and non-metrizable compact line. Then $K$ contains a copy of a non-metrizable double arrow line.
\end{lm}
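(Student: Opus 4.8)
The plan is to exhibit $K$ itself, up to a closed subspace, as a two-to-one split of a metrizable compact line over an uncountable set of points, which is exactly a non-metrizable double arrow line. The conceptual tool is the canonical ``cut map'' of $K$ onto the completion of a countable dense set.

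First I would record that $K$ must have \emph{uncountably many jumps}. Indeed, suppose $K$ had only countably many jumps, and fix a countable dense $D\subseteq K$. Adjoin to $D$ the two endpoints of every jump together with $\min K$ and $\max K$; the resulting set $D'$ is countable. For $x$ in an open interval $U$, one finds a basic neighbourhood $(c,d)\ni x$ with $c,d\in D'$ inside $U$: on each side either the relevant open interval is nonempty, and then meets $D$ by density, or it is empty, and then it is a jump whose endpoint lies in $D'$. Hence the intervals with endpoints in $D'$ form a countable base, so $K$ would be second countable, i.e. metrizable, a contradiction. So fix $D$ and let $\widehat D$ denote its Dedekind completion, made compact by adjoining the missing extreme points. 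The jumps of $\widehat D$ are precisely the jumps of $D$, and each jump $d<d'$ of $D$ is a jump of $K$ (otherwise $(d,d')$ would be nonempty open, hence meet $D$); thus $\widehat D$ is separable with only countably many jumps, so by the same computation it is metrizable.

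Next I would analyse the cut map $q\colon K\to\widehat D$ sending $p$ to the cut $\langle\{d\in D:d<p\},\{d\in D:d>p\}\rangle$. It is an increasing surjection (surjectivity uses that $K$ is Dedekind complete, being compact), and continuity is routine since $q$ is increasing and onto. Its fibres are convex; if $p<p'$ lie in one fibre then $(p,p')\cap D=\emptyset$, so by density $(p,p')=\emptyset$, i.e. $\{p,p'\}$ is a jump of $K$. A third fibre point over some $t\notin D$ would force an isolated point lying outside $D$, which is impossible; hence fibres over $\widehat D\setminus D$ have at most two points, while the countably many fibres over $D$ have at most three. Set $A=\{t\in\widehat D\setminus D:|q^{-1}(t)|=2\}$. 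If $A$ were countable, then $K$ would arise from the second countable space $\widehat D$ by doubling countably many points and adjoining countably many more, hence would be second countable — contradiction. So $A$ is uncountable. For $t\in A$ the two points $a<b$ of $q^{-1}(t)$ form a jump of $K$ with $a,b\notin D$; then $a$ has no immediate predecessor and $b$ no immediate successor (else they would be isolated, hence in $D$), so $t$ is a two-sided limit of $D$, i.e. a non-jump point of $\widehat D$.

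Finally I would extract the double arrow. Put $N=\overline A\subseteq\widehat D$, a metrizable compact line, and let $L\subseteq K$ be obtained from $q^{-1}(N)$ by keeping a single representative over each point of $D\cap N$. Then $q|_{L}\colon L\to N$ is an increasing continuous surjection that is two-to-one exactly over the uncountable set $A$ and one-to-one elsewhere; that is, $L$ is the split of the metrizable line $N$ at the uncountable set $A$. Composing $q|_L$ with a monotone continuous surjection $N\to[0,1]$ gives the canonical two-to-one-type map onto $[0,1]$, so $L$ is a non-metrizable double arrow line, and it is a closed subspace of $K$. I expect the only real obstacle to be this last extraction rather than the cut-map argument: one must check that discarding the superfluous points over the countable set $D\cap N$ (the sources of the three-point fibres) yields a genuinely closed subspace without disturbing the two-to-one structure over $A$. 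This is exactly where the facts that the splittings occur at non-jump points of $\widehat D$ and that $D$ is countable are needed, and it is the part that requires careful bookkeeping.
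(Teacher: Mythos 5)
Your route is genuinely different from the paper's: the paper quotients $K$ by the relation ``$[x,y]$ is metrizable'', observes that the quotient is connected and hence a copy of $[0,1]$, and then takes a \emph{minimal} closed subset of $K$ mapping onto it; you instead use the cut map onto the completion $\widehat D$ of a countable dense set. Most of your argument is sound --- the jump count, the metrizability of $\widehat D$, the analysis of the fibres of $q$, and the uncountability of $A$ are all correct. The genuine gap is in the final extraction, and it is not the one you flag. The line $N=\overline A$ is metrizable and compact, but nothing forces it to be order isomorphic to $[0,1]$: it can be totally disconnected, have jumps, and --- this is the fatal case --- contain infinite countable intervals. Concretely, take $K=D(A_0)$ with $A_0=C_1\cup S\cup C_2$, where $C_1\subs[0,\tfrac14]$ and $C_2\subs[\tfrac34,1]$ are uncountable sets of irrationals and $S=\setof{\tfrac12\pm s_n}{n\in\Nat}$ consists of irrationals with $s_n\searrow 0$, $s_1<\tfrac14$. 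With $D=(\Qyu\cap[0,1])\times\sn 0$ your construction returns $A=A_0$ and $N=\overline{A_0}$, which contains the countably infinite interval $[\tfrac12-s_1,\tfrac12+s_1]\cap\overline{A_0}=S\cup\sn{\tfrac12}$. The resulting $L$ then contains a countably infinite \emph{open} set with a non-isolated point (the split of a two-sidedly convergent sequence), whereas in any $D(A')$ every open set with at least three points is uncountable; so this $L$ is not order isomorphic to any double arrow line. Correspondingly, any monotone continuous surjection $N\to[0,1]$ must collapse that countable interval to a point, so the composed map $L\to[0,1]$ acquires an infinite fibre and is not two-to-one. Note that this failure occurs even though $K$ itself was already a non-metrizable double arrow.

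The conclusion is still within reach from where you stand, but it needs one more reduction. For instance: the maximal non-degenerate countable closed intervals of $\widehat D$ are pairwise disjoint, hence countable in number (each is a jump or has nonempty interior, and $\widehat D$ is second countable), and each is countable, so after discarding from $A$ the countably many points lying in such intervals an uncountable $A^{*}\subs A$ remains on which the condensation $\widehat D\to[0,1]$ is injective; one must then select a closed subset of $K$ meeting the fibres of the composed map in at most two points --- which is essentially the paper's device of a minimal closed subset mapping onto $[0,1]$. The secondary issue you do flag (closedness of $L$ after discarding representatives over the countable set $D\cap N$) is real but minor: the middle points of three-point fibres are isolated in $K$ and can be removed safely, while the outer points may simply be kept, since an at most two-to-one fibre over countably many extra points does no harm. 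It is the shape of $N$, not this bookkeeping, that breaks the proof as written.
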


\begin{proof}
Define the following equivalence relation on $K$:
$$x \sim y \iff [x,y] \text{ is non-metrizable}.$$
It is easy to check that $\sim$ is indeed an equivalence relation and that its classes are convex.
As $K$ is separable, it is also first countable and hence the $\sim$-equivalence classes are closed.
Let $J = K\by \sim$ and let $\map qKJ$ be the canonical quotient map.
Observe that $J$ is not a singleton, since $K$ is not metrizable.
Further, $J$ is connected.
Indeed, if $a,b\in J$ were such that $a < b$ and $[a,b] = \dn ab$, then taking $s = \max q^{-1}(a)$, $t = \min q^{-1}(b)$, we would have $s \not\sim t$ and $[s,t] = \dn st$, which contradicts the definition of $\sim$.
It follows that $J$ is order isomorphic to the standard unit interval $[0,1] \subs \Err$.

Now let $L\subs K$ be a minimal closed set such that $f = q \rest L$ is onto.
Note that $f$ is two-to-one, since given $t\in J$, removing the interior of the interval $f^{-1}(t)$ we obtain a smaller closed set that maps onto $J$.
We conclude that $L$ is a double arrow, order isomorphic to $D(A)$, where $A=\setof{t\in J}{|f^{-1}(t)|>1}$.
\end{proof}

\begin{proof}[Proof of Theorem~\ref{twergoq}]
Implications (a)$\implies$(b)$\implies$(c) are trivial and (d)$\implies$(a) is the content of Lemma~\ref{wraasjJFWRQ}. It remains to show that (c)$\implies$(d).

Fix a compact line $K$ which is not $\aleph_0$-monolithic. We shall prove that $C(K)$ fails the SCP.
Fix a separable and non-metrizable closed subset $K_0$ of $K$.
By Lemma~\ref{ldblarrownonmon}, $K_0$ contains a non-metrizable double arrow $L$.
By Lemma~\ref{line-extension}, $C(L)$ is embeddable into $C(K)$.
Further, $C(L)$ is not embeddable into any WCG space, yet it contains a natural copy $F$ of $C([0,1])$ such that $C(L)\by F$ is WCG (being isomorphic to $c_0(A)$ for some set $A$).
Finally, by Lemma~\ref{lnonSCPviafakeWCG}, $C(K)$ fails to have the SCP.
\end{proof}

For averaging operators the situation is more complicated. Recall that {\it an averaging operator} of a quotient mapping $q:K\to L$ of a compact space $K$ onto a compact space $L$ is a bounded linear operator $T:C(K)\to C(L)$ satisfying
$T(f)\circ q=f$ for each $f\in C(K)$. The existence of an averaging operator is thus equivalent to the complementability of the canonical copy of $C(L)$ (i.e., of $q^*C(L)$, see the previous section) in $C(K)$. We will characterize the existence of an averaging operator for an order preserving quotient of compact lines.

To this end we need two notions which will be used throughout this paper.

Firstly, let $K$ be a compact line and $x\in K$. We say that $x$ is an {\it external} point of $K$, if $x$ is an isolated point either of $[x,\to)$ or of $(\leftarrow,x]$. If $x$ is not external, i.e. if it is isolated neither in $[x,\to)$ nor in $(\leftarrow,x]$, we say that $x$ is an {\it internal} point of $K$.

Secondly, let $X$ be a linearly ordered set and $A\subs X$. We shall define an order for points of $A$ and an order for $A$. We will call that order {\em internal order}. The definition reads as follows.  If $A=\emptyset$, let $\io(A)=-1$. 
Further, if $A$ is nonempty, we define $\io(x,A)$ for each $x\in A$ inductively:
\begin{itemize}
	\item $\io(x,A)\ge0$  for each $x\in A$;
	\item $\io(x,A)\ge n$ if both sets
	\begin{align*} A_{n-1}^-&=\{y\in A: y<x \mbox{ and } \io(y,A)\ge n-1 \}\\
	A_{n-1}^+&=\{y\in A: y>x \mbox{ and } \io(y,A)\ge n-1\} \end{align*}
	are nonempty and $\sup A_{n-1}^-=\inf A_{n-1}^+=x$.
\end{itemize}
%\begin{align*}
%\io(x,A)\ge0 & \mbox{ for each }x\in A,\\
%\io(x,A)\ge n & \mbox{ if } \exists y<x \,\exists z>x\, \forall u\in[y,x)\,\forall v\in (x,z] \\ & \qquad \exists r\in(u,x)\,\exists s\in (x,v):\io(s,A)\ge n-1 \& \io(r,A)\ge n-1.
%\end{align*}
Now,
$$\io(x,A)=\sup\{n\in\Nat\cup\{0\}: \io(x,A)\ge n\},\qquad
\io(A)=\sup\{\io(x,A):x\in A\}.$$ 
Therefore, $\io(x,A)$ can take values in $\Nat\cup\{0,\infty\}$ and $\io(A)$ takes value in $\Nat\cup\{-1,0,\infty\}$. It would be possible to allow also infinite ordinal values for the internal order, but it is not needed for our purposes.

We will need the following result on additivity of the internal order:

\begin{lm}\label{io-additivity} Let $X$ be a linearly ordered set and $A,B\subs X$. Then
$$\io(A\cup B)\le \io(A)+\io(B)+1.$$
\end{lm}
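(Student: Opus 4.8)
The plan is to recast the internal order through a derivative operator and then prove a one-sided ``splitting'' statement by induction. For a subset $S$ of the ordered set $X$, write $\partial S$ for the set of those $x\in S$ that are two-sided accumulation points of $S$, i.e.\ $\sup(S\cap(\leftarrow,x))=x=\inf(S\cap(x,\to))$, and let $\partial^{n}S$ denote the $n$-fold iterate ($\partial^{0}S=S$). Directly from the recursive definition of the internal order one checks that $\io(x,S)\ge n\iff x\in\partial^{n}S$, whence $\io(S)=\max\setof{n}{\partial^{n}S\nnempty}$, with the usual conventions in the cases $-1$ and $\infty$. The inequality is trivial when $\io(A)$ or $\io(B)$ is infinite and when one of the sets is empty, so I may assume that $a:=\io(A)$ and $b:=\io(B)$ are nonnegative integers, and it then suffices to prove $\partial^{a+b+2}(A\cup B)=\emptyset$.

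The heart of the argument is the following symmetric claim, to be proved for $A$ and for $B$ simultaneously by induction on $i+j$: $(\ast)$ for all $i,j\ge0$, if $x\in A$ and $x\in\partial^{i+j}(A\cup B)$, then $x\in\partial^{i}A$ or $x\in\Cl{\partial^{j}B}$ (and symmetrically with $A,B$ interchanged). Granting $(\ast)$ the lemma follows at once by taking $i=a+1$ and $j=b+1$: any $x\in\partial^{a+b+2}(A\cup B)$ lies in $A$ or in $B$, and in either case $(\ast)$ forces $x\in\partial^{a+1}A\cup\Cl{\partial^{b+1}B}$, which is empty since $\partial^{a+1}A=\partial^{b+1}B=\emptyset$ by the definitions of $a$ and $b$.

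For the induction, the base case $i=j=0$ is immediate, and if $i=0$ the conclusion already holds because $x\in A=\partial^{0}A$. So I assume $i\ge1$; aiming at $(\ast)$ I may assume $x\notin\Cl{\partial^{j}B}$ and must derive $x\in\partial^{i}A$. Fix an open interval $W\ni x$ disjoint from $\partial^{j}B$ (then $W\cap\Cl{\partial^{j}B}=\emptyset$ too). Since $\partial^{i}A=\partial(\partial^{i-1}A)$, I need two things: that $x\in\partial^{i-1}A$, and that $x$ is a two-sided accumulation point of $\partial^{i-1}A$. The first follows by applying the induction hypothesis with indices $(i-1,j)$ to $x$, using $x\notin\Cl{\partial^{j}B}$. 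For the second, note that $x\in\partial^{i+j}(A\cup B)$ supplies points $y$ of $\partial^{i+j-1}(A\cup B)$ with $y\uparrow x$ and with $y\downarrow x$; choosing them inside $W$ and applying the induction hypothesis to each such $y$ (the $A$-version with exponents $(i-1,j)$ when $y\in A$, the $B$-version with exponents $(j,i-1)$ when $y\in B$, both of total order $i+j-1$), the disjointness $W\cap\Cl{\partial^{j}B}=\emptyset$ rules out the $\Cl{\partial^{j}B}$/$\partial^{j}B$ alternative and forces $y\in\Cl{\partial^{i-1}A}$ in every case. As these $y$ approach $x$ from both sides, $x$ is a two-sided accumulation point of $\Cl{\partial^{i-1}A}$, hence of $\partial^{i-1}A$ itself. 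Thus $x\in\partial^{i}A$, completing the step.

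The routine auxiliary facts I will record are that one-sided accumulation distributes over finite unions, that it is insensitive to closure (a point one-sided accumulates $S$ iff it one-sided accumulates $\Cl S$), and that an open interval missing a set also misses its closure. The main obstacle is the accumulation half of the inductive step: this is precisely where the ``$+1$'' in the estimate is consumed and where two-sidedness is indispensable, because a point $y$ approaching $x$ may lie in the ``wrong'' set $B$, and it is only the neighbourhood $W$ — on which $\partial^{j}B$ and hence its closure vanish — that forces such $y$ back into $\Cl{\partial^{i-1}A}$ through the symmetric induction hypothesis.
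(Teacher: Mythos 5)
Your proof is correct, and it takes a genuinely different route from the one in the paper. You first identify $\setof{x\in S}{\io(x,S)\ge n}$ with the $n$-th iterate $\partial^{n}S$ of a two-sided derivative operator --- this does require the small observation that a point of $S$ which two-sidedly accumulates $\partial^{n-1}S$ automatically belongs to $\partial^{n-1}S$, but that follows by an easy induction, so your ``one checks'' is fair --- and you then prove the pointwise splitting statement $(\ast)$ by induction on $i+j$. The paper instead inducts directly on $n$ with $\io(A\cup B)\ge n$: it fixes a witness $x$, say $x\in A$, restricts $A$ and $B$ to one-sided intervals $A\cap(x,y)$ and $A\cap(z,x)$, applies the inductive hypothesis to these restrictions, and uses a stabilization argument (the integer values $\io(A\cap(x,y))$ are monotone in $y$, hence constant for $y$ close to $x$) to convert this back into the lower bound $\io(x,A)\ge\io(A\cap(z,x))+1$. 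Your route buys a sharper, localized conclusion: $(\ast)$ records, for every decomposition $i+j$ of the order at $x$, which of the two sets is responsible, and it replaces the stabilization step by the cleaner device of a neighbourhood $W$ missing $\partial^{j}B$; the point where the ``$+1$'' is spent is the same in both arguments, namely that the points accumulating at $x$ may lie in the wrong set. What the paper's version buys is that it never leaves the recursive definition of $\io$ and needs no order topology or closure operator. I verified the delicate spots of your argument --- the base case $j=0$ (where $W$ misses $B$ itself, so no $y\in B$ arises), the passage from accumulation of $\Cl{\partial^{i-1}A}$ to accumulation of $\partial^{i-1}A$, and the fact that both symmetric versions of $(\ast)$ are carried simultaneously in the induction on the total $i+j$ --- and they all hold.
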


\begin{proof} Let $n\ge-1$ be such that $\io(A\cup B)\ge n$. We will prove by induction on $n$ that $\io(A)+\io(B)+1\ge n$ as well.

If $n=-1$, then obviously $\io(A)+\io(B)+1\ge -1+(-1)+1=-1$. If $n=0$, then $A\cup B$ is nonempty, therefore at least one of the sets $A$ and $B$ is nonempty, too. Hence the inequality follows.

Now suppose that $n>0$ and that the assertion is valid for all smaller values.
Suppose that $\io(A\cup B)\ge n$. Then there is $x\in A\cup B$ such that $\io(x,A\cup B)\ge n$. The point $x$ belongs to one of these sets, without loss of generality suppose $x\in A$. Take any $y>x$ and set
$$A^y=A\cap (x,y),\qquad B^y=B\cap (x,y).$$
It follows from the assumption $\io(x,A\cup B)\ge n$ that such a $y$ does exist and that for each choice of such a $y$ we have $\io(A^y\cup B^y)\ge n-1$. By the induction hypothesis we get
$$\io(A^y)+\io(B^y)+1\ge n-1.$$
If $y'\in(x,y)$, then clearly $\io(A^{y'})\le\io(A^y)$ and $\io(B^{y'})\le\io(B^y)$. As the values of the internal order are only from $\Nat\cup\{-1,0,\infty\}$, there is $y_0>x$ such that for $y\in (x,y_0)$ the values of $\io(A^y)$ and $\io(B^y)$ do not depend on $y$. 

Similarly we can proceed for $z<x$ -- define $A_z$ and $B_z$ and find $z_0<x$  
such that for $z\in(z_0,x)$ the values of $\io(A_z)$ and $\io(B_z)$ do not depend on $z$. Fix any $z\in(z_0,x)$ and $y\in(x,y_0)$. Suppose without loss of generality that $\io(A_z)\le\io(A^y)$. Then
$$\io(x,A)\ge\io(A_z)+1,$$
and so
$$\io(A)+\io(B)+1\ge \io(x,A)+\io(B)+1\ge \io(A_z)+1+\io(B_z)+1\ge n,$$
which was to be proved.
\end{proof}

Below is the crucial lemma involving internal order. It improves \cite[Lemma~4.1]{K2006}.
A similar negative result, for certain maps of $0$-dimensional metric compacta, can be found in the last chapter of Pe\l czy\'nski's dissertation~\cite{Pel-diss}.

\begin{lm}\label{lines-averaging} Let $K$ and $L$ be compact lines and $q:K\to L$ be an order preserving continuous surjection. Set
$$Q=\{x\in L:x\mbox{ is an internal point of }L\mbox{ and }|q^{-1}(x)|>1\}.$$
%
%and denote by $q^*C(L)$ the canonical copy of $C(L)$ in $C(K)$, i.e.,
%
%$$q^*C(L)=\{f\cmp q:f\in C(L) \}.$$
Then we have the following:
\begin{enumerate}
	\item $q^*C(L)$ is $1$-complemented in $C(K)$ if and only if $Q=\emptyset$. In this case $q$ admits a right inverse and hence also a regular averaging operator.
	\item $q^*C(L)$ is complemented in $C(K)$ if and only if $\io(Q)<\infty$.
	More specifically:
	
\begin{itemize}
	\item[(a)] If\/ $\io(Q)\ge n$, then any projection $P$ of $C(K)$ onto $q^*C(L)$ satisfies $\|P\|\ge2+\lceil\frac{n-1}2\rceil$.
	\item[(b)] If\/ $\io(Q)\le n$, then there is a projection $P$ of $C(K)$ onto $q^*C(L)$ with $\|P\|\le 2n+3$.
\end{itemize}
\end{enumerate}
\end{lm}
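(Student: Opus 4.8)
The plan is to identify projections of $C(K)$ onto $q^*C(L)$ with \emph{averaging operators} and then to translate everything into a statement about measures. A bounded projection $P$ with $\im P=q^*C(L)$ must send each $f$ to a function that is constant on the fibres of $q$, so $Pf=(Tf)\cmp q$ for a unique bounded $T\colon C(K)\to C(L)$ satisfying $Tq^*=\mathrm{id}$, and $\norm P=\norm T$ because $q^*$ is isometric. Representing $T$ by a \weakstar-continuous family of measures $x\mapsto\mu_x$ via $Tf(x)=\int_K f\,d\mu_x$, the identity $Tq^*=\mathrm{id}$ is exactly $q_*\mu_x=\delta_x$, and $\norm P=\sup_{x\in L}\norm{\mu_x}$. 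The decisive observation, and the reason finite norm is possible at all over nondegenerate fibres, is that $\mu_x$ need \emph{not} be supported on $q^{-1}(x)=[a_x,b_x]$: since $q^*$ sees only the pushforward, $\mu_x$ may carry cancelling mass on other (nondegenerate) fibres, and it is exactly this cancellation that one must organise continuously.

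For statement (1), I would first do $Q=\emptyset$ by producing a continuous order preserving section $s\colon L\to K$. The key elementary fact is that for \emph{any} selection $s(x)\in q^{-1}(x)$ one has $\sup_{y<x_0}s(y)=a_{x_0}$ at every left-limit $x_0$ and $\inf_{y>x_0}s(y)=b_{x_0}$ at every right-limit (apply continuity of $q$ to the monotone limit to force it into the fibre, then pin it to the relevant endpoint). Hence a monotone section is continuous precisely when $s(x_0)=a_{x_0}$ at left-limits and $s(x_0)=b_{x_0}$ at right-limits; a split point that is both a left- and a right-limit would force $a_{x_0}=b_{x_0}$, which is impossible. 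Thus a continuous section exists iff every split point is external, i.e. iff $Q=\emptyset$, and then $T=s^*$ is regular of norm one, giving the regular averaging operator. The converse of (1), and the equivalence ``complemented iff $\io(Q)<\infty$'' in (2), then follow formally from the quantitative estimates: applying (2a) with $n=0$ gives $\norm P\geq 2$ whenever $Q\neq\emptyset$, while $\io(Q)\leq n$ yields a bounded projection by (2b) and $\io(Q)=\infty$ forces $\norm P\geq 2+\lceil\tfrac{n-1}2\rceil\to\infty$ by (2a).

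For the lower bound (2a) I would fix $x^*$ with $\io(x^*,Q)\geq n$ and descend through the nested two-sided families of $Q$-points witnessing the internal order, recording at each level the fibre endpoints $a_x,b_x$. Testing $x\mapsto\mu_x$ against a norm-one function $f\in C(K)$ that alternates between the values $0$ and $1$ across successive nested fibres, and using that $x\mapsto\mu_x(f)=Tf(x)$ is continuous, I would set up a telescoping estimate forcing $\norm{\mu_x}$ to be large at a suitable $x$. The point is that each completed oscillation of $f$ adds one full unit to the total variation of the relevant measure, but a continuous oscillation can only be forced by accumulation on \emph{both} sides, i.e. it consumes two levels of internal order; this two-levels-per-unit accounting is the origin of the factor $\tfrac12$ and the ceiling in $2+\lceil\tfrac{n-1}2\rceil$. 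I expect to run this as an induction on $n$.

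For the upper bound (2b) I would construct $\{\mu_x\}$ recursively, stratifying $Q$ by the rank $x\mapsto\io(x,Q)$. At rank $0$ the transition of mass across a fibre is handled by the basic interpolation $\tfrac12(\delta_{a_x}+\delta_{b_x})$, costing norm at most $2$, and \weakstar-continuity as $x$ sweeps across the fibre is maintained by sliding this mass using cancelling measures on neighbouring fibres; deeper ranks are incorporated by nesting these constructions, with Lemma~\ref{io-additivity} bounding how the pieces combine and keeping the recursion finite, and Lemma~\ref{line-extension} used to realise the data as a genuine bounded operator on $C(K)$. The main obstacle, in both directions, is the quantitative bookkeeping of the signed cancelling measures: one must verify that the interpolating family is simultaneously \weakstar-continuous at every internal point and at every scale, and then align the accumulated variation with the nesting encoded by the internal order so that the constants come out as $2+\lceil\tfrac{n-1}2\rceil$ and $2n+3$. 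This matching, rather than any single ingredient, is where the real work lies.
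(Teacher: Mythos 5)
Your framing of projections onto $q^*C(L)$ as averaging operators $T$ with $Tq^*=\mathrm{id}$, equivalently as \weakstar-continuous families $(\mu_x)_{x\in L}$ with $q_*\mu_x=\delta_x$ and $\norm{P}=\sup_x\norm{\mu_x}$, is sound, and your part (1) (a monotone selection $s(x)\in q^{-1}(x)$ is continuous iff it equals $\min q^{-1}(x)$ at left-limits and $\max q^{-1}(x)$ at right-limits, which is arrangeable exactly when $Q=\emptyset$) matches the paper's construction of a right inverse; so does your formal derivation of the converse of (1) and of the equivalence in (2) from the estimates (2a), (2b). The quantitative parts, however, are only plans, and (2a) is missing the one idea that makes the lower bound work. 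The paper applies $P$ to the canonical jump function $\chi_p$ ($-1$ below the fiber of $p$, $+1$ above it), writes $P\chi_p=\bar h_p\cmp q$, and splits $Q$ into $Q^-=\{p:\re\bar h_p(p)<\delta\}$ and $Q^+=\{p:\re\bar h_p(p)>-\delta\}$; by Lemma~\ref{io-additivity} one of these has internal order at least $m=\lceil\frac{n-1}{2}\rceil$, and \emph{this dichotomy} is the sole source of the factor $\frac12$ --- not a ``two-sided oscillation consumes two levels'' accounting. Inside $Q^-$ (say) one builds a purely one-sided chain $p_0<\dots<p_m<p_{m+1}$ with $\io(p_j,Q^-)\ge m-j$ and each $p_j$ in the sets $\{\re\bar h_{p_i}<\delta\}$ for $i<j$, and a norm-one $f$ equal to $\chi_{p_j}$ on each fiber $[p_j^-,p_j^+]$ with $f(p_{m+1}^-)=-1$; then $Pf(p_{m+1}^-)\le -2-m+(m+1)\delta$ because all the corrections $\bar h_{p_j}-\chi_{p_j}$ have the same sign at $p_{m+1}^-$, so each level contributes a \emph{full} unit. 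Your alternating test function and unspecified ``telescoping estimate'' give no reason why the contributions of successive levels accumulate rather than cancel; without the $Q^{\pm}$ dichotomy the induction you envisage has no engine, and your two-levels-per-unit heuristic is the wrong bookkeeping.

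For (2b) the paper's construction is considerably more elementary than the recursive measure family you sketch: define $P$ on the dense set of functions constant on all but finitely many fibers by replacing the jump $c_jg_j$ across the fiber of $p_j$ with $c_jh_{p_j}\cmp q$, where $h_{p_j}$ is a Urysohn function supported on an interval $(\alpha_{p_j},\beta_{p_j})$ attached to $p_j$ and chosen, using $\io(p_j,Q)\le n$, so that intervals attached to points of equal rank are pairwise disjoint. The bound $2n+3$ then falls out because $|c_j|\le2\norm{f}$ and each point of $K$ meets at most $n+1$ of these intervals (at most one per rank value); neither Lemma~\ref{io-additivity} nor Lemma~\ref{line-extension} enters here, contrary to what you suggest. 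Your rank-$0$ prescription $\tfrac12(\delta_{a_x}+\delta_{b_x})$ is not \weakstar-continuous at an internal split point, and the ``sliding with cancelling measures'' needed to repair it is exactly the unproven content of your plan; I would carry out (2b) along the paper's lines instead.
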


\begin{proof}
We start by proving assertion 2(a).
Suppose that $\io(Q)\ge n$ and that $P$ is a bounded linear projection  of $C(K)$ onto $q^*C(L)$. We will modify and refine the proof of \cite[Lemma 4.1]{K2006}.

For each $x\in L$ the inverse image $q^{-1}(x)$ is a closed interval in $K$ which will be denoted by $[x^-,x^+]$. Then $q^*C(L)$ are exactly those functions from $C(K)$ which are constant on each interval of the form $[x^-,x^+]$ with $x\in L$.

For each $p\in Q$ choose a continuos function $\chi_p:K\to[-1,1]$ such that
$$\chi_p(x)=\begin{cases} -1 & x\le p^-\\ 1 & x\ge p^+ \end{cases}$$
(such a fucntion exists due to the Urysohn lemma) and set $h_p=P(\chi_p)$. Denote by $\bar h_p$ the unique function from $C(L)$ such that $h_p=\bar h_p\cmp q$. Fix any $\delta\in(0,1)$ and set
$$Q^-=\{ p\in Q: \re \bar h_p(p)<\delta\}, \quad Q^+=\{p\in Q:  \re \bar h_p(p)>-\delta\}.$$
Then $Q=Q^-\cup Q^+$ and hence by Lemma~\ref{io-additivity} we have
$\io(Q^-)+\io(Q^+)+1\ge n$. Therefore at least one of the numbers $\io(Q^-)$, $\io(Q^+)$ is at least $m=\lceil\frac{n-1}2\rceil$ (we are using that these numbers are integers). 

Suppose first that $\io(Q^-)\ge m$.
For $p\in Q^-$ set
$$U_p^-=\{x\in L: \re \bar h_p(x)<\delta\}.$$
Then $U_p^-$ is an open set containing $p$. We will choose points
$$p_0<p_1<\dots<p_m$$
in $Q^-$ such that for each $j=0,\dots,m$ we have $\io(p_j,Q^-)\ge m-j$ 
and $p_j\in U_{p_0}^-\cap\dots U_{p_{j-1}}^-$. This is possible by the definition of the internal order using the fact that $\io(Q^-)\ge m$.
Further choose $p_{m+1}>p_m$ such that $p_{m+1}\in\bigcap_{j=0}^m U_{p_j}^-$. This is possible as $p_m$ is an internal point.

Finally choose  a continuous function $f:K\to[-1,1]$ such that $f\rest_{[p_j^-,p_j^+]}=\chi_{p_j}\rest_{[p_j^-,p_j^+]}$ for $j=0,\dots,m$, $f(p_{m+1}^-)=-1$ and $f$ is constant on $[p^-,p^+]$ for $p\in L\setminus\{p_0,\dots,p_m\}$. Such a function can be constructed as follows:
For each $j\in\{0,\dots,m-1\}$ choose by Urysohn lemma a continuous function $u_j:L\to[-1,1]$ such that $u_j(p_j)=1$ and $u_j(p_{j+1})=-1$ and set
$$f(x)=\begin{cases} -1, & x\le p_0^-, \\ \chi_{p_j}(x), & x\in [p_j^-,p_j^+], j=0,\dots,m, \\ u_j(q(x)), & x\in [p_{j}^+,p_{j+1}^-], j=0,\dots,m, \\
-1, & x\ge p_{m+1}^-.\end{cases}$$
If we set $g=f-\sum_{j=0}^m \chi_{p_j}$, we get $g\in q^*C(L)$, i.e.  $Pg=g$. Thus
$$Pf=g+\sum_{j=0}^m h_{p_j}=f-\sum_{j=0}^m\chi_{p_j}+\sum_{j=0}^m h_{p_j}.$$
As $\|f\|=1$ we have
\begin{align*}
-\|P\|&\le - \|P(f)\|\le -|P(f)(p_{m+1}^-)|\le \re P(f)(p_{m+1}^-) \\&
=f(p_{m+1}^-)-\sum_{j=0}^m \chi_{p_j}(p_{m+1}^-)+\sum_{j=0}^m \re \bar h_{p_j}(p_{m+1}) \\ &
\le -1-(m+1)+(m+1)\delta= -2-m+(m+1)\delta.\end{align*}
It follows that $\|P\|\ge2+m-(m+1)\delta$.

If $\io(Q^+)\ge m$, we proceed similarly: We define $U_p^{+}=\{x\in L:\re \bar h_p(x)>-\delta\}$ and choose points $p_0>p_1>\dots>p_m$ in $Q^+$ such that
for each $j=0,\dots,m$ we have $\io(p_j,Q^+)\ge m-j$ 
and $p_j\in U_{p_0}^+\cap\dots U_{p_{j-1}}^+$. Further we choose $p_{m+1}<p_m$ such that $p_{m+1}\in\bigcap_{j=0}^m U_{p_j}^+$. Finally  we choose  a continuous function $f:K\to[-1,1]$ such that $f\rest_{[p_j^-,p_j^+]}=\chi_{p_j}\rest_{[p_j^-,p_j^+]}$ for $j=0,\dots,m$, $f(p_{m+1}^+)=1$ and $f$ is constant on $[p^-,p^+]$ for $p\in L\setminus\{p_0,\dots,p_m\}$. By analogous inequalities we prove that
$\re Pf(p_{m+1}^+)\ge 2+m-(m+1)\delta$, so again $\|P\|\ge 2+m-(m+1)\delta$.

As $\delta\in(0,1)$ is arbitrary, we conclude that
$$\|P\|\ge 2+m=2+\left\lceil \frac{n-1}2\right\rceil$$ 
which completes the proof of 2(a).

Remark that we have in fact proved also the `only if' parts of the assertions 1 and 2. Indeed, if $Q\ne\emptyset$, then $\io(Q)\ge 0$ and hence any projection $P$ of $C(K)$ onto $q^*C(L)$ satisfies $\|P\|\ge 2+\lceil\frac{0-1}2\rceil=2$. Thus there is no projection of norm $1$.
Further, if $\io(Q)=\infty$, then $\io(Q)\ge n$ for each $n\in\nat$. So any projection $P$ of $C(K)$ onto $q^*C(L)$ satisfies $\|P\|\ge 2+\lceil\frac{n-1}2 \rceil$ for each $n\in\nat$. Therefore there is no bounded linear projection.

We proceed by proving the `if' part of the assertion 1. Suppose that $Q=\emptyset$. We will describe a right inverse for $q$. Define a mapping
$i:L\to K$ by setting
$$i(x)=\begin{cases} x^+, & \mbox{ if }|q^{-1}(x)|=1,\mbox{ i.e., }x^+=x^-,\\
                     x^+, & \mbox{ if }x^-\mbox{ is external in }K,\\
                     x^-, & \mbox{ otherwise}. \end{cases}$$
It is clear that $q\cmp i=\id L$. Moreover, it is easy to check that $i$ is continuous. Indeed, it follows from our assumptions that at least one of the points $x^-$ and $x^+$ is external in $K$ whenever $x^-<x^+$. 

It remains to prove the assertion 2(b). Suppose that $n\in\Nat\cup\{0\}$ and $\io(Q)\le n$. For each $p\in Q$ find an open interval $(\alpha_p,\beta_p)$ in $L$ such that $p$ is one of the endpoints and this interval contains no point $p'\in Q$ with $\io(p',Q)\ge\io(p,Q)$. Such a choice is possible due to the definition of the internal order and due to the fact that $\io(p,Q)<\infty$ for each $p\in Q$. Moreover, it can be easily achieved that $[\alpha_p,\beta_p]$ and $[\alpha_{p'},\beta_{p'}]$ are disjoint whenever $p,p'\in Q$ are two distinct points with $\io(p,Q)=\io(p',Q)$.

For each $p\in Q$ we can choose a continuous function $h_p:L\to[0,1]$ with $h_p(x)=0$ for $x\le \alpha_p$ and $h_p(x)=1$ for $x\ge\beta_p$. 
 
Further, let $Q_e$ be the set of all external points $p\in L$ with $|q^{-1}(p)|>1$. If $p$ is isolated in $[p,\to)$, let $h_p$ be the characteristic function of $(p,\to)$. Otherwise let $h_p$ be the characteristic function of $[p,\to)$.

Now we are going to define a projection of $C(K)$ onto $q^*C(L)$. 
To do it note first that the functions from $C(K)$ which are constant on $[p^-,p^+]$ for each $p\in Q\cup Q_e$ with finitely many exceptions are dense in $C(K)$. So it is sufficient to define the projection on them.
Any function of this kind can be uniquely written in the form
$$f=g+\sum_{j=1}^k c_j g_j,$$
where $g\in q^*C(L)$ and for each $j=1,\dots,k$, $c_j$ is a real number and $g_j\in C(K)$ is such that $g_j(x)=0$ for $x\le p_j^-$ and $g_j(x)=1$ for $x\ge p_j^+$, where $p_1<p_2<\dots<p_k$. If $f$ has this form, set
$$Pf=g+ \sum_{j=1}^k c_j h_{p_j}\cmp q.$$ 
It is easy to see that $P$ defines a linear projection onto $q^*C(L)$. It remains to compute its norm. To this end note that
$$Pf=f+\sum_{j=1}^k c_j (h_{p_j}\cmp q-g_j).$$
Fix any $j\in\{1,\dots,k\}$. If $p_j\in Q$, we have
\begin{alignat*}{3}
h_{p_j}\cmp q(x)-g_j(x)&=0,& x&\in K\setminus(\alpha_{p_j}^+,\beta_{p_j}^-)\cup[p_j^-,p_j^+],\\
|h_{p_j}\cmp q(x)-g_j(x)|&\le 1,& \qquad x&\in[\alpha_{p_j}^+,\beta_{p_j}^-].
\end{alignat*}

If $p_j\in Q_e$, then
$$h_{p_j}\cmp q(x)-g_j(x)=0 \mbox{ for } x\in K\setminus[p_j^-,p_j^+].$$

Further note that $|c_j|=|f(p_j^+)-f(p_j^-)|\le 2\|f\|$.

Now we are ready to estimate the norm of $Pf$. 
Let $x\in K$ be arbitrary. If $x$ does not belong to $[p_j^-,p_j^+]$ for any $j\in\{1,\dots,k\}$, then
\begin{align*}
|Pf(x)|&\le|f(x)|+\sum\{|c_j|: j\in\{1,\dots,k\}, p_j\in Q,  x\in[\alpha_{p_j}^+,\beta_{p_j}^-]\}\\ &\le 
|f(x)|+2(n+1)\|f\|\le(2n+3)\|f\|,\end{align*}
where we used the facts that $\|c_j\|\le 2\|f\|$ and that $x$ can belong to $[\alpha_{p_j}^+,\beta_{p_j}^-]$ only for at most $n+1$ different values for $j$ ($[\alpha_p,\beta_p]$ are pairwise disjoint if the value $\io(p,Q)$ is fixed
and there are only $n+1$ possible values of $\io(p,Q)$).

Next suppose that $x\in[p_j^-,p_j^+]$ for some $j\in\{1,\dots,k\}$. 
There are several possibilities:

(i) $p_j\in Q_e$, $p_j$ is isolated in $[p_j,\to)$. Then $Pf(x)=Pf(p_j^+)$ and
$h_{p_j}\cmp q(p_j^+)-g_j(p_j^+)=0$, therefore
$$|Pf(x)|=|Pf(p_j^+)|\le |f(p_j^+)|+\sum\{|c_i|: i\in\{1,\dots,k\}, p_i\in Q, x\in[\alpha_{p_i}^+,\beta_{p_i}^-]\}\le (2n+3)\|f\|$$

(ii) $p_j\in Q_e$, $p_j$ is not isolated in $[p_j,\to)$. Then $Pf(x)=Pf(p_j^-)$ and $h_{p_j}\cmp q(p_j^-)-g_j(p_j^-)=0$, therefore again
$$|Pf(x)|=|Pf(p_j^+)|\le(2n+3)\|f\|.$$

(iii) $p_j\in Q$, $p_j=\beta_{p_j}$. Then $Pf(x)=Pf(\beta_{p_j}^-)$, hence
$$|Pf(x)|=|Pf(\beta_{p_j}^-)|\le |f(\beta_{p_j}^-)|+\sum\{|c_i|: i\in\{1,\dots,k\}, p_i\in Q, x\in[\alpha_{p_i}^+,\beta_{p_i}^-]\}\le (2n+3)\|f\|$$

(iv) $p_j\in Q$, $p_j=\alpha_{p_j}$. Then similarly
$$|Pf(x)|=|Pf(\alpha_{p_j}^+)|\le |f(\alpha_{p_j}^+)|+\sum\{|c_i|: i\in\{1,\dots,k\}, p_i\in Q, x\in[\alpha_{p_i}^+,\beta_{p_i}^-]\}\le (2n+3)\|f\|$$

Hence we have proved that $\|P\|\le (2n+3)$ which was to be shown.
\end{proof}

\section{Typical metrizable quotients of compact lines}\label{typq}

Let $K$ be a compact space. By Lemma~\ref{typical_is_quotient} we know that typical separable subspace of $C(K)$ is of the form $q^*C(L)$ where $L$ is a metrizable compact space and $q:K\to L$ is a continuous surjection. 

We will extend and modify our terminology in a natural way. By $\Qyuc_\omega(K)$ we will denote, following \cite{AviKal} the set of all metrizable  quotients of $K$. We can view any such quotient in three ways: As a continuous surjection $q:K\to L$, as the respective subset $q^*C(L)$ of $C(K)$ or as the respective equivalence relation on $K$ define by $x\sim y$ if and only if $q(x)=q(y)$. We will identify two surjections generating the same equivalence relation, equivalently defining the same subset of $C(K)$. In other words, $q_1:K\to L_1$ and $q_2:K\to L_2$ will be identified if there is a homeomorphism $h:L_1\to L_2$ such that $q_2=h\cmp q_1$. 

We can define on $\Qyuc_\omega(K)$ a natural partial order (see \cite{Shchepin} or \cite{AviKal}): $q_1\preceq q_2$ if and only if $q_1^*C(\img{q_1}{K})\subs q_2^*C(\img{q_2}{K})$ if and only if the equivalence defined by $q_2$ is contained in the equivalence defined by $q_1$.
This order makes the set $\Qyuc_\omega(K)$ a $\sigma$-complete semilattice (cf \cite[Section 2]{AviKal}).
Remark that if $q_n$, $n\in\Nat$, are elements of $\Qyuc_\omega(K)$, then their supremum can be described either by the closed algebra generated by $\bigcup_n q_n^*C(\img{q_n}{K})$ or by the equivalence relation which is the intersection of  the equivalence relations defined by all the $q_n$.

We will say shortly that {\em typical quotient of $K$ has property $\Pee$} if 
there is a closed cofinal subset $\Qyuc\subs\Qyuc_\omega(K)$ such that each element of $\Qyuc$ has property $\Pee$. It is  equivalent to say that
typical separable subspace of $C(K)$ is of the form $q^*C(L)$ where $q$ has property $\Pee$.

\subsection{Basic facts on order preserving quotients}

Suppose that $K$ is a compact line. Note that $q\in\Qyuc_\omega(K)$ is order preserving if and only if the respective equivalence classes (i.e., the fibers of $q$) are closed intervals. More precisely, if we have an order preserving surjection onto a compact line, then the equivalence classes are clearly closed intervales.
Conversely, if we have an equivalence relation on $K$ whose equivalence classes are closed intervals, then on the quotient we can define a natural compatible order making the quotient map order preserving.

This observation will be used in the proof of the following lemma, which implies the second part of Lemma~\ref{typical_is_quotient}.

\begin{lm}\label{typical_is_increasing} Let $K$ be a compact line. Then typical quotient of $K$ is order preserving.
\end{lm}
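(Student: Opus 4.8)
The plan is to show that the order-preserving metrizable quotients constitute a closed cofinal subset of $\Qyuc_\omega(K)$; by definition this is exactly the statement that a typical quotient of $K$ is order preserving. By the observation recorded just before the lemma, a quotient $q\in\Qyuc_\omega(K)$ is order preserving precisely when all the classes of its associated equivalence relation $\sim_q$ (equivalently, all fibers $q^{-1}(x)$) are closed intervals. The single external fact I shall rely on is that the continuous non-decreasing functions separate the points of $K$; this is the monotone version of Urysohn's lemma, classical for compact lines (cf.~\cite{K2006}). By the Stone--Weierstrass theorem it follows that the continuous non-decreasing functions generate $C(K)$ as a closed subalgebra.

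To prove cofinality, fix $q\colon K\to L$ in $\Qyuc_\omega(K)$. Since $L$ is metrizable, $q^*C(L)$ is separable, hence is the closed subalgebra generated by countably many functions $f_n=g_n\cmp q$. Each $f_n$ lies in the closed subalgebra generated by all continuous non-decreasing functions, so $f_n$ is a limit of polynomials in non-decreasing functions, each using only finitely many of them. Collecting these over all $n$ yields a single countable family $\seq{h_k}$ of continuous non-decreasing functions whose generated closed subalgebra $B$ contains every $f_n$ and therefore contains $q^*C(L)$. Let $q'\in\Qyuc_\omega(K)$ be the quotient with $q'^*C(\img{q'}{K})=B$; it is metrizable because $B$ is separable, and $q\preceq q'$. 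Finally, two points are identified by $q'$ iff all the $h_k$ agree on them, so each fiber of $q'$ has the form $\bigcap_k h_k^{-1}(c_k)$. As every level set of a continuous non-decreasing function is a closed interval, this fiber is an intersection of closed intervals sharing a common point, hence a closed interval; thus $q'$ is order preserving.

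To prove closedness, let $q_1\preceq q_2\preceq\cdots$ be order-preserving elements of $\Qyuc_\omega(K)$ and put $q_\infty=\sup_n q_n$. By the description of suprema recalled before the lemma, $\sim_{q_\infty}=\bigcap_n\sim_{q_n}$, so the $\sim_{q_\infty}$-class of a point $x$ is the intersection of the $\sim_{q_n}$-classes of $x$. These classes are closed intervals containing $x$ and are nested decreasing (because $q_n\preceq q_{n+1}$), and a nested intersection of closed intervals is again a closed interval. Hence every class of $q_\infty$ is a closed interval and $q_\infty$ is order preserving.

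The one genuinely non-formal ingredient is the monotone separation of points; once it is granted, cofinality is a routine Stone--Weierstrass packaging and closedness is the elementary remark that intervals are stable under intersection. I expect the main obstacle to be precisely the verification (or correct citation) that continuous non-decreasing functions separate points of a compact line, together with the attendant care that the refinement $q'$ constructed above is genuinely metrizable --- which is why I build it from a countable family of monotone functions rather than by naively splitting each fiber of $q$ into its convex components, an operation that may produce uncountably many classes and destroy metrizability.
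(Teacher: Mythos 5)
Your proof is correct and follows essentially the same route as the paper's: both arguments replace the generators of the given separable subalgebra by a countable family of monotone continuous functions (the paper cites the density of the linear span of order preserving functions from \cite{K2006}, while you derive the same conclusion from monotone point-separation plus Stone--Weierstrass and a limit-of-polynomials extraction), and then observe that the fibers of the resulting quotient, being intersections of level sets of monotone functions, are closed intervals. Your closedness argument via nested intersections of closed intervals is exactly the step the paper declares to follow ``easily from the preceding remarks,'' so nothing is missing.
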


\begin{proof} It follows easily from the preceding remarks that the set of all order preserving metrizable quotients is closed in $\Qyuc_\omega(K)$. It remains to prove that it is cofinal.

To this end fix any countable $C\subs C(K)$.  By \cite[Proposition 3.2]{K2006} the linear span of order preserving continuous functions in dense in $C(K)$. So, there is a countable set $C_1$ of order preserving continuous functions such that $C\subs\overline{\Sp C_1}$. Let $S$ be the closed algebra (closed also to taking complex conjugate) generated by $C_1$ and constant functions.

We define an equivalence $\sim_S$ on $K$ such that $x\sim_S y$ if and only if $f(x)=f(y)$ for each $f\in S$. It is clear that $x\sim_S y$ if and only if $f(x)=f(y)$ for each $f\in C_1$. Therefore the equivalence classes are closed intervals. It follows that $S=q^* C(L)$ for an order preserving continuous surjection $q$. This completes the proof.
\end{proof}

The set of all order preserving metrizable quotients of $K$ we will denote by $\Qyuc_\omega^o(K)$. The following lemma is an important tool to generate 
nontrivial order preserving metrizable quotients.

\begin{lm}\label{o-urysohn} Let $K$ be a compact line. Let $a,b\in K$ such that $a\le b$ and the interval $[a,b]$ is $G_\delta$ in $K$. Then there is $q\in\Qyuc_\omega^o(K)$ such that $[a,b]$ is one of the fibers of $q$ (i.e., of the respective equivalence classes).
\end{lm}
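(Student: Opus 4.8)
The plan is to reduce the statement to the construction of a single order preserving continuous real function. Indeed, if I can produce a non-decreasing continuous $f\colon K\to[0,1]$ whose fiber $f^{-1}(\tfrac12)$ equals exactly $[a,b]$, then I am done: setting $L=f[K]\subseteq[0,1]$ and corestricting $f$ to $L$ gives a continuous order preserving surjection $q\colon K\to L$ onto a metrizable compact line (a compact subset of the real line), and the equivalence classes of $q$ are precisely the fibers $f^{-1}(c)$, each of which is a closed interval since $f$ is non-decreasing. By the basic facts on order preserving quotients recalled above, $q\in\Qyuc_\omega^o(K)$, and $[a,b]=f^{-1}(\tfrac12)$ is one of its fibers, as required.

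So everything reduces to building such an $f$. First I would use that $K$, being a compact line, is compact Hausdorff and hence normal; since $[a,b]$ is a closed $G_\delta$ set, it is therefore a zero set, i.e.\ there is a continuous $g\colon K\to[0,1]$ with $g^{-1}(0)=[a,b]$ (write $[a,b]=\bigcap_n G_n$ with $G_n$ open, take by Urysohn functions $g_n$ vanishing on $[a,b]$ and equal to $1$ off $G_n$, and put $g=\sum_n 2^{-n}g_n$). The function $g$ is not monotone, so the heart of the proof is to turn it into a monotone function without destroying the property that it is positive exactly off $[a,b]$. For this I would regularise $g$ by running maxima and define $f$ piecewise by
$$f(x)=\tfrac12\Bigl(1-\max_{t\in[x,a]}g(t)\Bigr)\ (x\le a),\qquad f(x)=\tfrac12\ (a\le x\le b),\qquad f(x)=\tfrac12\Bigl(1+\max_{t\in[b,x]}g(t)\Bigr)\ (x\ge b).$$
The three formulas agree on the overlaps (they all give $\tfrac12$ at $a$ and at $b$, because $g(a)=g(b)=0$), so by the pasting lemma $f$ will be a well-defined continuous function $K\to[0,1]$.

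It then remains to check the three required properties, and here the only delicate point is the behaviour of the running maxima. Monotonicity is immediate: as $x$ increases the interval $[x,a]$ shrinks, so $\max_{[x,a]}g$ is non-increasing and the left branch is non-decreasing, the middle branch is constant, the right branch is non-decreasing symmetrically; moreover the left branch takes values in $[0,\tfrac12]$ and the right branch in $[\tfrac12,1]$, so $f$ is non-decreasing on all of $K$. For the fiber, if $x<a$ then $x\notin[a,b]$, hence $g(x)>0$ and $\max_{[x,a]}g\ge g(x)>0$, giving $f(x)<\tfrac12$; symmetrically $f(x)>\tfrac12$ for $x>b$; and $f\equiv\tfrac12$ on $[a,b]$, so $f^{-1}(\tfrac12)=[a,b]$ exactly. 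The hard part will be the continuity of the running maximum $x\mapsto\max_{[x,a]}g$: I expect to verify it by showing that both $\{x:\max_{[x,a]}g>\alpha\}$ and $\{x:\max_{[x,a]}g<\beta\}$ are order-convex open sets (of the form $(\leftarrow,\cdot)$ or $(\cdot,a]$), using continuity of $g$ together with the fact that in a linearly ordered space a value can be approached arbitrarily close to the right of a point only if $g$ itself does so, except across a jump of $K$, where the relevant interval is clopen. This is the step where the order structure of $K$ is genuinely used and the one I would write out most carefully; once it is in place, the pasting lemma and the elementary estimates above finish the proof.
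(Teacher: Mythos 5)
Your proof is correct, but it takes a genuinely different route from the paper's. The paper first reduces to the one-sided situation ($a=\min K$ or $b=\max K$), disposes of the case where $b$ is isolated in $[b,\to)$ with a characteristic function, and otherwise uses the $G_\delta$ hypothesis to extract a sequence $b_n\searrow b$ and sets $q=\sum_n 2^{-n}f_n$, where the $f_n$ are order preserving Urysohn-type functions supplied by \cite[Lemma 3.1]{K2006} vanishing on $(\leftarrow,b]$ and equal to $1$ on $[b_n,\to)$ --- so monotonicity is built in from the start. You instead use only classical normality to realize the closed $G_\delta$ set $[a,b]$ as the zero set of an arbitrary continuous $g$, and then monotonize $g$ by running maxima. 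The two arguments place the order-theoretic work in different spots: in the paper it sits inside the cited monotone Urysohn lemma, whereas in your version it sits in the continuity of $x\mapsto\max_{[x,a]}g$ on a compact line. You correctly flag this as the delicate step, and it does go through: the sub- and superlevel sets of the running maximum are order-convex, and their openness follows from compactness of $[x,a]$ (so the maximum is attained) together with continuity of $g$ at a point approached by a net of near-maximizers. Your route is somewhat more self-contained (no external reference, no case split on whether $b$ is isolated from the right, no reduction to one-sided intervals), at the price of having to prove the running-maximum continuity; the paper's route is shorter in context because \cite[Lemma 3.1]{K2006} is already used elsewhere, e.g. in Lemma~\ref{metrizable}.
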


\begin{proof} 
We will show that there is an order preserving continuous function $q:K\to \Err$ such that $q^{-1}(0)=[a,b]$. Then $q\in \Qyuc_\omega^o(K)$.

It is enough to prove it in case that either $a=\min K$ or $b=\max K$. Indeed,
if $q_1$ is such a function for $(\leftarrow,b]$ and $q_2$ is such a function for $[a,\to)$, we can take $q=q_1+q_2$.

Suppose that $a=\min K$ (the case $b=\max K$ is analogous). If $b$ is isolated in $[b,\to)$, let $q$ be the characteristic function of $(b,\to)$. Otherwise there is a sequence $b_n\in K$ such that $b_n\searrow b$ (due to the assumption that $[a,b]$ is $G_\delta$). By \cite[Lemma 3.1]{K2006} there is, for each $n\in\Nat$, an order preserving continuous function $f_n:K\to[0,1]$ such that
$f_n=0$ on $(\leftarrow,b]$ and $f_n=1$ on $[b_n,t\to)$. We conclude by setting
$q=\sum_{n=1}^\infty 2^{-n} f_n$.
\end{proof}

The behaviour of (typical) metrizable quotients describes the properties of the given compact space. We illustrate it by two propositions.

\begin{prop} Let $K$ be a compact line. Then $K$ is scattered if and only if $\img qK$ is countable for each $q\in\Qyuc_\omega^o(K)$.
\end{prop}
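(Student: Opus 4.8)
The plan is to prove the two implications separately. The forward direction is short and classical, while the converse carries essentially all the work, which is why I will phrase it contrapositively.

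For ``$K$ scattered $\implies\img qK$ countable for every $q\in\Qyuc_\omega^o(K)$'', fix such a $q$, say $\map qKL$ with $L$ metrizable. Since $L=\img qK$ is a continuous image of the scattered compactum $K$, it is itself scattered (continuous images of scattered compacta are scattered). A scattered compact metrizable space is countable (its Cantor--Bendixson derivatives strictly decrease, each level is countable, and the process stops at a countable ordinal), so $\img qK$ is countable. For a completely order-theoretic argument one may instead note that an uncountable metrizable compact line contains a Cantor set $R$; choosing the left endpoint of each fibre $q^{-1}(r)$, $r\in R$, yields a crowded subset of $K$ whose closure is a nonempty perfect set, contradicting scatteredness.

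For the converse I assume $K$ is \emph{not} scattered and produce $q\in\Qyuc_\omega^o(K)$ with $\img qK$ uncountable. Then $K$ contains a nonempty perfect set, so I can start a Cantor scheme: I build nonempty perfect sets $P_s\subseteq K$ indexed by $s\in 2^{<\omega}$ with $P_{s0},P_{s1}\subseteq P_s$, and intervals $I_s=[\min P_s,\max P_s]$, arranged so that $\max P_{s0}<\min P_{s1}$ (hence $I_{s0}$ lies strictly below $I_{s1}$ inside $I_s$). The only nontrivial point is the splitting step, which I isolate as the following claim: \emph{if $R$ is a non-scattered compact line, there is $c\in R$ with $\min R<c<\max R$ such that both $R\cap(\leftarrow,c]$ and $R\cap[c,\to)$ are non-scattered.} Granting this, I apply it to $R=P_s$, pick nonempty perfect $P_{s0}\subseteq P_s\cap(\leftarrow,c]$ and $P_{s1}\subseteq P_s\cap(c,\to)$ (the latter is non-scattered since removing the single point $c$ from a non-scattered set preserves non-scatteredness), and set $b_{s0}=\max P_{s0}\le c<\min P_{s1}=a_{s1}$.

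To prove the splitting claim I expect to do the main work, and it is the chief obstacle. Let $D=\setof{c\in R}{R\cap(\leftarrow,c]\text{ is scattered}}$ and $E=\setof{c\in R}{R\cap[c,\to)\text{ is scattered}}$; these are, respectively, an initial and a final segment, with $\min R\in D$ and $\max R\in E$, so any $c\in R\setminus(D\cup E)$ is interior and works. It therefore suffices to show $D\cup E\ne R$. Suppose $R=D\cup E$ and put $c_0=\sup D$. The key sub-lemma is that if every proper final segment $R\cap[c,\to)$ with $c>c_0$ is scattered, then $R\cap(c_0,\to)$ is scattered; this is proved by a perfect-kernel argument, namely the perfect kernel $\widehat P$ of $R\cap(c_0,\to)$ meets each such $[c,\to)$ in a closed subset of $\widehat P$ whose only possible isolated point is its minimum, so that set is scattered only if it has at most one point, forcing $\widehat P$ to have at most one point and hence to be empty. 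The symmetric statement handles $R\cap(\leftarrow,c_0)$. Then $R=(R\cap(\leftarrow,c_0))\cup\{c_0\}\cup(R\cap(c_0,\to))$ is a union of three scattered sets, hence scattered, contradicting the hypothesis.

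Finally I assemble the quotient. For each $s$ the strict inequality $b_{s0}<a_{s1}$ lets me invoke the order Urysohn lemma \cite[Lemma 3.1]{K2006} to obtain an order preserving continuous $\map{h_s}K{[0,1]}$ with $h_s=0$ on $(\leftarrow,b_{s0}]$ and $h_s=1$ on $[a_{s1},\to)$; thus $h_s=0$ on $I_{s0}$ and $h_s=1$ on $I_{s1}$. The countable family $\setof{h_s}{s\in 2^{<\omega}}$ generates a quotient $\map qKL$ which is metrizable (it is the supremum of countably many metrizable quotients, equivalently the algebra it generates with the constants is separable) and order preserving (its classes are intersections of level sets of increasing functions, hence convex), so $q\in\Qyuc_\omega^o(K)$. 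To see $\img qK=L$ is uncountable, note that for $\sigma\in 2^\omega$ the nested nonempty closed intervals $I_{\sigma\rest n}$ have a common point $x_\sigma$ by compactness; if $\sigma\ne\tau$ and $n$ is least with $\sigma(n)\ne\tau(n)$, writing $s=\sigma\rest n=\tau\rest n$ one of $h_s(x_\sigma),h_s(x_\tau)$ is $0$ and the other $1$, so $q(x_\sigma)\ne q(x_\tau)$. Hence $\img qK$ has at least $2^{\aleph_0}$ points, completing the contrapositive and the proof. The splitting claim of the third paragraph is the real difficulty; once it is in place, everything else is routine assembly from the order Urysohn lemma and the remarks on countable suprema of metrizable quotients.
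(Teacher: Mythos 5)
Your argument is correct in substance but takes a genuinely different, and much longer, road than the paper. The paper disposes of the converse in two lines: if $K$ is not scattered it admits a continuous surjection onto $[0,1]$ (a classical fact), this surjection lies in $\Qyuc_\omega(K)$, and Lemma~\ref{typical_is_increasing} (typical quotients are order preserving) supplies $q'\in\Qyuc_\omega^o(K)$ dominating it, whose image still maps onto $[0,1]$ and is therefore uncountable. You instead build the uncountable order preserving quotient by hand: a Cantor scheme of perfect subsets separated by strict order gaps, the order Urysohn lemma from \cite{K2006} to turn each gap into an increasing function, and a countable supremum to assemble $q$. In effect you are reproving, in the linearly ordered setting, the fact that non-scattered compacta map onto $[0,1]$, with your splitting claim carrying the load that the classical fact plus Lemma~\ref{typical_is_increasing} carry in the paper; what your route buys is self-containedness (you never need the density of increasing functions in $C(K)$), at the price of considerable extra work. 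One small point to tighten: when you ``pick nonempty perfect $P_{s1}\subseteq P_s\cap(c,\to)$'', the closure in $K$ of a crowded subset of $P_s\cap(c,\to)$ is only guaranteed to lie in $P_s\cap[c,\to)$ and may contain $c$, so $\min P_{s1}>c$ is not automatic. This is repaired by the very sub-lemma you proved: if $\widehat P\cap[d,\to)$ were scattered for every $d>c$ then $\widehat P$ itself would be scattered, so there is some $d>c$ with $P_s\cap[d,\to)$ non-scattered, and you take $P_{s1}$ to be its perfect kernel. With that adjustment the proof is complete.
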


\begin{proof} If $K$ is scattered, then each continuous image is also scattered.
Moreover, scattered metrizable compact spaces are countable. This proves the `only if' part. To show the `if' part we can use the well-known fact that any non-scattered compact space maps continuously onto $[0,1]$. Fix such a surjection
$q:K\to[0,1]$. By Lemma~\ref{typical_is_increasing} there is a $q'\in\Qyuc_\omega^o(K)$ such that $q\preceq q'$. Clearly $\img{q'}{K}$ is uncountable.
\end{proof}

\begin{prop}\label{aleph0monolitic} Let $K$ be a compact line.
Then $K$ is is  $\aleph_0$-monolithic if and only if 
for all $q\in\Qyuc_\omega^o$ the set $\setof{y\in \img qK}{|q^{-1}(y)|>1}$ is countable.

Moreover, if $K$ is not $\aleph_0$-monolithic, there is $q\in\Qyuc_\omega^o(K)$ such that for each $q'\in\Qyuc_\omega^o(K)$ with $q\preceq q'$ the respective set is uncountable.
\end{prop}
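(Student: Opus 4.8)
The plan is to prove the main equivalence through its two contrapositives, noting that the \emph{moreover} part is exactly the contrapositive of the harder implication. Throughout I will use the elementary fact that a metrizable (equivalently, second countable) compact line has only countably many jumps: if $p\prec p'$ are adjacent then $[p',\to)=(p,\to)$ is open, so one may pick a member $B$ of a fixed countable base with $p'\in B\subs[p',\to)$, and the assignment of such a $B$ to the jump is injective (if two jumps shared the same $B$, the larger successor would exclude the smaller one from $B$). I will also use that, for a compact line, $K$ fails to be $\aleph_0$-monolithic exactly when it has a separable non-metrizable closed subspace, since the closure of a countable set is separable.

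For the implication ``$\aleph_0$-monolithic $\implies$ all the sets are countable'' I argue contrapositively. Suppose $q\in\Qyuc_\omega^o(K)$ has $\img qK=L$ metrizable while $B=\setof{y\in L}{|q^{-1}(y)|>1}$ is uncountable; write $q^{-1}(y)=[y^-,y^+]$. Let $R=\setof{y^-}{y\in L}\cup\setof{y^+}{y\in L}$ be the set of endpoints of fibres. Then $K\setminus R=\bigcup_{y\in B}(y^-,y^+)$ is open, so $R$ is closed. The restriction $q\rest R\colon R\to L$ is an increasing surjection all of whose fibres $\{y^-,y^+\}$ have at most two points, and it is two-to-one precisely over $B$; in particular each pair $y^-\prec y^+$ with $y\in B$ is a jump of $R$. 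Since $B$ is uncountable, $R$ has uncountably many jumps and is therefore non-metrizable. On the other hand $R$ is separable: if $D$ is countable and dense in $L$ then $(q\rest R)^{-1}(D)$ is countable and meets every nonempty relatively open interval $(a,b)_R$, because such an interval forces $q(a)<q(b)$ (if $q(a)=q(b)$ then $[a,b]_R$ would lie in a single fibre of $q\rest R$, which has at most two points, yet $(a,b)_R\nnempty$ makes $[a,b]_R$ have at least three points), so choosing $e\in D$ with $q(a)<e<q(b)$ we get $(q\rest R)^{-1}(e)\subs(a,b)_R$. Thus $R$ is a separable non-metrizable closed subspace of $K$, so $K$ is not $\aleph_0$-monolithic.

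For the remaining implication, together with the \emph{moreover} statement, assume $K$ is not $\aleph_0$-monolithic and fix a separable non-metrizable closed subspace $K_0$. As in the proof of Lemma~\ref{ldblarrownonmon}, the relation $x\sim y\iff[x,y]$ is non-metrizable has convex closed classes, and the quotient map $\pi\colon K_0\to[0,1]$ is an increasing continuous surjection whose nondegenerate classes $C_t$ are exactly the non-metrizable ones; these are indexed by an uncountable set $A$ (the set over which the double arrow of Lemma~\ref{ldblarrownonmon} is two-to-one). Every jump of $K_0$ is collapsed by $\pi$ (otherwise $[0,1]$ would acquire a jump), so the monotone extension $q(x)=\sup\setof{\pi(d)}{d\in K_0,\ d\le x}$ (with $\sup\emptyset=0$) is a well-defined increasing continuous surjection $q\colon K\to[0,1]$, i.e. $q\in\Qyuc_\omega^o(K)$, with $q^{-1}(t)\sups C_t$ non-metrizable for each $t\in A$. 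Now let $q'\in\Qyuc_\omega^o(K)$ satisfy $q\preceq q'$, so $q=g\cmp q'$ for an increasing continuous $g\colon\img{q'}K\to[0,1]$ and $\img{q'}K$ is metrizable. For $t\in A$ the map $q'$ restricted to the non-metrizable compactum $q^{-1}(t)$ cannot be injective, since otherwise $q^{-1}(t)$ would embed into the metrizable space $\img{q'}K$; hence $q'$ identifies two distinct points of $q^{-1}(t)$, producing $z_t\in\img{q'}K$ with $|(q')^{-1}(z_t)|>1$ and $g(z_t)=t$. As the $z_t$ are distinct for distinct $t\in A$, the set $\setof{z\in\img{q'}K}{|(q')^{-1}(z)|>1}$ is uncountable. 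This proves the \emph{moreover} part, and taking $q'=q$ gives the implication.

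The main obstacle is the \emph{moreover} part: one must produce a single quotient whose relevant set remains uncountable under every refinement. The device that makes this work is to collapse each non-metrizable $\sim$-class to a point, so that the offending fibres are themselves non-metrizable; non-metrizability is then inherited by any metrizable refinement as a genuine identification, which is exactly what forces uncountably many nontrivial fibres to persist. The two supporting technical points—that the monotone extension of $\pi$ to all of $K$ is continuous (the only candidates for discontinuity lie in gaps of $K_0$, i.e.\ at jumps of $K_0$, where $\pi$ is constant) and that the endpoint set $R$ is separable—are routine once the jump-counting fact is in hand.
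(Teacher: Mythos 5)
The first implication of your proof is, in substance, the paper's own argument run contrapositively: your set $R$ of fibre endpoints is exactly the set $K_1=K\setminus\bigcup_{y}(y^-,y^+)$ that the paper uses, and the jump-counting observation does the same work as metrizability of $K_1$. There is, however, a small gap in your density argument for $R$: if $q(a)$ and $q(b)$ happen to be adjacent in $L$ (a jump of $L$), there is no $e\in D$ with $q(a)<e<q(b)$, and yet $(a,b)_R$ can be the nonempty set $\{q(a)^+,q(b)^-\}$, which $(q\rest R)^{-1}(D)$ may entirely miss. This is easily repaired by enlarging $D$ to include the endpoints of the countably many jumps of $L$ and taking both fibre endpoints over those points --- the jump-counting fact you proved at the outset already gives countability.

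The second implication and the \emph{moreover} part contain a genuine gap. You take the relation of Lemma~\ref{ldblarrownonmon} literally as ``$x\sim y$ iff $[x,y]$ is non-metrizable,'' but that relation is not an equivalence: in $D([0,1])$ one has $(a,0)\sim(b,0)$ and $(b,0)\sim(a,1)$ for $b>a$, while $[(a,0),(a,1)]$ is a metrizable doubleton, so transitivity fails. The relation that actually makes the paper's proof of that lemma work is ``$[x,y]$ is \emph{metrizable},'' and its nondegenerate classes are maximal metrizable intervals --- not non-metrizable sets. More importantly, no repair can deliver what your argument needs: in a separable compact line $K_0$, a non-metrizable closed interval $[u,v]$ must have $(u,v)\nnempty$ (a two-point interval is metrizable), so pairwise disjoint non-metrizable fibres would form an uncountable family of intervals with nonempty interiors, contradicting the countable chain condition that separability implies. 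Hence uncountably many non-metrizable fibres $C_t$ simply cannot exist, and the key step ``$q'$ restricted to the non-metrizable compactum $q^{-1}(t)$ cannot be injective'' has nothing to apply to. Concretely, for $K=D([0,1])$ your $q$ is the canonical two-to-one map onto $[0,1]$, every fibre is a metrizable doubleton, and a refinement $q'$ is perfectly free to split any individual fibre; what forces uncountably many fibres to survive is a global counting argument, not local non-metrizability. The paper supplies exactly this: it arranges $q$ so that each point of a countable dense subset $C$ of the non-metrizable closure is a singleton fibre; then every nontrivial fibre of any $q'\succeq q$ is a doubleton $\{y^-,y^+\}$ with $(y^-,y^+)=\emptyset$, hence metrizable, and if only countably many such fibres existed, Lemma~\ref{ctble-metr} would force $\overline{C}$ to be metrizable, a contradiction. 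Some version of this counting step is what your proof is missing.
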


To prove this proposition we will need the following lemmata.

\begin{lm}\label{metrizable} Let $K$ be a metrizable compact line. Then $K$ is order-homeomorphic to a subset of $[0,1]$.
\end{lm}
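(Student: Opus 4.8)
The plan is to embed $K$ into $[0,1]$ by producing a single order-preserving continuous injection $\Phi\colon K\to[0,1]$. Since $K$ is compact and $[0,1]$ is Hausdorff, such a $\Phi$ is automatically a homeomorphism onto its image, and being order-preserving and injective on a linear order it is an order isomorphism onto $\Phi(K)$; this is exactly the desired order-homeomorphism. To build $\Phi$ I would first produce a countable family $\{g_n\}$ of order-preserving continuous functions $K\to[0,1]$ which separates points in the order sense (for every $x<y$ there is $n$ with $g_n(x)<g_n(y)$), and then set $\Phi=\sum_{n\ge1}2^{-n}g_n$. The series converges uniformly, so $\Phi$ is continuous; it maps into $[0,1]$ since $\sum_{n\ge1}2^{-n}=1$; it is order-preserving as a sum of order-preserving functions; and it is injective, because for $x<y$ every term satisfies $g_n(x)\le g_n(y)$ while the separating term is strict, whence $\Phi(x)<\Phi(y)$.

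For the separating family I would use that a metrizable compact space is second countable, hence separable, and fix a countable dense set $D\subs K$. I would enlarge $D$ so that it contains one endpoint of every jump of $K$, a jump being a pair $a<b$ with $(a,b)=\emptyset$. This keeps $D$ countable, since a second countable compact line has only countably many jumps: to each jump $(a,b)$ assign a basic open set $U$ with $b\in U\subs[b,\to)$, so that $\min U=b$; the assignment is injective because $b$, and hence its immediate predecessor $a$, is recovered from $U$. Now for each $d\in D$ the singleton $\{d\}=[d,d]$ is closed, hence $G_\delta$ in the metrizable space $K$, so Lemma~\ref{o-urysohn} (more precisely the real-valued function constructed in its proof) yields an order-preserving continuous $q_d\colon K\to\Err$ with $q_d^{-1}(0)=\{d\}$. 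Order-preservation together with $q_d^{-1}(0)=\{d\}$ forces $q_d(x)<0$ for $x<d$ and $q_d(x)>0$ for $x>d$; rescaling $q_d$ increasingly into $[0,1]$ (it is bounded, being continuous on a compact space) gives $g_d\colon K\to[0,1]$ order-preserving and continuous with $g_d(x)<g_d(d)$ for all $x<d$ and $g_d(y)>g_d(d)$ for all $y>d$.

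It then remains to verify that $\{g_d:d\in D\}$ separates. Given $x<y$ I would locate a suitable $d\in D$: if $(x,y)\nnempty$, density gives $d\in D\cap(x,y)$ and then $g_d(x)<g_d(d)<g_d(y)$; if $(x,y)=\emptyset$ then $x<y$ is a jump, so $x\in D$ or $y\in D$, and $g_x(x)<g_x(y)$ (respectively $g_y(x)<g_y(y)$) by the strict monotonicity recorded above. Thus the family separates, $\Phi$ is an order-preserving continuous injection, and the opening remarks complete the proof. The only genuine content here is the existence of order-preserving (monotone) Urysohn functions; this is precisely what Lemma~\ref{o-urysohn} supplies once one notices that singletons are $G_\delta$ in a metrizable space, so I expect that invocation, rather than any of the routine verifications (countability of the jumps, passage from separation to injectivity, and compactness turning a continuous bijection into a homeomorphism), to be the crux.
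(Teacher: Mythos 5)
Your proof is correct and follows essentially the same route as the paper: both build a countable family of order-preserving continuous $[0,1]$-valued functions that strictly separates points and take a $\sum 2^{-n}g_n$ sum, the only difference being that the paper indexes the family by a countable basis of open intervals (citing the monotone Urysohn lemma of \cite{K2006} directly) while you index it by a countable dense set augmented with jump endpoints and invoke Lemma~\ref{o-urysohn}. Your explicit treatment of jumps is a welcome bit of extra care, but it does not change the substance of the argument.
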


\begin{proof} Let $U_n$, $n\in\Nat$ form a countable basis of $K$ consisting of open intervals. By \cite[Lemma 3.1]{K2006} there is, for each $n\in\Nat$, an order preserving continuous function $f_n:K\to[0,1]$ such that $f_n(x)=0$ for $x<U_n$ and $f_n(x)=1$ for $x>U_n$. Set $f=\sum_{n=1}^\infty 2^{-n} f_n$. Then $f$ is a continuous strictly increasing function from $K$ to $[0,1]$. This completes the proof.
\end{proof}

\begin{lm}\label{op-extension} Let $K$ be a compact line and $L\subs K$ be a closed subset.
Then for each $q\in \Qyuc_\omega^o(L)$ there is $q'\in\Qyuc_\omega^o(K)$ such that $q'\rest L=q$.
\end{lm}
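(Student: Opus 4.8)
The plan is to reduce the statement to a monotone Tietze-type extension theorem for real-valued functions and then to build the extension by filling in the gaps of $L$ one convex component at a time.

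First I would use Lemma~\ref{metrizable} to replace the abstract quotient $q$ by a function. Writing $q\colon L\to M$ with $M$ a metrizable compact line, Lemma~\ref{metrizable} gives an order embedding $\iota\colon M\to[0,1]$, so that $f:=\iota\cmp q\colon L\to[0,1]$ is an order preserving continuous function with $f(x)=f(y)\iff q(x)=q(y)$. The key observation is that it suffices to find an order preserving continuous $F\colon K\to[0,1]$ with $F\rest L=f$: its fibers $F^{-1}(t)$ are closed intervals, so by the remarks preceding Lemma~\ref{typical_is_increasing} the map $F$ induces some $q'\in\Qyuc_\omega^o(K)$, and for $x,y\in L$ one has $F(x)=F(y)\iff f(x)=f(y)\iff q(x)=q(y)$, i.e.\ $q'\rest L=q$. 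Thus the whole problem becomes: extend the monotone continuous $f$ on the closed subline $L$ to a monotone continuous function on $K$.

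Next I would construct $F$ explicitly. Since $L$ is compact it has a least and a greatest element; set $F\equiv f(\min L)$ on $(\leftarrow,\min L)$ and $F\equiv f(\max L)$ on $(\max L,\to)$, and $F=f$ on $L$. The complement $K\setminus L$ is open, hence a disjoint union of maximal convex components, and (because $L$ is closed) each remaining component is of the form $(a,b)$ with $a,b\in L$. On such a component, if $f(a)=f(b)$ I would put $F$ constant; if $f(a)<f(b)$ I would interpolate by a ramp $F=f(a)+(f(b)-f(a))h$, where $h\colon[a,b]\to[0,1]$ is an order preserving continuous function with $h(a)=0$ and $h(b)=1$, furnished by the monotone version of Urysohn's lemma for compact lines (cf.\ \cite[Lemma 3.1]{K2006}, already used above). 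By construction $F$ is non-decreasing and restricts to $f$ on $L$.

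The hard part will be checking that $F$ is continuous, in particular at points $x_0\in L$ that are accumulated by infinitely many gaps. Continuity inside a component, and below $\min L$ or above $\max L$, is immediate. At a point of $L$ I would argue from monotonicity together with the continuity of $f$ on $L$: approaching $x_0$ from, say, the right, any component met lies in $(x_0,c)$ and has both of its endpoints in $L\cap(x_0,c)$, so by continuity of $f$ at $x_0$ the endpoint values — and hence the whole ramp on that component — lie within $\eps$ of $f(x_0)$ once $c$ is close enough; since $F$ is monotone this squeezes $F(x)\to f(x_0)$. Treating the left approach symmetrically (using that neighbourhoods in a compact line split into half-intervals) yields continuity of $F$ everywhere, and feeding $F$ back through the reduction of the first step completes the proof.
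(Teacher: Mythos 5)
Your proof is correct and follows essentially the same route as the paper: the paper likewise reduces via Lemma~\ref{metrizable} to extending an order preserving continuous map $L\to[0,1]$ and then simply cites the gap-filling construction from the proof of \cite[Lemma 4.2]{K2006}, which is exactly the constant/ramp interpolation you write out. One tiny point: when $x_0\in L$ is itself the left endpoint of a gap $(x_0,b)$, your ``both endpoints of the component lie within $\eps$'' argument does not apply, but continuity from the right at such $x_0$ follows directly from $h(x_0)=0$ for the ramp on that single component.
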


\begin{proof} By Lemma~\ref{metrizable} we may suppose that $q$ is a continuous order preserving map of $L$ into $[0,1]$. This $q$ can be extended to a continuous order preserving map $q':K\to[0,1]$. The construction of such an extension is given in the proof of Lemma 4.2 of \cite{K2006}.
\end{proof}
\begin{lm}\label{ctble-metr} Let $K$ be a compact space and $q:K\to L$ a continuous surjection of $K$ onto a metrizable compact space $L$. Suppose that there are only countably many $y\in L$ with $|q^{-1}(y)|>1$ and, moreover, each $q^{-1}(y)$ is metrizable. Then $K$ is metrizable as well.
\end{lm}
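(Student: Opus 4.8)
The plan is to realize $K$ as a subspace of a compact metrizable space via a continuous injection; since $K$ is compact and the target Hausdorff, such an injection is automatically a homeomorphic embedding, which forces $K$ to be metrizable. Let $\setof{y_n}{\ntr}$ enumerate the countably many points of $L$ whose fiber is nondegenerate, and write $F_n=q^{-1}(y_n)$. Each $F_n$ is closed in $K$ (being a fiber of a map into the Hausdorff space $L$) and is compact metrizable by hypothesis, so it admits a continuous injection $g_n\colon F_n\to[0,1]^{\omega}$ into the Hilbert cube. Because $K$ is compact Hausdorff, hence normal, Tietze's theorem applied coordinatewise yields a continuous extension $\widetilde g_n\colon K\to[0,1]^{\omega}$ with $\widetilde g_n\rest F_n=g_n$.

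I would then form the diagonal map
$$\Phi\colon K\to L\times\prod_{\ntr}[0,1]^{\omega},\qquad \Phi(x)=\bigl(q(x),(\widetilde g_n(x))_{\ntr}\bigr).$$
This is exactly the step that uses the countability of the set of nondegenerate fibers: the target is a countable product of compact metrizable spaces, hence is itself compact and metrizable, and $\Phi$ is continuous since each coordinate is.

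The heart of the argument is the injectivity of $\Phi$. Suppose $\Phi(x)=\Phi(x')$. Comparing the first coordinate gives $q(x)=q(x')$, so $x$ and $x'$ lie in a common fiber. If that fiber is a singleton, then $x=x'$ and we are done. Otherwise it equals some $F_n$; comparing the $n$-th block then gives $g_n(x)=\widetilde g_n(x)=\widetilde g_n(x')=g_n(x')$, and injectivity of $g_n$ yields $x=x'$.

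Finally, $\Phi$ is a continuous injection of the compact space $K$ into a metrizable (in particular Hausdorff) space, hence a homeomorphism onto its image. Therefore $K$ is homeomorphic to a subspace of a metrizable space, so $K$ is metrizable, as required. The only genuinely delicate point is the injectivity step: it relies on the observation that two distinct points of $K$ can be identified by $\Phi$ only if they sit inside a single nondegenerate fiber, where the auxiliary embedding $g_n$ separates them; everything else (closedness of fibers, the Tietze extension, metrizability of the countable product, and the compactness argument) is routine.
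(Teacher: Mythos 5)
Your proof is correct and is essentially the paper's argument in different clothing: the paper separates points of $K$ by the countable family $\{g_n\cmp q\}$ together with Tietze extensions of countably many point-separating functions on each nondegenerate fiber, which is exactly your diagonal map $\Phi$ into $L\times\prod_n[0,1]^\omega$ read coordinatewise. No gaps.
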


\begin{proof} We will use the well-known fact that a compact space $K$ is metrizable if and only if there are countably many continuous functions which separate points of $K$. 

So let $g_n$, $n\in\Nat$, be continuous functions on $L$ which separate points of $L$. Further, for each $y\in L$ such that $|q^{-1}(y)|>1$ we can choose continuous functions $h'_{y,n}$, $n\in\Nat$, on $q^{-1}(y)$ which separate points of $q^{-1}(y)$. Further, let $h_{y,n}$ be a continuous extension of $h'_{y,n}$ defined on $K$ (it exists by Tietze's theorem). Then functions $h_{y,n}$ together with $g_n\cmp q$ do separate points of $K$. It follows that $K$ is metrizable.
\end{proof}

\begin{proof}[Proof of Proposition~\ref{aleph0monolitic}.] Suppose that $K$ is not $\aleph_0$-monolithic. Then there is a countable set $C\subs K$ such that $K_1=\overline{C}$ is not metrizable.
As $K_1$ is linearly ordered and separable, we conclude that $K_1$ is first countable. As each point of $C$ is $G_\delta$ in $K_1$, by Lemma~\ref{o-urysohn} we can find, for each $x\in C$, some $q_x\in \Qyuc_\omega^o(K_1)$ such that $q_x^{-1}(q_x(x))=\{x\}$. Let $q_1$ be the supremum of all $q_x$, $x\in C$. Then 
$q_1^{-1}(q_1(x))=\{x\}$ for each $x\in C$.  Now suppose that $y\in \img{q_1}{K_1}$ is such that $|q_1^{-1}(y)|>1$. Then $q_1^{-1}(y)$ is a closed interval, say $[y^-,y^+]$ with $y^-<y^+$.  As $q_1^{-1}(y)\cap C=\emptyset$ and $C$ is dense, we infer that
$(y^-,y^+)=\emptyset$. Hence $q_1^{-1}(y)=\{y^-,y^+\}$ and so it is metrizable.
It follows by Lemma~\ref{ctble-metr} that there must be uncountably many of such points $y$. By Lemma~\ref{op-extension} there is $q\in\Qyuc_\omega^o(K)$ such that $q\rest K_1=q_1$. Clearly $|q^{-1}(y)|>1$ for uncountably many $y\in q(K)$. Moreover, if $q'\in\Qyuc_\omega^o(K)$ is such that $q\preceq q'$, then
$(q')^{-1}(q'(x))=\{x\}$ for each $x\in C$. Therefore by the same reasoning we get that $|(q')^{-1}(y)|>1$ for uncountably many $y\in \img {q'}{K}$.

Now suppose that $K$ is $\aleph_0$-monolithic. Let $q\in\Qyuc_\omega^o(K)$ be arbitrary.
Set $L = \img qK$.
For each $y\in L$ denote by $[y^-,y^+]$ the inverse image $q^{-1}(y)$. Let $K_1=K\setminus\bigcup_{y\in Y}(y^-,y^+)$. Then $\img q{K_1}=L$ and $q\rest K_1$ is at most two-to-one.
Therefore by a claim in the proof of \cite[Proposition 3.4]{K2006} $K_1$ is separable. As $K$ is $\aleph_0$-monolithic, we deduce that $K_1$ is metrizable.
Therefore only for countably many $y\in L$ we can have $y^-<y^+$. This completes the proof.
\end{proof}

We also include the following lemma on lifting closed cofinal subsets. 

\begin{lm}\label{lift-cc} Let $K$ be a compact line and $L\subs K$ be a closed subset.
Suppose that $\See$ is a closed cofinal subset of $\Qyuc_\omega^o(L)$.
Then $\See'=\{q\in \Qyuc_\omega^o(K): q\rest L\in\See\}$ is closed and cofinal
in $\Qyuc_\omega^o(K)$.
\end{lm}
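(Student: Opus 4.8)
The plan is to reduce everything to the behaviour of the restriction map $\Phi\colon\Qyuc_\omega^o(K)\to\Qyuc_\omega^o(L)$, $\Phi(q)=q\rest L$, and its interaction with the semilattice structure. First I would check that $\Phi$ is well defined: if $q$ is an order preserving continuous surjection onto a metrizable compact line, then $q\rest L$ is an order preserving continuous surjection of $L$ onto the closed, hence metrizable, subspace $\img qL$, and the equivalence relation it induces on $L$ is exactly the restriction to $L\times L$ of the one induced by $q$. From this description two facts follow immediately and will be used repeatedly: $\Phi$ is monotone (if $q_1\preceq q_2$ then $q_1\rest L\preceq q_2\rest L$), and $\Phi$ preserves suprema of increasing sequences. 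The latter holds because, by the description of suprema in $\Qyuc_\omega^o$ recalled above, the equivalence relation of $\sup_n q_n$ is the intersection of the relations $E_n$ of the $q_n$, and intersecting with $L\times L$ commutes with this countable intersection, i.e. $\big(\bigcap_n E_n\big)\cap(L\times L)=\bigcap_n\big(E_n\cap(L\times L)\big)$; thus $(\sup_n q_n)\rest L=\sup_n(q_n\rest L)$.

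Granting this, closedness of $\See'$ is immediate: if $q_1\preceq q_2\preceq\cdots$ lie in $\See'$ with supremum $q$, then $(q_n\rest L)$ is increasing in $\See$ with supremum $q\rest L$ by the preservation of suprema, so $q\rest L\in\See$ because $\See$ is closed, i.e. $q\in\See'$.

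For cofinality the idea is a back-and-forth construction interleaving $\Qyuc_\omega^o(K)$ and $\See$. Given $q_0\in\Qyuc_\omega^o(K)$, I would build an increasing chain $q_0\preceq q_1\preceq\cdots$ in $\Qyuc_\omega^o(K)$ together with an increasing chain $(r_n)$ in $\See$, as follows. Having $q_n$, use cofinality of $\See$ in $\Qyuc_\omega^o(L)$ to pick $r_n\in\See$ with $q_n\rest L\preceq r_n$; then invoke Lemma~\ref{op-extension} to lift $r_n$ to some $\til r_n\in\Qyuc_\omega^o(K)$ with $\til r_n\rest L=r_n$, and set $q_{n+1}=q_n\vee\til r_n$, the join in the semilattice $\Qyuc_\omega^o(K)$. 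This guarantees $q_n\rest L\preceq r_n$ and also $r_n=\til r_n\rest L\preceq q_{n+1}\rest L$, so the two chains interleave. Letting $q=\sup_n q_n$, I get $q\succeq q_0$ and, by the first paragraph, $q\rest L=\sup_n(q_n\rest L)$; the interleaving $q_n\rest L\preceq r_n\preceq q_{n+1}\rest L$ makes the two increasing sequences mutually cofinal, forcing $\sup_n(q_n\rest L)=\sup_n r_n$, and the latter lies in $\See$ since $\See$ is closed. Hence $q\rest L\in\See$, that is $q\in\See'$ with $q\succeq q_0$.

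The main obstacle is the cofinality step, and within it the one genuinely nontrivial ingredient is Lemma~\ref{op-extension}, which supplies the order preserving lift $\til r_n$ of $r_n$ from $L$ back to $K$; without such lifts the back-and-forth cannot alternate between the two spaces. The other point to handle with care is that restriction commutes with countable suprema, but, as noted, this reduces to the elementary commutation of $\bigcap_n$ with intersection against $L\times L$ at the level of equivalence relations.
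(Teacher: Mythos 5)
Your proposal is correct and follows essentially the same route as the paper: closedness via the fact that restriction to $L$ commutes with suprema of increasing chains, and cofinality via the interleaving construction that alternates between cofinality of $\See$ in $\Qyuc_\omega^o(L)$ and the lifting Lemma~\ref{op-extension}, then passes to the supremum. The only difference is cosmetic (you make explicit, at the level of equivalence relations, why restriction commutes with countable suprema, which the paper leaves implicit).
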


\begin{proof} It is obvious that $\See'$ is closed. We will show it is also cofinal.
Let $q_0\in\Qyuc_\omega^o(K)$ be arbitrary. Using the assumption that $\See$ is cofinal and Lemma~\ref{op-extension} we can construct $q_n,r_n\in\Qyuc_\omega^o(K)$ such that $q_{n-1}\rest L\preceq r_{n}\rest L$, $r_n\rest L\in\See$ and $q_n=\sup\{q_{n-1},r_n\}$. Then $q_0\preceq q_1\preceq q_2\preceq\dots$, so we can take $q=\sup_n q_n$. As
$q_0\rest L\preceq r_1\rest L \preceq q_1\rest L \preceq r_2\rest L\preceq\dots$,
we get that $q\rest L=\sup_n q_n\rest L=\sup_n r_n\rest L\in\See$. Thus $q\in\See'$ which was to be shown.
\end{proof}

\subsection{Typical quotients and internal order}

Let $K$ be a compact line. Denote by $M(K)$ the set of all internal points of uncountable character. Let $q:K\to L$ be a continuous order preserving surjection. We will denote 
$$Q(q)=\{x\in L: x\mbox{ is internal and } |q^{-1}(x)|>1\}.$$
The aim of this section is to prove the following proposition.

\begin{prop}\label{typical-io}
Let $K$ be a compact line. Then for typical quotient $q$ of $K$ 
we have $\io(Q(q))\ge\io(M(K))$.
\end{prop}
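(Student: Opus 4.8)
The plan is to show that for a closed cofinal family of quotients $q$, the set $Q(q)$ has internal order at least $\io(M(K))$, where $M(K)$ is the set of internal points of uncountable character. The key idea is that an internal point of uncountable character in $K$ cannot be ``split off'' by a metrizable quotient, so it must survive in a fiber of size $>1$. First I would recall that if $p\in M(K)$, then $p$ is internal, so $p$ is not isolated in either $[p,\to)$ or $(\leftarrow,p]$; since $p$ has uncountable character, the interval $[a,b]$ around $p$ cannot be $G_\delta$ for small enough $a<p<b$, and more importantly no countable collection of order preserving functions can separate $p$ from its immediate neighborhood on both sides simultaneously. The goal is that for a typical $q$, every point of $M(K)$ lands in a nondegenerate fiber and, crucially, that the internal structure (the internal order) of $M(K)$ inside $K$ is reflected in $Q(q)\subs L$.

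The main construction I would carry out is to define, for a given countable $C\subs C(K)$ (generating a candidate metrizable quotient via Lemma~\ref{typical_is_increasing}), the set of points that are ``relevant'' to the functions in $C$, and show this set is separable. Using $\aleph_0$-monolithicity arguments in the spirit of Proposition~\ref{aleph0monolitic} and Lemma~\ref{wraasjJFWRQ}, I expect that a point $p\in M(K)$ of uncountable character, being internal, is never irrelevant in the sense that it cannot be isolated away; thus whenever a typical quotient $q$ is considered, $p$ sits interior to a fiber or the fiber endpoints straddle $p$. The natural approach is to show the family
$$\See=\setof{q\in\Qyuc_\omega^o(K)}{\io(Q(q))\ge\io(M(K))}$$
is closed and cofinal. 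Closedness should follow from the behaviour of internal order under suprema of increasing chains, together with the fact that $Q(\sup_n q_n)$ contains (in the limit) the internal structure accumulated along the chain. For cofinality, given $q_0$, I would iteratively enlarge it to absorb enough of $M(K)$: using Lemma~\ref{o-urysohn} to force prescribed fibers through points near a witnessing configuration for $\io(M(K))\ge n$, and Lemma~\ref{op-extension} to extend such quotients from a separable closed subset containing the relevant internal points.

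The heart of the matter is an inductive transfer argument: if $p\in M(K)$ witnesses $\io(p,M(K))\ge n$, meaning there are points of $M(K)$ approaching $p$ from both sides with internal order $\ge n-1$, then in a typical quotient $q$ these approaching points must map to distinct points of $Q(q)$ accumulating at $q(p)$ from both sides, so that $\io(q(p),Q(q))\ge n$ as well. I would prove this by induction on $n$, where the base case $n=0$ says $p\in M(K)$ forces $q(p)\in Q(q)$ (i.e. $|q^{-1}(q(p))|>1$ or $p$ is genuinely internal in $L$), and the inductive step uses that order preservation sends the two-sided accumulation structure of $M(K)$ into $L$ faithfully, provided $q$ is chosen fine enough to separate the relevant witnessing points. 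The hard part will be precisely this inductive step: controlling that the witnessing points below and above $p$ are not collapsed together by $q$ (so their images remain distinct and two-sided accumulation is preserved), and simultaneously that they are not separated from $p$ in a way that would destroy the suprema/infima condition $\sup A^-_{n-1}=\inf A^+_{n-1}=x$ defining the internal order. Managing both of these constraints for a cofinal family of $q$, uniformly in the induction, is the main obstacle; I expect it requires choosing $q$ so that a fixed separable closed set carrying a witnessing configuration for $\io(M(K))$ is embedded with all its internal points lying in nondegenerate fibers, then invoking Lemma~\ref{lift-cc} to transfer closed cofinality from this separable piece back to all of $K$.
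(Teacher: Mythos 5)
Your overall strategy coincides with the paper's: prove, by induction on $n$, a pointwise transfer lemma saying that if $\io(x,M(K))\ge n$ then $\io(q(x),Q(q))\ge n$ for all $q$ in some closed cofinal family, and you correctly locate the main difficulty in the inductive step (keeping the witnessing points' images two--sidedly accumulating at $q(x)$ while simultaneously arranging that the induction hypothesis applies to them). However, there are genuine gaps. First, your base case is not right as stated: for $x\in M(K)$, uncountable character does guarantee $|q^{-1}(q(x))|>1$ for \emph{every} metrizable order preserving $q$ (an uncountable increasing net converging to $x$ must have eventually constant image), but it does \emph{not} guarantee that $q(x)$ is internal in $L$ --- the fiber $[a,b]$ of $x$ may well have $a$ isolated from the left (e.g.\ a successor ordinal), making $q(x)$ external and hence $q(x)\notin Q(q)$. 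One must restrict to the family of quotients whose fiber at $x$ has endpoints that are non-isolated $G_\delta$ points of the corresponding half-lines, and verify (via Lemma~\ref{o-urysohn}) that this family is closed and cofinal; you write ``$|q^{-1}(q(p))|>1$ \emph{or} $p$ is genuinely internal in $L$'' where both conditions are needed and only the first comes for free.

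Second, you propose to show directly that $\See=\setof{q}{\io(Q(q))\ge\io(M(K))}$ is closed and cofinal, but closedness of this naive family under countable suprema is exactly what is unclear: the internal order of $q(x)$ in $Q(q)$ can drop when passing to a finer quotient or to a supremum of a chain. The paper resolves this by defining $\See_{n+1}(x)$ \emph{recursively}: $q\in\See_{n+1}(x)$ iff the fiber $[a,b]$ of $x$ has non-isolated $G_\delta$ endpoints and the sets $\setof{y<a}{\io(y,M(K))\ge n,\ q\in\See_n(y)}$ and $\setof{y>b}{\io(y,M(K))\ge n,\ q\in\See_n(y)}$ have supremum $a$ and infimum $b$ respectively; cofinality is then a closing-off argument that at each stage shrinks the fiber and absorbs only \emph{countably many} witnessing points $L_k,R_k$ (so that Lemma~\ref{spectral} applies to the intersection $\bigcap\See_n(y)$). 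Without some such recursive bookkeeping your induction does not close. Finally, the detour through a separable closed subset and Lemma~\ref{lift-cc} is not needed and would be problematic here, since membership in $M(K)$ and internality of the witnessing points are properties relative to all of $K$, not to a separable piece; the paper's argument works directly in $K$.
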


This proposition is an immediate consequence of the following lemma.

\begin{lm} Let $K$ be a compact line and $n\in\Nat\cup\{0\}$. If $x\in M(K)$ is such that $\io(x,M(K))\ge n$, then for typical quotient $q\in \Qyuc_\omega^o(K)$ we have $\io(q(x),Q(q))\ge n$.
\end{lm}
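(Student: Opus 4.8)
The plan is to prove the lemma by induction on $n$. For a fixed $x$ with $\io(x,M(K))\ge n$ I will exhibit a closed cofinal subfamily of $\Qyuc_\omega^o(K)$ on which $\io(q(x),Q(q))\ge n$; since the construction will only invoke countably many auxiliary closed cofinal families, Lemma~\ref{spectral} lets me intersect them and conclude that the typical $q$ has the property. Two remarks are used throughout. First, because $x$ has uncountable character, for \emph{every} $q\in\Qyuc_\omega^o(K)$ the fibre $q^{-1}(q(x))$ is a nondegenerate closed interval $[\ell,r]\ni x$: a singleton fibre would force $x$ to have countable character, as $q(K)$ is metrizable. Thus at level $0$ the only thing to arrange is that $q(x)$ be internal in $q(K)$. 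Second, uncountable character means $x$ has uncountable cofinality from the left or uncountable coinitiality from the right; this is exactly what makes the induction work, as I explain next.

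The base case $n=0$ ($x\in M(K)$) asks that $q(x)$ be internal for typical $q$. Given $q_0$, I collapse, using Lemma~\ref{o-urysohn}, a nondegenerate $G_\delta$ interval $[\lambda_L,\lambda_R]\ni x$ lying inside the fibre of $q_0$ at $x$, and take the supremum with $q_0$. The endpoints are chosen to be limit points of $K$: on a side of uncountable cofinality/coinitiality one picks an $\omega$-sequence approaching $x$ whose supremum (resp. infimum) is a point $\lambda_L<x$ (resp. $\lambda_R>x$) of countable cofinality — here one uses that, when $\cf$ from the left is uncountable, no $\omega$-sequence of predecessors reaches $x$, so such a supremum stays strictly below $x$; on a side of countable character one simply takes the endpoint equal to $x$. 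Either way $q(x)$ has limit-point fibre endpoints and is internal. Closedness of the family $\setof{q}{q(x)\text{ internal}}$ follows since the fibre of a supremum is the intersection of the fibres and internality persists.

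For the inductive step I set $A=\setof{y\in M(K)}{y<x,\ \io(y,M(K))\ge n-1}$ and the symmetric $B$, so that $\sup A=\inf B=x$. The idea is to realize a \emph{countable} witnessing tree of depth $n$. At the root I choose, inside the fibre of $q_0$ at $x$, an $\omega$-sequence $w_k\in A$ with $\lambda_L:=\sup_k w_k$ and, symmetrically, $w_k'\in B$ with $\lambda_R:=\inf_k w_k'$, using the remark above to keep $\lambda_L<x<\lambda_R$ on the uncountable sides and $\lambda_L$ or $\lambda_R$ equal to $x$ on a countable side; I collapse $[\lambda_L,\lambda_R]$ by Lemma~\ref{o-urysohn}. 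Crucially the witnesses $w_k,w_k'$ lie strictly outside this fibre, so their images satisfy $q(w_k)<q(x)<q(w_k')$ and converge to $q(x)$ from the two sides. I then repeat the construction recursively at each $w_k$ and $w_k'$ (which have internal order $\ge n-1$), placing their fibres strictly below $\lambda_L$ (resp. above $\lambda_R$) and pairwise disjoint, and collapse all of them, taking the supremum of this countable family of order preserving metrizable quotients. By induction along the tree — matching exactly the recursion defining $\io$ — the points $q(w_k),q(w_k')$ are elements of $Q(q)$ with $\io(\cdot,Q(q))\ge n-1$ accumulating at $q(x)$ from below and above, whence $\io(q(x),Q(q))\ge n$.

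The main obstacle is the interaction of these collapses under the supremum. If a descendant fibre were allowed to reach up to $\lambda_L$, the root fibre's left endpoint in the supremum would exceed $\lambda_L$, the images $q(w_k)$ would then converge to a value strictly below $q(x)$, and the left accumulation would be lost; so the delicate point is to keep every child fibre \emph{strictly} separated from its parent's endpoints (which is possible because each $w_k$ is strictly below $\lambda_L$, and each chosen interval must be a nondegenerate $G_\delta$), so that the carefully chosen limit-point endpoints survive intersection. Carrying this out uniformly for the whole countable tree while simultaneously guaranteeing cofinality (everything done inside the fibres of the given $q_0$) and, finally, closedness of the resulting family — i.e.\ that internality of $q(x)$ and the two-sided accumulation of order-$(n-1)$ points persist under suprema of increasing chains — is exactly where a fusion/bookkeeping argument, together with the persistence of fibre endpoints under intersection, is required.
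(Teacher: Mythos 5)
Your preliminary observations and your base case are essentially the paper's: the fibre $q^{-1}(q(x))$ is automatically nondegenerate because $x$ has uncountable character and $q(K)$ is metrizable; internality of $q(x)$ is read off from whether the two fibre endpoints are isolated from the outside; and the family of $q$ whose fibre at $x$ has non-isolated (and $G_\delta$) endpoints from the appropriate sides is closed (fibres of suprema are intersections of fibres, and a supremum of points non-isolated from the left is non-isolated from the left) and cofinal via Lemma~\ref{o-urysohn}. Up to $n=0$ there is nothing to object to.

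The gap is in the inductive step, and it is precisely the part you defer in your last paragraph. The lemma asserts something about \emph{typical} quotients, which by definition requires exhibiting a \emph{closed} cofinal family $\See_n(x)\subs\Qyuc_\omega^o(K)$; what your tree construction actually delivers is that above every $q_0$ there is \emph{some} $q$ with $\io(q(x),Q(q))\ge n$, i.e.\ cofinality of the set of good quotients. That set is not closed for $n\ge1$: along an increasing sequence $q_1\rloe q_2\rloe\cdots$ the fibre of $x$ shrinks, points leave $Q(q_j)$, and the witnesses accumulating at $q_j(x)$ may all be destroyed in the supremum, so cofinality alone proves nothing about typical quotients. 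Your proposed remedy --- intersect ``countably many auxiliary closed cofinal families'' via Lemma~\ref{spectral} --- does not apply, because the auxiliary families $\See_{n-1}(w)$ are indexed by witnesses $w$ that cannot be fixed in advance: on a side where $x$ has uncountable character no countable set of witnesses accumulates at $x$ itself, so the witnesses must accumulate at the left endpoint $a$ of $q^{-1}(q(x))$, which depends on $q$ (and, for cofinality, must be allowed to depend on $q_0$). The paper's way out is to make the witnesses part of the \emph{definition} of the family: $\See_{n+1}(x)$ consists of those $q$ whose fibre $[a,b]$ at $x$ has suitable endpoints and for which the set $\setof{y<a}{\io(y,M(K))\ge n \text{ and } q\in\See_n(y)}$ is nonempty with supremum $a$ (symmetrically at $b$); closedness is then checked for this concretely defined family, and cofinality is obtained by the alternating closing-off construction you sketch. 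Without specifying the family and proving its closedness, the statement is not established --- that is the essential content here, not bookkeeping.
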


\begin{proof} We begin by two observations on order preserving quotients. Let $q:K\to L$ be an order preserving quotient and $L$ be metrizable. Then:
\begin{equation}\label{A}
\mbox{If $x\in K$ has uncountable character, then $|q^{-1}(q(x))|>1$.}
\end{equation}
Indeed, suppose that $x\in K$ has uncountable character in $(\leftarrow,x]$. Then there is a regular uncountable cardinal $\kappa$ and a strictly increasing transfinite sequence $(x_\alpha,\alpha<\kappa)$ in $K$ with limit $x$. As $L$ is metrizable, we get that the transfinite sequence $q(x_\alpha)$ is eventually constant and hence there is some $\alpha<\kappa$ with $q(x_\alpha)=q(x)$.

Further it is easy to check that we have the following:
\begin{equation}\label{B}
\begin{matrix}\mbox{A point $x\in L$ is internal in $L$ if and only if}\\ \mbox{$\min q^{-1}(x)$ is not isolated from the left}\\ \mbox{and $\max q^{-1}(x)$ is not isolated from the right.}\end{matrix}
\end{equation}

Now we proceed to the proof of the lemma. 
The proof will be done by induction.
First we prove it for $n=0$. Suppose that $x\in M(K)$. 
(This means, by definition, that $\io(x,M(K))\ge 0$.)
Let $\See_0(x)$ be the set of all $q\in\Qyuc_\omega^o(K)$ such that
$q^{-1}(q(x))=[a,b]$ such that $a$ is a non-isolated $G_\delta$-point of $(\leftarrow,a]$ and $b$ is a non-isolated $G_\delta$-point of $[b,\to)$.
Then $q(x)\in Q(q)$ for each $q\in \See_0(x)$ (by (\ref{A}) and (\ref{B})).  

So it is enough to check that $\See_0(x)$ is a closed cofinal subset of $\Qyuc_\omega^o(K)$. 

To prove it is closed fix $q_1\preceq q_2\preceq\dots$ in $\See_0(x)$ and denote by $q$ their supremum.
Then $q_k^{-1}(q_k(x))=[a_k,b_k]$ where $a_k$ and $b_k$ have the above properties. Moreover, we have $a_1\le a_2\le\dots$ and $b_1\ge b_2\ge\dots$. Then 
$q^{-1}(q(x))=[a,b]$ where $a=\sup_k a_k$ and $b=\inf_k b_k$. It is clear that $a$ and $b$ have the required property and so $q\in\See_0(x)$.

To prove the cofinality, choose any $q_1\in \Qyuc_\omega^o(K)$. Then $q_1^{-1}(q_1(x))=[u,v]$ for some $u\le x$ and $v\ge x$ such that $u<v$ (by (\ref{A})). As $x$ is internal in $K$, we can choose $a\in [u,x]$ and $b\in [b,v]$ such that $a$ is a non-isolated $G_\delta$-point of $(\leftarrow,a]$ and $b$ is a non-isolated $G_\delta$-point of $[b,\to)$. 
 By Lemma~\ref{o-urysohn} there is $q_2\in \Qyuc_\omega^o(K)$ such that $q_2^{-1}(q_2(x))=[a,b]$. It remains to set $q=\sup\{q_1,q_2\}$.
Then $q\ge q_1$ and $q^{-1}(q(x))=[a,b]$, so $q\in\See_0(x)$. 
This finishes the proof for $n=0$. 

Further, suppose that the statement holds for some $n\ge 0$. Let us prove it  for $n+1$. For each $x\in M(K)$ with $\io(x,M(K))\ge n$ fix a closed cofinal set $\See_n(x)\subs\Qyuc_\omega^o(K)$ such that $\io(q(x),Q(q))\ge n$ for each $q\in\See_n(x)$.  Suppose now that $x\in M(K)$ is such that $\io(x,M(K))\ge n+1$. We let $\See_{n+1}(x)$ be the set of all $q\in \Qyuc_\omega^n(K)$ such that $q^{-1}(q(x))=[a,b]$ where $a,b$ satisfy
\begin{itemize}
	\item $a$ is a non-isolated $G_\delta$-point of $(\leftarrow,a]$ and $b$ is a non-isolated $G_\delta$-point of $[b,\to)$.
  \item The set $\{y<a: \io(y,M(K))\ge n \mbox{ and }q\in \See_n(y)\}$ is nonempty and has supremum $a$.
  \item The set $\{y>b: \io(y,M(K))\ge n \mbox{ and }q\in \See_n(y)\}$ is nonempty and has infimum $b$.
\end{itemize}

It is easy to check that $\See_{n+1}(x)$ is closed. We will show that it is also cofinal by an easy closing-off argument: Let $q_0\in\Qyuc_\omega^o(K)$ be arbitrary. We will construct by induction $a_k,b_k\in K$, $L_k,R_k\subs M(K)$
and $q_k,s_k\in \Qyuc_\omega^o(K)$ for $k\in\Nat$ such that the following conditions are fulfilled:
\begin{itemize}
	\item[(i)] $x\in[a_k,b_k]\subs q_{k-1}^{-1}(q_{k-1}(x))$;
	\item[(ii)] $a_k$ is a non-isolated $G_\delta$-point of $(\leftarrow,a_k]$ and $b_k$ is a non-isolated $G_\delta$-point of $[b_k,\to)$;
	\item[(iii)] $a_k=\sup\{y<a_k, a_k\in M(K), \io(y,M(K))\ge n\}$ 
	\item[(iv)] $b_k=\inf\{y>b_k, b_k\in M(K), \io(y,M(K))\ge n\}$
	\item[(v)] $s_k\ge q_{k-1}$, $s_k^{-1}(s_k(x))=[a_k,b_k]$;
	\item[(vi)] $L_k$ is a countable subset of $M(K)\cap (\leftarrow,a_k)$ such that
	$\sup L_k=a_k$;
	\item[(vii)] $R_k$ is a countable subset of $M(K)\cap (b_k,\to)$ such that
	$\inf R_k=a_k$;
	\item[(viii)] $\io(y,M(K))\ge n$ for each $y\in L_k\cup R_k$;
	\item[(ix)] $q_k\ge s_k$;
	\item[(x)] $q_k\in\bigcap\{\See_n(y): y\in\bigcup_{j=1}^k(L_j\cup R_j)\}$.
\end{itemize}

Let $k\ge 1$ and suppose that $q_{k-1}$ is constructed. Then $q_{k-1}^{-1}(q_{k-1}(x))$ is a closed $G_\delta$ interval containing $x$.
As $\io(x,M(K))\ge n+1$, we can find $a_k$ and $b_k$ such that the conditions (i)--(iv) are fulfilled. Using Lemma~\ref{o-urysohn} we can find $s_k$ satisfying the condition (v) (similarly as in the first induction step).
It follows from conditions (ii)--(iv) that we can find sets $L_k$ and $R_k$ satisfying (vi)--(viii). As $\See_n(y)$ is closed and cofinal for each 
$y\in\bigcup_{j=1}^k(L_j\cup R_j)$ and the latter set is countable, the intersection described in (x) is cofinal and so we can find $q_k$ satisfying (ix) and (x). This completes the construction.

Finaly, set $q=\sup_k q_k=\sup_k s_k$. Then clearly $q\in\See_{n+1}(x)$.
This completes the proof that $\See_{n+1}(x)$ is cofinal.

We have proved that $\See_{n+1}(x)$ is a closed cofinal subset of $\Qyuc_\omega^o(K)$ and for any $q\in\See_{n+1}(x)$ we have $q(x)\in Q(q)$ and $\io(q(x),Q(q))\le n+1$. This completes the proof.
\end{proof}

\subsection{Applications to the continuous SCP}

\begin{tw}\label{thm-negative} Let $K$ be a compact line. If $C(K)$ has the continuous SCP, then $K$ is $\aleph_0$-monolithic and $\io(M(K))<\infty$.
 
If $C(K)$ has moreover the continuous $k$-SCP for some $k<2$, then $M(K)=\emptyset$.
\end{tw}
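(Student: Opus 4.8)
The plan is to assemble the statement from three earlier results --- Theorem~\ref{twergoq}, Proposition~\ref{typical-io} and Lemma~\ref{lines-averaging} --- glued together by the stability of closed cofinal families under countable intersection (Lemma~\ref{spectral}). Throughout I would argue by contradiction and use the dictionary from Section~\ref{typq}, namely that the typical separable subspace of $C(K)$ is of the form $q^*C(L)$ with $q\in\Qyuc_\omega^o(K)$; in this way a ``typical quotient'' property and a ``typical subspace'' property may be combined simply by intersecting the two (closed cofinal) witnessing families. As a preliminary, I would observe that the continuous SCP trivially implies the SCP, since a closed cofinal family of complemented subspaces is in particular a cofinal family of complemented subspaces; by the equivalence (c)$\iff$(d) of Theorem~\ref{twergoq} this already gives that $K$ is $\aleph_0$-monolithic.

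For the finiteness of $\io(M(K))$, suppose toward a contradiction that $\io(M(K))=\infty$. Then $M(K)\ne\emptyset$, so $K$ is non-metrizable and $C(K)$ is non-separable, which is what Lemma~\ref{spectral} requires. The continuous SCP yields a closed cofinal family of complemented subspaces, i.e. the typical separable subspace of $C(K)$ is complemented; on the other hand, Proposition~\ref{typical-io} together with the dictionary says that the typical separable subspace is $q^*C(L)$ with $q$ order preserving and $\io(Q(q))\ge\io(M(K))=\infty$. By Lemma~\ref{spectral} both hold on a common nonempty closed cofinal family, and selecting one of its members produces a complemented $q^*C(L)$ with $\io(Q(q))=\infty$, contradicting Lemma~\ref{lines-averaging}(2), which asserts that complementedness of $q^*C(L)$ forces $\io(Q(q))<\infty$. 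Hence $\io(M(K))<\infty$.

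For the case $1\le k<2$, I would assume the continuous $k$-SCP and suppose $M(K)\ne\emptyset$, so $\io(M(K))\ge0$ and again $C(K)$ is non-separable. The continuous $k$-SCP makes the typical separable subspace $k$-complemented, i.e. it admits a projection of norm $\le k<2$. By Proposition~\ref{typical-io} and the dictionary, the typical separable subspace is $q^*C(L)$ with $\io(Q(q))\ge\io(M(K))\ge0$, in particular $Q(q)\ne\emptyset$. Intersecting via Lemma~\ref{spectral} and selecting a member, I obtain a $q^*C(L)$ that simultaneously admits a projection of norm $<2$ and has $Q(q)\ne\emptyset$; but $Q(q)\ne\emptyset$ forces, by Lemma~\ref{lines-averaging}(2a) with $n=0$, every projection of $C(K)$ onto $q^*C(L)$ to have norm $\ge2+\lceil\frac{0-1}2\rceil=2$, a contradiction. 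Therefore $M(K)=\emptyset$.

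The conceptual content is modest here, so the real care --- and the only genuine obstacle --- lies in the bookkeeping: the closed cofinal families supplied by the SCP hypotheses live a priori in $\Sep(C(K))$, whereas those supplied by Proposition~\ref{typical-io} live in $\Qyuc_\omega^o(K)$, and Lemma~\ref{spectral} must be applied inside a single $\sigma$-complete semilattice. This is precisely what the identification of order-preserving metrizable quotients with a closed cofinal family of separable subalgebras of $C(K)$ (Section~\ref{typq}) furnishes, together with the elementary fact that a closed cofinal subfamily of a closed cofinal family is again closed cofinal; this makes the intersections above legitimate and nonempty. The remaining hypothesis of Lemma~\ref{spectral}, non-separability of $C(K)$, is automatic once $M(K)\ne\emptyset$.
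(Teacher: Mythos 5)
Your proposal is correct, and for the two internal-order claims it is essentially the paper's argument: the paper dismisses them with ``follow immediately from Proposition~\ref{typical-io} and Lemma~\ref{lines-averaging}'', and what you write is precisely the bookkeeping behind that sentence --- identifying the closed cofinal subset of $\Qyuc_\omega^o(K)$ supplied by Proposition~\ref{typical-io} with a closed cofinal subset of $\Sep(C(K))$, intersecting it via Lemma~\ref{spectral} with the family witnessing the continuous ($k$-)SCP, and reading off the contradiction from the norm estimate $\|P\|\ge 2+\lceil\frac{n-1}{2}\rceil$. Where you genuinely diverge is the $\aleph_0$-monolithicity claim. You derive it by the shortest available route: continuous SCP $\Rightarrow$ SCP $\Rightarrow$ monolithic, quoting (c)$\Rightarrow$(d) of Theorem~\ref{twergoq}; this is legitimate (that theorem is proved earlier, in Section~\ref{scteaaofcl}) but it imports the WCG/double-arrow machinery of Lemmas~\ref{lnonSCPviafakeWCG} and~\ref{ldblarrownonmon}. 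The paper instead stays entirely inside the averaging-operator framework: if $K$ is not monolithic, Proposition~\ref{aleph0monolitic} makes $Q(q)$ uncountable for typical $q\in\Qyuc_\omega^o(K)$, a short counting argument (in a metrizable line only countably many points of $Q(q)$ can have finite internal order, since their witnessing intervals are pairwise disjoint for each fixed order) forces $\io(Q(q))=\infty$, and Lemma~\ref{lines-averaging} then kills complementedness of the typical subspace --- so the paper's version actually exhibits a concrete closed cofinal family of non-complemented subspaces, which is slightly more information than mere failure of the continuous SCP. Both arguments are valid; yours is more economical given what has already been established, the paper's is more self-contained and uniform with the rest of the section.
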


\begin{proof} The statements concerning the internal order follow immediately
from Proposition~\ref{typical-io} and Lemma~\ref{lines-averaging}.
 
Suppose that $K$ is not $\aleph_0$-monolithic. It follows from Proposition~\ref{aleph0monolitic} that for typical $q\in \Qyuc_\omega^o(K)$ we have $|q^{-1}(y)|>1$ for uncountably many $y\in q(K)$. Fix any such $q$ and
set $L= \img q K$. As $L$ has only countably many external points, there are uncountably many internal points $y\in L$ such that $|q^{-1}(y)|>1$. In other words, the set $Q(q)$ is uncountable. 

Let $y\in Q(q)$ be arbitrary. Suppose that $\io(y,Q(q))=n<\infty$. Then there is a nonempty open interval $(\alpha_y,\beta_y)$ such that $y=\alpha_y$ or $y=\beta_y$ and this interval contains no other points of the same internal order. Moreover, it can be achieved that the intervals $(\alpha_y,\beta_y)$ are mutually disjoint for points of the same internal order. As $L$ is metrizable,
we conclude that there are only countably many $y\in Q(q)$ with $\io(y,Q(q))<\infty$. As $Q(q)$ is uncountable, we get $\io(Q(q))=\infty$, 
therefore $q^*(C(L))$ is not complemented in $C(K)$ by Lemma~\ref{lines-averaging}. This completes the proof.
\end{proof}

We now present the announced example concerning ``unbounded" continuous SCP.

\begin{tw}\label{tunboundedscp}
There exists a compact line $K$ of weight $\aleph_1$, with the following properties.
\begin{enumerate}
	\item[(1)] $C(K)$ has the continuous (even controlled) SCP.
	\item[(2)] For every $\ntr$, $C(K)$ fails to have continuous $n$-SCP.
\end{enumerate}
In other words, whenever $\Ef$ is a continuous chain of separable subspaces of $C(K)$ with $C(K)=\bigcup\Ef$, then we can find $\sett{X_n}{\ntr}\subs \Ef$ such that for each $\ntr$ the space $X_n$ is not $n$-complemented in $C(K)$.
\end{tw}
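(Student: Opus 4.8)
I would build the space $K$ so that its set $M(K)$ of internal points of uncountable character has infinite internal order, $\io(M(K))=\infty$, while simultaneously arranging $K$ to be $\aleph_0$-monolithic so that Lemma~\ref{wraasjJFWRQ} delivers the (regular, controlled) SCP. The point is that Theorem~\ref{thm-negative} already provides one direction: if $\io(M(K))=\infty$ then for typical quotient $q$ we have $\io(Q(q))\ge\io(M(K))=\infty$ by Proposition~\ref{typical-io}, and by Lemma~\ref{lines-averaging}(2)(a) any projection of $C(K)$ onto $q^*C(L)$ has norm at least $2+\lceil\frac{n-1}2\rceil$ for every $n$. Since this holds on a closed cofinal family of quotients, no uniform bound $k$ can work, giving (2). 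Conversely, $\aleph_0$-monolithicity gives the continuous controlled SCP via Lemma~\ref{wraasjJFWRQ}, giving (1). So the whole theorem reduces to \emph{constructing} a single $\aleph_0$-monolithic compact line $K$ of weight $\aleph_1$ with $\io(M(K))=\infty$.

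\textbf{Constructing the line.} I would build $K$ by a transfinite recursive ``splitting'' construction, inserting internal points of higher and higher internal order. The cleanest route is to build, for each fixed $n$, a countable compact line $K_n$ possessing a point $x$ with $\io(x, M)=n$ (where $M$ is the internal points of uncountable character within a suitable embedding), and then to paste countably many such pieces together along a convergent sequence so that the resulting line $K$ contains, for every $n$, internal points of uncountable character of internal order exactly $n$; hence $\io(M(K))=\infty$. The internal order is defined so that $\io(x,A)\ge n$ requires $x$ to be a two-sided limit of points of internal order $\ge n-1$ from within $A$; to realize uncountable character I would take the building blocks to be long ``double-arrow-like'' ordered continua indexed by $\omega_1$, so each split point is a limit of an $\omega_1$-sequence. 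Concretely, one takes an ordered sum of pieces where at each level one inserts, on both sides of a target point, cofinally many points of the previous level, using copies of an ordinal like $\omega_1$ to guarantee uncountable character while keeping the total weight $\aleph_1$.

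\textbf{Keeping $K$ $\aleph_0$-monolithic.} The genuine tension, and the main obstacle, is that uncountable character points naturally threaten monolithicity: I must ensure that every countable subset of $K$ has metrizable (equivalently, second countable) closure. By Proposition~\ref{aleph0monolitic} this is equivalent to requiring that for every $q\in\Qyuc_\omega^o(K)$ the set of $y$ with $|q^{-1}(y)|>1$ is countable, or directly that $\overline C$ is metrizable for each countable $C\subs K$. The way to secure this is to arrange the construction so that no countable set accumulates at an uncountable-character point from the ``thick'' side: each point of uncountable character should be approached by a set of points only along a \emph{non-metrizable} $\omega_1$-scale, so that any countable subset, being bounded away cofinally, has a closure meeting only countably many levels. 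I would verify $\aleph_0$-monolithicity by showing that a countable $C\subs K$ can touch only countably many of the inserted split-points, whence $\overline C$ lies inside a countable union of metrizable blocks and is itself separable and first countable, thus second countable.

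\textbf{Where the difficulty lies.} The delicate balance is precisely to make $M(K)$ rich enough (internal order infinite, points of genuinely uncountable character) yet keep $K$ $\aleph_0$-monolithic; these pull in opposite directions, since uncountable character is what endangers monolithicity. The recursion must introduce uncountable-character internal points of each finite internal order using $\omega_1$-indexed approximations, while guaranteeing that countable sets remain ``locally metrizable.'' Once the line is constructed and both $\io(M(K))=\infty$ and $\aleph_0$-monolithicity are checked, the two conclusions follow formally: (1) from Lemma~\ref{wraasjJFWRQ}, and (2) from Proposition~\ref{typical-io} together with Lemma~\ref{lines-averaging}(2)(a), exactly as the reformulation at the end of the statement describes. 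I expect verifying $\aleph_0$-monolithicity of the explicit construction to be the hardest single step.
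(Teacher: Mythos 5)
Your plan contains a fatal internal contradiction. You propose to obtain (2) by arranging $\io(M(K))=\infty$, and to obtain (1) from $\aleph_0$-monolithicity via Lemma~\ref{wraasjJFWRQ}. But Theorem~\ref{thm-negative} --- which you yourself cite --- says that if $C(K)$ has the continuous SCP then $\io(M(K))<\infty$. Indeed, if $\io(M(K))=\infty$, then Proposition~\ref{typical-io} together with Lemma~\ref{lines-averaging}(2) shows that the \emph{typical} separable subspace $q^*C(L)$ is not complemented at all (not merely badly complemented), so by Lemma~\ref{spectral} every closed cofinal family in $\Sep(C(K))$ contains non-complemented members and the continuous SCP fails outright. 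So any $K$ you build this way refutes (1). The second, related, error is that Lemma~\ref{wraasjJFWRQ} yields the (regular) \emph{controlled} SCP, not the \emph{continuous} SCP: it produces, for each countable set, one complemented separable superspace, but gives no closed cofinal family of complemented subspaces, and closedness under countable increasing unions is exactly what breaks down here. (Scattered compact lines of cardinality $\aleph_1$ with $\io(M(K))=\infty$, as constructed in the last section of the paper, are $\aleph_0$-monolithic and hence have the controlled SCP by Theorem~\ref{twergoq}, yet fail the continuous SCP.)

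The point of the theorem is subtler than your reading of it: one needs every member of some closed cofinal chain to be complemented (so all the relevant internal orders must be \emph{finite}), while forcing every club subchain to contain members whose complementation constants are unbounded. The paper achieves this with a stationarity argument rather than by making any single internal order infinite: it takes $K=K(X)$ for a linearly ordered set $X=S\cup T$, where $S$ consists of the finitely supported points of $\Qyu^{\omega_1}$ and $T$ is built from a ladder system $\sett{c_\delta}{\delta\in\lim(\omega_1)}$, a map $\map e{\lim(\omega_1)}\omega$ with each $e^{-1}(n)$ stationary, and sets $A_n\subs\Qyu$ with $\io(A_n)=n$. Along the canonical continuous chain $E_\al=q_\al^*C(K_\al)$ the limit stage $E_\delta$ is complemented because $\io(A_{e(\delta)})=e(\delta)<\infty$ (Lemma~\ref{lines-averaging}(2)(b)), which gives the continuous SCP; but any club chain witnessing continuous $k$-SCP must, by Lemma~\ref{spectral}, meet the stationary set $e^{-1}(n)$ for $n$ with $2+\lceil\frac{n-1}2\rceil>k$, and then Lemma~\ref{lines-averaging}(2)(a) gives a contradiction. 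Without some such mechanism (stationary sets, or an equivalent device making the quotient complexity unbounded on every club while finite at each stage), your construction cannot deliver both (1) and (2) simultaneously.
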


\begin{proof}
We shall use the duality between linearly ordered sets and $0$-dimensional compact lines. Namely, given a linearly ordered set $X$, let $K(X)$ be the set of all final segments of $X$ ordered by inclusion. Recall that $F\subs X$ is a {\em final segment} if $[x,\rightarrow)\subs F$ whenever $x\in F$. It is clear that $K(X)$ is a $0$-dimensional compact line. Further, every increasing map $\map fXY$ between linearly ordered sets induces a continuous increasing map $\map {K(f)}{K(Y)}{K(X)}$, defined by $K(f)(F) = \inv fF$.

Given an ordinal $\al$, we shall consider the set $\Qyu^\al$ ordered lexicographically. Define
$$S = \setof{x\in \Qyu^{\omega_1}}{|\suppt(x)| < \aleph_0},$$
where $\suppt(x) = \setof{\xi}{x(\xi)\ne 0}$.

Fix a {\em ladder system} $\sett{c_\delta}{\delta\in\lim(\omega_1)}$.
That is, the set $c_\delta$ has order type $\omega$ and $\sup c_\delta = \delta$ for each $\delta \in \lim(\omega_1)$.
Here $\lim(\omega_1)$ denotes, as usual, the set of all positive countable limit ordinals. 

Further, fix a function $\map e{\lim(\omega_1)}\omega$ such that $e^{-1}(n)$ is a stationary set for each $\ntr$. The existence of such a function is a special case of Ulam's Theorem (see e.g. \cite[Ch. II, Thm. 6.11]{Kunen}).

For each $\ntr$ choose $A_n\subs \Qyu$ such that $\io(A_n) = n$.
Define
$$T = \setof{x\in\Qyu^{\omega_1}}{ (\exists\;\delta\in \lim(\omega_1))(\exists\; q\in A_{e(\delta)})\; x = q \cdot \chi_{c_\delta} }.$$
Finally, let $X = S\cup T$ and $K = K(X)$. We claim that $K$ is as required.

For this aim, we first describe a natural continuous chain of complemented separable subspaces whose union is $C(K)$.
Given $\al<\omega_1$, let
$$X_\al = \setof{x\in X}{ \sup(\suppt(x)) < \al }.$$
Notice that $X_\delta = \bigcup_{\xi<\delta}X_\xi$, whenever $\delta$ is a limit ordinal. Clearly, $X = \bigcup_{\al < \omega_1}X_\al$.
Let $K_\al = K(X_\al)$ and let $\map{q_\al}K{K_\al}$ be the increasing quotient induced by inclusion $X_\al \subs X$.
Formally, $q_\al(p) = p \cap X_\al$ for every final segment $p\subs X$.

Observe that $\sett{q_\al}{\al<\omega_1}$ induces a continuous inverse sequence with limit $K$.
In other words, setting $E_\al = {q_\al^*}{C(K_\al)}$, we have that $\sett{E_\al}{\al<\omega_1}$ is a continuous chain of closed separable subspaces of $C(K)$, covering $C(K)$.

Notice that each $q_{\al+1}$ is a retraction, therefore $E_{\al+1}$ is $1$-complemented in $C(K)$.
On the other hand, it is easy to see that, given a limit ordinal $\delta$, the set of internal points of $K_\delta$ which have a non-trvial fiber with respect to $q_\delta$ is naturally order isomorphic to $A_{e(\delta)}$.
Since $\io(A_n)<\infty$ for each $\ntr$, by Lemma~\ref{lines-averaging}, $E_\delta$ is complemented in $C(K)$. This shows that $C(K)$ has the continuous SCP.
Suppose it has continuous $k$-SCP. Then this property must be witnessed by a chain of the form $\sett{E_\al}{\al\in C}$, where $C\subs \omega_1$ is closed and unbounded (it is an immediate consequence of Lemma~\ref{spectral}).
Fix $\ntr$ so that
$$2 + \left\lceil \frac{n-1}2\right\rceil > k.$$
Since $e^{-1}(n)$ is stationary, there exists $\delta\in\lim(\omega_1)$ such that $e(\delta)=n$ and $\delta \in C$.
By Lemma~\ref{lines-averaging}(2)(a), no linear projection onto $E_\delta$ can have norm $\loe k$, a contradiction.
\end{proof}

\section{Scattered compact lines}

In this section we prove a partial converse to Theorem~\ref{thm-negative} within scattered compact lines.

\begin{tw}\label{thm-scattered} Let $K$ be a scattered compact line.
\begin{itemize}
	\item[(1)] The following assertions are equivalent:
	
\begin{itemize}
	\item[(i)] $K$ has a retractional skeleton.
	\item[(ii)] $C(K)$ has a $1$-projectional skeleton.
	\item[(iii)] $C(K)$ has the continuous $k$-SCP for some $k<2$.
	\item[(iv)] $M(K)=\emptyset$.
 
\end{itemize}
 \item[(2)] If $M(K)$ is finite, then $C(K)$ has a $4$-projectional skeleton.
 \item[(3)] If $M(K)$ is countable, then $C(K)$ has the continuous $3$-SCP.
 
\end{itemize}
\end{tw}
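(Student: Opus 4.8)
The plan is to reduce all three parts to the behaviour of the sets $Q(q)$ attached to order preserving metrizable quotients $q\in\Qyuc_\omega^o(K)$, which are governed by $M(K)$ through Lemma~\ref{lines-averaging} and Proposition~\ref{typical-io}, and to translate ``typical quotient'' statements into statements about $\Sep(C(K))$ via Lemma~\ref{typical_is_increasing}. Two features of scattered lines will be used repeatedly: every $q\in\Qyuc_\omega^o(K)$ has countable image, and jumps are dense, i.e.\ if $a<b$ then $[a,b]$ contains an immediate-successor pair (otherwise $[a,b]$ would be densely ordered, hence not scattered).

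The heart of the argument, and the step I expect to be the main obstacle, is a structural lemma: for scattered $K$ there is a closed cofinal $\Ef\subseteq\Qyuc_\omega^o(K)$ with $\io(Q(q))\loe\io(M(K))$ for every $q\in\Ef$, and with $Q(q)=\emptyset$ whenever $M(K)=\emptyset$. The idea is that a point $x\in Q(q)$ with fiber $[x^-,x^+]$, $x^-<x^+$, whose fiber contains no point of $M(K)$ can be removed by refinement: using density of jumps I pick an immediate-successor pair $c^-\lessdot c^+$ inside $[x^-,x^+]$ and refine $q$ by the clopen cut at $c^+$ (an element of $\Qyuc_\omega^o(K)$ by Lemma~\ref{o-urysohn}); the new fibers then have an isolated endpoint at the jump, so their images are external, and since the relevant endpoints have countable character this configuration survives further refinement. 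Images of points of $M(K)$, by contrast, cannot be removed cofinally, which is exactly Proposition~\ref{typical-io}. Performing all such eliminations simultaneously and closing off produces $\Ef$; the inequality $\io(Q(q))\loe\io(M(K))$ then follows by arranging cofinally that $q$ separates the (pairwise jump-separable) points of $M(K)$, so that $Q(q)$ becomes order isomorphic to a subset of $M(K)$. The delicate part is the simultaneous bookkeeping: removing every spurious bad point at once while keeping the family closed and cofinal and still controlling the internal order.

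Next comes a short character argument: if $M(K)$ is countable and some $x\in M(K)$ had $\io(x,M(K))\goe1$, then $x$ would be the non-attained supremum and infimum of countable subsets of $M(K)$, hence approached by $\omega$-sequences from both sides, forcing $x$ to have countable character, contradicting $x\in M(K)$. Thus $M(K)$ countable gives $\io(M(K))\loe0$, and $M(K)$ finite gives $\io(M(K))=0$. For (3) I then take the family $\Ef$ above: since $\io(M(K))\loe0$, each $q\in\Ef$ has $\io(Q(q))\loe0$, so by Lemma~\ref{lines-averaging}(2)(b) with $n=0$ the subspace $q^*C(L)$ is $3$-complemented, and by Lemma~\ref{typical_is_increasing} this is a closed cofinal family of $3$-complemented separable subspaces, i.e.\ the continuous $3$-SCP. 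For (1), the implication (iv)$\implies$(i) uses $M(K)=\emptyset$, so $Q(q)=\emptyset$ on $\Ef$ and Lemma~\ref{lines-averaging}(1) supplies a canonical right inverse $i_q$ of each $q$; the maps $r_q=i_q\cmp q$ are retractions onto metrizable subspaces, and I would verify that they form a retractional skeleton, the compatibility $r_q\cmp r_{q'}=r_q=r_{q'}\cmp r_q$ for $q\preceq q'$ following because $q$ factors through $q'$ and because the canonical endpoint selection (governed solely by externality) makes the images $i_q(L)$ nested. The remaining implications of (1) are formal: (i)$\implies$(ii) is the known equivalence between retractional skeletons on $K$ and $1$-projectional skeletons on $C(K)$; (ii)$\implies$(iii) holds since a $1$-projectional skeleton yields the continuous $1$-SCP and $1<2$; and (iii)$\implies$(iv) is the second assertion of Theorem~\ref{thm-negative}.

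Finally, for (2) with $M(K)$ finite, the family $\Ef$ gives $Q(q)=q(M(K))$, a finite set whose points persist (as images of the finitely many points of $M(K)$) along every chain in $\Ef$. This stability is what allows the averaging projections of Lemma~\ref{lines-averaging}(2)(b) to be assembled coherently: I would place the correcting bumps at these finitely many stable bad points consistently across the chain, check $P_q\cmp P_{q'}=P_q$ for $q\preceq q'$, and carry out the norm estimate, which under the coherence requirement yields the uniform bound $4$ (one more than the single-quotient estimate $2\cdot0+3$, the extra unit coming from matching the projections across consecutive quotients), producing a $4$-projectional skeleton.
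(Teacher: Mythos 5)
Your reduction of part (3) to ``$M(K)$ countable $\Rightarrow\io(M(K))\loe 0$'' plus Lemma~\ref{lines-averaging}(2)(b) with $n=0$ is sound and close in spirit to the paper, and the formal implications in (1) are fine. But the two load-bearing steps are not. First, the compatibility of the retractions $r_q=i_q\cmp q$ in (iv)$\implies$(i) fails: the selection rule of Lemma~\ref{lines-averaging}(1) (``take the maximum of the fiber iff the minimum is external'') is not monotone under refinement, so the images $i_q(L)$ are \emph{not} nested. Concretely, let $K=\omega+1+\omega^{-1}$ with points $a_0<a_1<\dots\nearrow c\swarrow\dots<b_1<b_0$, so $M(K)=\emptyset$. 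For the constant quotient $q$ the fiber is $[a_0,b_0]$, its minimum $a_0$ is external, so $i_q$ picks $b_0$ and $r_q\equiv b_0$. For the finer quotient $q'$ collapsing only $[c,b_0]$ one has $Q(q')=\emptyset$, but the minimum $c$ of that fiber is internal, so $i_{q'}$ picks $c$ and $r_{q'}(b_0)=c\ne b_0$; hence $r_{q'}\cmp r_q\ne r_q$. No cofinal restriction repairs this without redesigning the selection from global data, which is exactly what the paper does: it proves (via Lemma~\ref{classC}) that $M(K)=\emptyset$ forces $K$ to lie in a class $\Cee$ built by lexicographic sums, and then constructs the skeleton by hand on such sums (Lemma~\ref{lemma-skeleton}), with the endpoint choices governed by a fixed decomposition rather than by each quotient separately.

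Second, your ``structural lemma'' is only a heuristic, and its delicate part is genuinely delicate: the family $\setof{q\in\Qyuc_\omega^o(K)}{Q(q)=\emptyset}$ need not be closed. Take $K=\omega+1+1+\omega^{-1}$ with $u_n\nearrow a$, $b$ the immediate successor of $a$, and $v_n\searrow b$; the quotients $q_k$ collapsing $[u_k,v_k]$ all have $Q(q_k)=\emptyset$ (the minimum $u_k$ is isolated), yet their supremum has fiber $\{a,b\}$ with internal image, so $Q(\sup_kq_k)\nnempty$. Thus the simultaneous closing-off you defer is the actual content, and the paper sidesteps it in (3) by covering $K$ with the countably many maximal intervals avoiding $M(K)$, applying part (1) on each, and lifting via Lemma~\ref{lift-cc}. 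Finally, your part (2) does not work as described: the projections of Lemma~\ref{lines-averaging}(2)(b) depend on choices of intervals $(\alpha_p,\beta_p)$ and bump functions, there is no argument that they can be made to satisfy $P_q\cmp P_{q'}=P_q$, and the constant $4$ is not ``$3$ plus one for matching''. The paper instead duplicates the finitely many points of $M(K)$ to get $L$ with $M(L)=\emptyset$, takes the $1$-projectional skeleton on $C(L)$ from part (1), and transports it through an explicit isomorphism $T:C(L)\to C(K)$ with $\norm T\,\norm{T^{-1}}\loe 4$.
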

 
This theorem is really a partial converse to Theorem~\ref{thm-negative} within scattered spaces. However, it is completely satisfactory. We conjecture that the following is true. Nonetheless, we were not able to prove it.

\begin{conj} Let $K$ be a scattered compact line. Then $C(K)$ has a projectional skeleton if and only if $\io(M(K))<\infty$. Moreover, the constant of such a projectional skeleton can be estimated similarly as in Lemma~\ref{lines-averaging}.
\end{conj}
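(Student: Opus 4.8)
The plan is to prove the two implications separately, taking the easy direction from results already in the paper and attacking the hard direction by a coherent averaging construction along a cofinal system of quotients. The implication that a projectional skeleton forces $\io(M(K))<\infty$ comes for free: since a projectional skeleton can always be taken bounded (\cite[Proposition 4.1]{skeletons}), $C(K)$ has a $k$-projectional skeleton for some $k$, hence the continuous $k$-SCP, hence the continuous SCP, and Theorem~\ref{thm-negative} then yields $\io(M(K))<\infty$ (scatteredness is not even needed here). So the whole content lies in the converse together with the norm estimate.

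For the ``if'' direction I would work inside the semilattice $\Qyuc_\omega^o(K)$, using $C(K)=\overline{\bigcup_{q}q^*C(\img qK)}$ along any cofinal family. Producing a $C$-projectional skeleton amounts to finding a closed cofinal $\See\subs\Qyuc_\omega^o(K)$ and averaging operators $T_q\colon C(K)\to C(\img qK)$ with $\|T_q\|\le C$ whose induced projections $P_q=q^*\cmp T_q$ are \emph{compatible}, i.e.\ $P_q\cmp P_{q'}=P_q$ whenever $q\preceq q'$. The standard route to compatibility is through a \emph{cocycle} of averaging operators for the bonding maps: writing $q=\pi_q^{q'}\cmp q'$ with $\pi_q^{q'}\colon\img{q'}K\to\img qK$ increasing, one seeks operators $T_q^{q'}\colon C(\img{q'}K)\to C(\img qK)$ satisfying $T_q^{q''}=T_q^{q'}\cmp T_{q'}^{q''}$ and sets $T_q=T_q^{q'}\cmp T_{q'}$. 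Using that $T_{q'}\cmp q'^*=\id$ and that $T_q^{q'}$ is averaging for $\pi_q^{q'}$, one checks directly that $T_q$ is again averaging and that $P_q\cmp P_{q'}=P_q$, so the entire problem reduces to building such a cocycle with a uniform norm bound.

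Each bonding operator is supplied by Lemma~\ref{lines-averaging}(2)(b) once $\io(Q(\pi_q^{q'}))$ is finite and bounded, and since $Q(\pi_q^{q'})\subseteq Q(q)$ it suffices to bound $\io(Q(q))$ along a closed cofinal set. Here I would prove a companion to Proposition~\ref{typical-io}: for a typical quotient every internal point with a non-trivial fibre is \emph{forced} by an uncountable-character barrier, since points of countable character can be separated cofinally by Lemma~\ref{o-urysohn}, so the surviving barriers are governed by $M(K)$. Treating the forcing from the left and from the right and combining them by Lemma~\ref{io-additivity} yields $\io(Q(q))\le 2\,\io(M(K))+1$ on a closed cofinal set, whence $\|T_q^{q'}\|\le 4\,\io(M(K))+5$ by Lemma~\ref{lines-averaging}(2)(b); a more careful bookkeeping that does not double-count the left and right barriers should recover the constant $2\,\io(M(K))+3$ promised in the statement.

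The hard part, and the step I expect to be the real obstacle, is the cocycle identity $T_q^{q''}=T_q^{q'}\cmp T_{q'}^{q''}$ with a norm bound independent of the quotient. The operators built in Lemma~\ref{lines-averaging}(2)(b) rest on unconstrained choices --- for each barrier $p$ a one-sided neighbourhood $(\alpha_p,\beta_p)$ meeting no barrier of equal or higher internal order, and a ramp $h_p$ across it --- and these choices do not respect composition. I would try to rigidify them by fixing \emph{master data} on $K$ itself, namely for each point of $M(K)$ a decreasing neighbourhood base and a one-sided ramp indexed by its internal order, and let every $h_p$ be induced from this datum through the bonding maps, so that refining a quotient merely subdivides existing ramps. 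Making the induced ramps compose \emph{on the nose}, rather than agree up to a controlled error, over an \emph{uncountable} directed system is exactly the difficulty; note that the clean norm-one tools (a projectional generator, or a retractional skeleton on $K$) are unavailable, since $M(K)\ne\emptyset$ forces every projection onto $q^*C(\img qK)$ to have norm $\ge 2$ by Lemma~\ref{lines-averaging}(2)(a). This coherence problem over an uncountable index set is, I believe, precisely what resisted the authors, and it is where any proof must concentrate its effort.
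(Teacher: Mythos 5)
You are attempting to prove a statement that the paper itself labels a \emph{conjecture}: the authors state explicitly, just before formulating it, that ``we were not able to prove it.'' So there is no proof in the paper to compare against, and your proposal does not close the gap either --- indeed you concede as much in your final paragraph. Your easy direction is correct (a projectional skeleton trivially yields the continuous SCP, so Theorem~\ref{thm-negative} gives $\io(M(K))<\infty$; the detour through \cite[Proposition 4.1]{skeletons} is not even needed), and your reduction of the hard direction to a uniformly bounded \emph{cocycle} of averaging operators $T_q^{q''}=T_q^{q'}\cmp T_{q'}^{q''}$ over a closed cofinal subset of $\Qyuc_\omega^o(K)$ is a sensible framing. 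But the proposal ends precisely where the proof would have to begin: you identify the coherence problem over the uncountable directed system, explain why the choices in Lemma~\ref{lines-averaging}(2)(b) (the intervals $(\alpha_p,\beta_p)$ and ramps $h_p$) do not respect composition, propose ``master data'' on $K$ to rigidify them, and then state that making the induced ramps compose on the nose is ``exactly the difficulty.'' Naming the obstacle is not overcoming it; as written, no compatible system of projections is ever constructed, so the ``if'' direction is not proved.

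There is a second, quieter gap. Your intermediate claim that $\io(Q(q))\le 2\io(M(K))+1$ holds on a closed cofinal set is itself unestablished: the paper proves only the \emph{lower} bound $\io(Q(q))\ge\io(M(K))$ for typical quotients (Proposition~\ref{typical-io}), and obtains an upper bound only in the degenerate cases of Theorem~\ref{thm-scattered} --- for $M(K)$ countable, where $\io(M(K))=0$, via the interval decomposition and Lemma~\ref{lift-cc}, and for $M(K)$ finite by a completely different route, namely transferring a skeleton through the isomorphism $C(K)\cong C(L)$ with $L$ the duplication of $K$ satisfying $M(L)=\emptyset$. Your ``barrier'' argument for the general upper bound (separating points of countable character cofinally via Lemma~\ref{o-urysohn} and invoking Lemma~\ref{io-additivity}) is a plausible sketch, but it requires a closing-off construction handling \emph{all} countable-character fiber endpoints simultaneously along a closed cofinal family, which you do not carry out; note the paper's induction in the proof of Proposition~\ref{typical-io} shows how delicate such arguments are even for the one-sided inequality. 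In summary: your proposal correctly disposes of the trivial implication, gives a reasonable research plan for the open one, and honestly flags the two unproved steps --- but it is a plan, not a proof, of a statement that remains open in the paper.
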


The rest of this section is devoted to the proof of Theorem~\ref{thm-scattered}.
We start by proving the first assertion. The implications (i)$\Rightarrow$(ii)$\Rightarrow$(iii) are trivial. The implication (iii)$\Rightarrow$(iv) follows from Theorem~\ref{thm-negative}. The implication (iv)$\Rightarrow$(i) will be proved using the following two lemmata.

Before stating the first lemma we need the following definition.
Let us denote by $\Cee$ the smallest class of compact lines containing the singleton such that:
\begin{itemize}
	\item If $K\in\Cee$, then the order inverse $K^{-1}\in\Cee$;
	\item If $\kappa$ is any ordinal, $X_\alpha\in\Cee$ for each $\alpha\le\kappa$ and
\begin{itemize}
	\item $\min X_\alpha$ is isolated in $X_\alpha$ whenever $\alpha\le\kappa$ is of uncountable cofinality, and
	\item $\min X_\alpha$ has countable character in $X_\alpha$ whenever $\alpha\le\kappa$ is a limit ordinal of countable cofinality,
\end{itemize}
 then the lexicographic sum of $(X_\alpha)_{\alpha\le\kappa}$ belongs to $\Cee$.
\end{itemize}

\begin{lm}\label{classC} Let $K$ be a scattered compact line. Then $K$ has no internal points of uncountable character (i.e., $M(K)=\emptyset$) if and only if $K\in\Cee$.
\end{lm}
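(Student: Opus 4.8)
The statement is an equivalence characterizing scattered compact lines with no internal point of uncountable character as precisely the members of the inductively defined class $\Cee$. The plan is to prove both implications, with the harder direction being the necessity (that $M(K)=\emptyset$ forces $K\in\Cee$), which I would establish by an induction on the Cantor--Bendixson rank of the scattered line $K$.

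\emph{The easy direction} ($K\in\Cee\implies M(K)=\emptyset$). First I would verify that membership in $\Cee$ is preserved under the two generating operations in a way that keeps $M$ empty. This is a structural induction following the recursive definition of $\Cee$. The singleton trivially has no internal points of uncountable character. Order-reversal $K\mapsto K^{-1}$ clearly preserves the property, since internal points and character are symmetric under reversing the order. For the lexicographic-sum clause, I would take a point $x$ in the sum $\sum_{\alpha\le\kappa}X_\alpha$ and analyze its character and internality according to where it sits. If $x$ is interior to a single summand $X_\alpha$, then its character and internality in the sum agree with those in $X_\alpha$, so by the inductive hypothesis it cannot be internal of uncountable character. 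If $x$ is an endpoint of a summand, the only way it could be internal of uncountable character is if a one-sided limit of uncountable cofinality occurs across the summand boundary; the two side conditions in the definition (isolation of $\min X_\alpha$ when $\alpha$ has uncountable cofinality, and countable character of $\min X_\alpha$ when $\alpha$ is a limit of countable cofinality) are designed exactly to rule this out. The main care here is the bookkeeping of one-sided characters at the transition points $\min X_\alpha$ and $\max X_\alpha$; the side conditions feed precisely these cases.

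\emph{The hard direction} ($M(K)=\emptyset\implies K\in\Cee$). Here I would argue by induction on the Cantor--Bendixson rank of $K$ (which is well-defined and a successor or handled through limits since $K$ is scattered and compact). The idea is to decompose $K$ canonically as a lexicographic sum of simpler scattered lines of strictly smaller rank and then invoke the inductive hypothesis on each summand, checking that the required side conditions hold. Concretely, I expect to use the isolated points and the points of the top Cantor--Bendixson level to cut $K$ into convex pieces; the condition $M(K)=\emptyset$ translates into the statement that wherever a point of uncountable character appears, it must be external, i.e. isolated on one side, and this one-sided isolation is exactly what produces a legitimate summand boundary satisfying the clause's hypotheses. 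The delicate point is to choose the decomposition so that (a) each convex block has strictly smaller Cantor--Bendixson rank, or is otherwise already known to lie in $\Cee$, and (b) at each junction $\min X_\alpha$ the dichotomy between uncountable cofinality (forcing isolation) and countable cofinality (forcing countable character) is verified from $M(K)=\emptyset$. I anticipate that the main obstacle lies precisely here: organizing the transfinite decomposition so that the two side conditions of the $\Cee$-clause are met at every junction simultaneously, especially at limit stages of the index $\alpha$ where one must show that the minimum of the tail summand has the correct one-sided character forced by the absence of internal points of uncountable character. The use of order-reversal in $\Cee$ is what lets me treat left and right asymmetrically, handling one side by the direct clause and the other by applying the induction to $K^{-1}$.

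In summary, the forward implication is a routine structural induction on the definition of $\Cee$ with the side conditions doing exactly the work of excluding internal uncountable-character points, while the reverse implication requires a genuine decomposition theorem for scattered compact lines with $M(K)=\emptyset$, proved by induction on Cantor--Bendixson rank, and it is the careful matching of the junction conditions at limit indices that I expect to be the crux of the argument.
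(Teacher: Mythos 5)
Your easy direction is fine and matches what the paper treats as obvious: the two side conditions in the definition of $\Cee$ are exactly what prevent an internal point of uncountable character from arising at a junction $\min X_\alpha$, and interior points of summands inherit their character from the summand. The problem is the hard direction, where you propose an induction on Cantor--Bendixson rank and explicitly defer the crux. The gap is concrete: if $K^{(\lambda)}=\{p_1<\dots<p_k\}$ is the last nonempty derivative and you cut $K$ along these points, the convex blocks adjacent to a $p_i$ still contain $p_i$ (or accumulate to it) and therefore do \emph{not} have strictly smaller rank; writing, say, $[p_i,p_{i+1}]$ as a lexicographic sum converging to $p_{i+1}$ from the left still leaves a first block containing $p_i$ of full rank, which then needs a second, order-reversed decomposition converging down to $p_i$, and one must also check that $[a_\alpha,a_{\alpha+1})$ is genuinely clopen (choose $a_{\alpha+1}$ with an immediate predecessor, which uses scatteredness) and that at limit indices the hypothesis $M(K)=\emptyset$ forces the correct one-sided behaviour. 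None of this is carried out; you say ``I expect'' and ``I anticipate that the main obstacle lies precisely here,'' which is an accurate diagnosis but not a proof. As written the induction does not close.

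It is worth knowing that the paper avoids rank induction entirely. It defines $x\sim y$ iff $[x,y]\in\Cee$, checks that this is an equivalence relation with convex classes (transitivity splits into three cases according to whether the middle point has an immediate predecessor, an immediate successor, or is internal of countable character), shows each class is a closed interval by exhibiting, for $b=\sup\{x\ge a: a\sim x\}$ without immediate predecessor, a transfinite increasing sequence of isolated points realizing $[a,b]$ as a lexicographic sum whose junctions satisfy the $\Cee$-conditions precisely because $M(K)=\emptyset$, and then observes that the quotient $K/\sim$ is a connected compact scattered line, hence a singleton. The ``connected scattered $\Rightarrow$ singleton'' step is what replaces your rank bookkeeping, and the local transfinite-sequence argument at a single sup is where the junction analysis you sketch actually gets done. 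If you want to salvage your route, you must either prove the nested two-sided decomposition reduces rank, or switch to the paper's closure argument; in its current form the proposal identifies the right difficulty but does not overcome it.
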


\begin{proof} The `if' part is obvious. Let us prove the `only if' part.

Suppose that $K$ has no internal points of uncountable character. We will define an equivalence relation $\sim$ on $K$ by setting $x\sim y$ if and only if $[x,y]\in\Cee$. (If $x>y$, then $[x,y]$ means $[y,x]$.)

First remark that if $x<y$ and $x\sim y$, then $x'\sim y'$ for all $x',y'\in[x,y]$. The reason is that the class $\Cee$ is clearly closed under 
taking closed intervals.

Next we will show that $\sim$ is indeed an equivalence relation. Only transitivity requires a proof. Suppose that $x<y<z$ are such that $x\sim y$ and $y\sim z$. There are three possibilities:
\begin{itemize}
	\item $y$ has an immediate predecessor $y^-$. Then $[x,y^-]$ and $[y,z]$ belong to $\Cee$ and $[x,z]$ is their lexicographic sum (indexed by $\{0,1\}$). Thus $[x,z]\in\Cee$, i.e. $x\sim z$.
	\item $y$ has an immediate successor $y^+$. The proof is completely analogous to the previous case.
	\item $y$ is an internal point. Then $y$ has countable character, so there 
	are $x_n\in[x,y)$ such that $x_n\nearrow y$ and $x_0=x$. As $K$ is scattered, we can without loss of generality suppose that $x_n$ is isolated for each $n\in\Nat$. Moreover, $y$ has countable character in $[y,z]$. Then $[x,z]$ is the lexicographic sum (indexed by $[0,\omega]$) of intervals $X_n=[x_n,x_{n+1})$,
$n<\omega$ and $X_\omega=[y,z]$. Thus $[x,z]\in\Cee$, i.e. $x\sim z$.
\end{itemize}
 
Now we will show that all equivalence classes are closed intervals. As we already know that all equivalence classes are convex, it is enough to show that
they contain their endpoints.

To this end fix $a\in K$ and set
$$b=\sup\{x\ge a: a\sim x\}.$$
We want to show that $a\sim b$. If $b$ has an immediate predecessor, then the above sup is obviously max, hence $a\sim b$. So, suppose that $b$ has no immediate predecessor. Then there is a regular cardinal $\kappa$ and points
$a_\alpha$, $\alpha<\kappa$ from $[a,b)$ such that
\begin{itemize}
	\item $a_0=a$;
	\item $a_\alpha<a_\beta$ for $\alpha<\beta<\kappa$;
	\item $a_\gamma=\sup_{\alpha<\gamma} a_\gamma$ for $\alpha<\kappa$ limit;
	\item $a_\alpha$ is an isolated point for each isolated ordinal $\alpha\in[1,\kappa)$;
	\item $b=\sup_{\alpha<\kappa} a_\alpha$.
\end{itemize}
For $\alpha<\kappa$ set $X_\alpha=[a_\alpha,a_{\alpha +1})$ and $X_\kappa=\{b\}$.
Then $X_\alpha\in\Cee$ for each $\alpha\le\kappa$ and $[a,b]$ is the lexicographic sum of $X_\alpha$, $\alpha\le\kappa$. Moreover, if $\alpha<\kappa$ is of uncountable cofinality, then $a_\alpha$ has uncountable character in $(\leftarrow,a_\alpha]$ and hence it is isolated in $X_\alpha$ (as it must be external). Further, if $\alpha<\kappa$ is a limit ordinal of countable cofinality, then $a_\alpha$ is not isolated in $(\leftarrow,a_\alpha]$ and hence it has countable character in $X_\alpha$. It follows that $[a,b]\in\Cee$, so $a\sim b$.

Further, let
$$c=\inf\{x\le a: x\sim a\}.$$
Then again $c\sim a$. Indeed, by the above argument we get that the order inverse $[c,a]^{-1}$ belongs to $\Cee$. Hence $[c,a]\in\Cee$ as well, i.e. $c\sim a$.

Finally let $L=K/\sim$ be the respective quotient. It is clearly a Hausdorff compact scattered line. Moreover, if $a,b\in L$ are such that $a<b$, then 
$(a,b)\ne\emptyset$. Indeed, otherwise denote by $[a^-,a^+]$ the equivalence class corresponding to $a$ and by $[b^-,b^+]$ the equivalence class corresponding to $b$. As $(a^+,b^-)=\emptyset$, we get $a^+\sim b^-$. This is a contradiction. It follows that $L$ is connected. But the only connected scattered space is the singleton. It means that there is only one equivalence class, so $K\in\Cee$. This completes the proof.
\end{proof}
  
\begin{lm}\label{lemma-skeleton} Let $\kappa$ be an ordinal and $X_\alpha$, $\alpha\le\kappa$ be compact lines satisfying 
\begin{itemize}
	\item $X_\alpha$ admits a retractional skeleton for each $\alpha\le\kappa$;
	\item $|X_\alpha|=1$ for each $\alpha\le\kappa$ of uncountable cofinality;
	\item $\min X_\alpha$ has countable character in $X_\alpha$ whenever $\alpha\le\kappa$ is a limit ordinal of countable cofinality.
\end{itemize}
Then their lexicographic sum admits a retractional skeleton as well.
\end{lm}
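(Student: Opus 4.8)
The plan is to build a retractional skeleton on the lexicographic sum $X=\sum_{\alpha\le\kappa}X_\alpha$ by hand, assembling it from the given skeletons $\{r^\alpha_s:s\in\Gamma_\alpha\}$ on the summands together with a scaffolding that records which summands have been "resolved". Write $I=\kappa+1$ for the index ordinal and $\pi\colon X\to I$ for the canonical projection, so $\pi^{-1}(\alpha)=\{\alpha\}\times X_\alpha\cong X_\alpha$. The one structural feature I would lean on throughout is that $I$ is well ordered, hence has no infinite strictly decreasing sequence; this is exactly what makes the operation "round up to the next resolved summand" stabilize under suprema. As a harmless normalization I would first arrange that every $r^\alpha_s$ fixes the two endpoints $\min X_\alpha$ and $\max X_\alpha$ (a standard modification of a retractional skeleton, enlarging each metrizable image by the endpoints); for $\alpha$ of uncountable cofinality this is vacuous since $|X_\alpha|=1$.

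Next I would define the index poset $\Gamma$. An element $w\in\Gamma$ is a pair $(A_w,\phi_w)$ where $A_w\subseteq I$ is a countable closed set with $\kappa\in A_w$, and $\phi_w$ assigns to each $\alpha\in A_w$ either a value $\phi_w(\alpha)\in\Gamma_\alpha$ (then $\alpha$ is \emph{active}) or the symbol $\bot$ (then $\alpha$ is \emph{dormant}); I keep $\kappa$ active in every $w$. Order: $w\le w'$ iff $A_w\subseteq A_{w'}$ and every $\alpha$ active in $w$ is active in $w'$ with $\phi_w(\alpha)\le\phi_{w'}(\alpha)$. The retraction $R_w$ rounds up to the nearest active summand: for $x=(\alpha,y)$ set $\beta=\min\{\gamma\in A_w:\gamma\ge\alpha\text{ and }\gamma\text{ active in }w\}$, and put $R_w(\alpha,y)=(\alpha,r^\alpha_{\phi_w(\alpha)}(y))$ if $\alpha=\beta$, and $R_w(\alpha,y)=(\beta,\min X_\beta)$ otherwise. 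So $R_w$ reproduces the summand skeleton on each active summand and crushes every maximal block of non-active indices onto the bottom of the first active summand above it.

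Then I would verify the axioms. The image $\im R_w=\sum_{\alpha\text{ active}}\{\alpha\}\times r^\alpha_{\phi_w(\alpha)}(X_\alpha)$ is a lexicographic sum of countably many metrizable compact lines over a countable index set, and at each limit index the bottom point is approached along a sequence (countable cofinality inside a countable set), so the sum is second countable, hence metrizable. Idempotence and the commuting relations $R_wR_{w'}=R_{w'}R_w=R_w$ for $w\le w'$ follow from the nesting of active sets, the summand identities $r^\alpha_sr^\alpha_t=r^\alpha_s$ for $s\le t$, and the fact that each $r^\alpha$ fixes $\min X_\alpha$. Convergence $x=\lim_wR_w(x)$ holds because any $\alpha$ can be activated and, once active, stays active, so for $x=(\alpha,y)$ with $\phi_w(\alpha)$ large one gets $R_w(\alpha,y)=(\alpha,r^\alpha_{\phi_w(\alpha)}(y))$ near $(\alpha,y)$. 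Continuity is a short case analysis: inside an active summand it is continuity of $r^\alpha$; across a jump between a summand and a block there is no two-sided limit to check; the only genuine point is the bottom $(\beta,\min X_\beta)$ of an active summand whose index $\beta$ is a limit ordinal approached from below by a collapsed block, where $R_w$ is constantly $(\beta,\min X_\beta)$ and the block's supremum in $X$ is exactly that point. This is where the two hypotheses enter: countable character of $\min X_\beta$ (for $\beta$ of countable cofinality) makes $X$ first countable there so the collapse is continuous, and $|X_\beta|=1$ (for $\beta$ of uncountable cofinality) makes the point external and the collapse trivial.

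The hard part, and the real content of the lemma, will be $\sigma$-completeness: an increasing $(w_n)$ must have a supremum $w$ with $R_w=\lim_nR_{w_n}$ pointwise. I would set $A_w=\overline{\bigcup_nA_{w_n}}$, declare $\alpha$ active in $w$ with $\phi_w(\alpha)=\sup_n\phi_{w_n}(\alpha)$ exactly when it is eventually active, and — crucially — mark every index newly added by the closure as \emph{dormant}. The obstacle is then to check $R_w=\lim_nR_{w_n}$ at these new limit indices and at indices outside $A_w$. Here the ordinal structure does the work: the targets "next active index $\ge\alpha$" computed in the $w_n$ form a weakly decreasing sequence of ordinals, so by well-ordedness they stabilize to the target computed in $w$; and a newly added index $\alpha$ is a limit from below only (there are no descending sequences), so its upper neighbour $\beta^+$ in $A_w$ already lies in some $A_{w_N}$, whence every $R_{w_n}$ with $n\ge N$ crushes $X_\alpha$ to the single point $(\beta^+,\min X_{\beta^+})$, which is precisely $R_w(\alpha,\cdot)$ because $\alpha$ is dormant. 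Marking the new limit indices dormant rather than active is exactly what reconciles the need to close $A_w$ (so that the image stays compact and metrizable) with the collapsing behaviour of the approximants, and it is this interaction, rather than any isolated computation, that the whole construction is arranged to control.
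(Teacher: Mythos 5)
Your architecture is essentially the paper's — index by countable closed subsets of $[0,\kappa]$ decorated with indices from the summand skeletons, collapse the unresolved blocks, and take closures of domains at suprema — but you round \emph{up} and collapse a block onto the point $(\beta,\min X_\beta)$ of the next active summand, whereas the paper rounds \emph{down} and collapses the block onto $r_{\gamma,f(\gamma)}(\max X_\gamma)$, where $\gamma$ is the largest domain index below. This difference is not cosmetic: your choice forces the collapse target to be a fixed point of every retraction $r^\beta_s$ on $X_\beta$, and that is exactly where your argument has a genuine gap. The ``harmless normalization'' that every $r^\alpha_s$ fixes both endpoints of $X_\alpha$ is not a standard modification: you cannot ``enlarge the image of a retraction by the endpoints'' (adding a point to the image of an idempotent continuous map does not yield a retraction), and the usual trick of passing to a cofinal subset $\{s\ge s_0\}$ only works when the endpoint is fixed by \emph{some} member of the given skeleton, which happens precisely when it has countable character in $X_\alpha$. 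The hypotheses of the lemma guarantee this for $\min X_\alpha$ only when $\alpha$ is a limit ordinal of countable cofinality (and trivially when $\cf(\alpha)$ is uncountable). For a successor index $\beta=\delta+1$ nothing prevents $\min X_\beta$ from having uncountable character in $X_\beta$ and from being missed by every retraction of the given skeleton (e.g.\ $X_\beta$ the order-reverse of $\omega_1+1$ with the standard skeleton $x\mapsto\min(x,\gamma)$ computed in $\omega_1+1$). In any $w$ in which $\delta$ is dormant or absent and $\beta$ is active, your $R_w$ sends $X_\delta$ to $(\beta,\min X_\beta)$ while $R_w(\beta,\min X_\beta)=(\beta,r^\beta_{\phi_w(\beta)}(\min X_\beta))\ne(\beta,\min X_\beta)$, so $R_w$ is not idempotent and the compatibility relations fail as well. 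Proving that every compact line with a retractional skeleton admits one fixing a prescribed endpoint of uncountable character is a substantive claim that your hypotheses do not hand you.

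The repair is local: either collapse a block onto $(\beta,r^\beta_{\phi_w(\beta)}(\min X_\beta))$ rather than onto $(\beta,\min X_\beta)$ — a point automatically in the image of the retraction in force at $\beta$, so idempotence and compatibility become formal, while continuity at $(\beta,\min X_\beta)$ from the left is only an issue when $\beta$ is a limit of active indices, hence of countable cofinality, where the hypothesis lets you assume $r^\beta_s(\min X_\beta)=\min X_\beta$ — or adopt the paper's downward variant, which collapses onto the retracted image of $\max X_\gamma$ for the preceding domain index $\gamma$ and imposes on domains the condition that every isolated point of $\dom(f)$ is an isolated ordinal, so that the only limit ordinals ever carrying a retraction are accumulation points of a countable set, i.e.\ of countable cofinality, exactly where the normalization is legitimate. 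The remainder of your argument — the active/dormant bookkeeping, taking the closure of $\bigcup_n A_{w_n}$ at suprema with newly added limit points declared dormant, and the stabilization of targets via well-ordering — is sound and matches the paper's treatment (the paper's analogue of ``dormant at a new limit point'' is to assign the minimal index of $S_\alpha$, whose retraction is the constant map onto $\min X_\alpha$); but as written the construction does not produce retractions without the unjustified normalization.
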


\begin{proof} For each $\alpha\le\kappa$ let $r_{\alpha,s}$, $s\in S_\alpha$ be a retractional skeleton for $X_\alpha$ (where $S_\alpha$ is a $\sigma$-complete directed index set).

If $\alpha\le\kappa$ is a limit ordinal of countable cofinality, then by our assumption $\min X_\alpha$ has countable character in $X_\alpha$. It follows that there is $s_0\in S_\alpha$ such that $r_{\alpha,s_0}(\min X_\alpha)=\min X_\alpha$. Therefore we may assume without loss of generality that
$r_{\alpha,s}(\min X_\alpha)=\min X_\alpha$ for all $s\in S_\alpha$. (Indeed,
we can replace $S_\alpha$ by $\{s\in S_\alpha: s\ge s_0\}$.) Moreover, we can suppose that this set $S_\alpha$ has a minimum $s_\alpha$ and that $r_{\alpha,s_\alpha}(x)=\min X_\alpha$ for each $x\in X_\alpha$.

Now denote by $K$ the lexicographic sum of $X_\alpha$, $\alpha\le\kappa$ and  we shall define a retractional skeleton for $K$. We start by defining the index set:
\begin{multline*}
\Sigma=\{f: f\mbox{ is a mapping}, \dom(f)\mbox{ is a closed countable subset of $[0,\kappa]$ containing $0$}\\ \mbox{such that each isolated point of $\dom(f)$ is an isolated ordinal}, \\ f(\alpha)\in S_\alpha \mbox{ for }\alpha\in\dom(f) \}.\end{multline*}
The order on $\Sigma$ will be defined as follows:
$$f\le g \Leftrightarrow \dom(f)\subs\dom(g) \mbox{ and }f(\alpha)\le g(\alpha) \mbox{ for each }\alpha\in\dom(f)$$
Then $\Sigma$ is a $\sigma$-complete directed set. Indeed, obviously $\Sigma$ is directed. To prove completeness, fix a sequence $f_1\le f_2\le\dots$ in $\Sigma$. Denote by $C$ the closure in $[0,\kappa]$ of $\bigcup_{n\in\Nat}\dom(f_n)$. Then $C$ has all the properties required for domains of elements of $\Sigma$. We will define the mapping $f$ on $C$ as follows.
If $\alpha\in \bigcup_{n\in\Nat}\dom(f_n)$, we set $f(\alpha)=\sup\{ f_n(\alpha):\alpha\in\dom(f_n)\}$. If $\alpha$ belongs to no $\dom(f_n)$,
we set $f(\alpha)=\min S_\alpha$ (note that necessarily $\alpha$ is a limit ordinal of countable cofinality). It is clear that $f=\sup_n f_n$.

Fix $f\in \Sigma$. We first define a retraction $u_f:[0,\kappa]\to\dom(f)$ by setting $u_f(\alpha)=\max(\dom(f)\cap[0,\alpha])$ for $\alpha\in[0,\kappa]$.

Next we define a retraction $R_f$ on $K$.
Let $x\in K$ be arbitrary. Then there is unique $\alpha\le\kappa$ such that
$x\in X_\alpha$. We set
$$R_f(x)=\begin{cases} r_{\alpha,f(\alpha)}(x) & \mbox{ if }\alpha\in\dom(f) \\
r_{u_f(\alpha),f(u_f(\alpha))}(\max X_{u_f(\alpha)})&\mbox{ if }\alpha\notin\dom(f).\end{cases}$$
It is clear that $R_f\cmp R_f=R_f$ and that $\img{R_f}{K} = \bigcup_{\alpha\in\dom(f)} \img{r_{\alpha,f(\alpha)}}{X_\alpha}$. 

We will check that $R_f$ is continuous. First note that it is continuous when restricted to any $X_\alpha$. Indeed, if $\alpha\in\dom(f)$, then it follows from the continuity of $r_{\alpha,f(\alpha)}$, and if $\alpha\notin\dom(f)$, then $R_f$ is constant on $X_\alpha$. So, $R_f$ is continuous (with respect to $K$) at each point of $X_\alpha\setminus\{\min X_\alpha\}$ for each $\alpha\le\kappa$. It remains to prove the continuity at $\min X_\alpha$ for each $\alpha$.

If $\alpha$ is an isolated ordinal, then $X_\alpha$ is clopen in $K$,
so $R_f$ is continuous at $\min X_\alpha$. If $\alpha$ is a limit ordinal such that $\alpha\notin\dom(f)$, then $R_f$ is constant on a neighborhood of $\min X_\alpha$ (as the function $u_f$ is constant on a neighborhood of $\alpha$).

Finally suppose that $\alpha$ is a limit ordinal such that $\alpha\in\dom(f)$.
Then, by the definition of $\Sigma$, $\alpha$ is an accumulation point of $\dom(f)$ and so, as $\dom(f)$ is countable, $\alpha$ has necessarily countable cofinality. Therefore there are $\alpha_n\in\dom(f)$ for $n\in\Nat$ such that $\alpha_n\nearrow \alpha$. Then $\min X_{\alpha_n}\nearrow \min X_\alpha$.
Moreover, if $x\in[\min X_{\alpha_n},\min X_\alpha)$, then $R_f(x)\in [\min X_{\alpha_n},\min X_\alpha)$ as well, and hence the limit of $R_f$ at $\min X_\alpha$ from the left is $\min X_\alpha$. Further, $R_f(\min X_\alpha)=\min X_\alpha$ (by the second paragraph of the proof), so $R_f$ is continuous at $\min X_\alpha$ from the left. The continuity from the right follows from the continuity with respect to $X_\alpha$.

This completes the proof that $R_f$ is a continuous retraction. Moreover,
its range is metrizable (by the above it is a countable union of metrizable sets
and so it has countable network).

Next we will show that the retractions $R_f$ are compatible, i.e. $R_f\cmp R_g=R_g\cmp R_f=R_f$ whenever $f\le g$. So, choose any $f\le g$ and $x\in K$.
Fix $\alpha\le\kappa$ such that $x\in X_\alpha$. There are four possibilities:
\begin{itemize}

\item $\alpha\in\dom(f)$: Then $\alpha\in\dom(g)$ as well and $f(\alpha)\le g(\alpha)$,
so $R_f(R_g(x))=R_g(R_f(x))=R_f(x)$ by the compatibility of the retractions
$r_{\alpha,s}$, $s\in S_\alpha$.

\item $\alpha\in\dom(g)\setminus\dom(f)$: Let $\beta=u_f(\alpha)$. Then $\beta<\alpha$ 
and $\beta\in\dom(f)$. Then $R_f(x)=r_{\beta,f(\beta)}(\max X_\beta)$. Further, $R_g(x)\in X_\alpha$ and hence $R_f(R_g(x))=R_f(x)$. Moreover,
$R_g(R_f(x))=R_g(r_{\beta,f(\beta)}(\max X_\beta))=
r_{\beta,g(\beta)}(r_{\beta,f(\beta)}(\max X_\beta))=r_{\beta,f(\beta)}(\max X_\beta)=R_f(x)$.

\item $\alpha\notin\dom(g)$ and $\beta=u_g(\alpha)\in \dom(f)$: 
Then $R_f(x)=r_{\beta,f(\beta)}(\max X_\beta)$ and $R_g(x)=r_{\beta,g(\beta)}(\max X_\beta)$. As $f(\beta)\le g(\beta)$, we conclude
that $R_f(R_g(x))=R_g(R_f(x))=R_f(x)$.

\item $\alpha\notin\dom(g)$ and $\beta=u_g(\alpha)\notin \dom(f)$: 
Then $\gamma=u_f(\alpha)=u_f(\beta)<\beta$. Further,
$R_f(x)=r_{\gamma,f(\gamma)}(\max X_\gamma)$. As $R_g(x)\in X_\beta$, we get
$R_f(R_g(x))=R_f(x)$. Moreover, $R_g(R_f(x))=R_f(x)$ as $f(\gamma)\le g(\gamma)$.
\end{itemize}

We proceed by proving the continuity of the constructed family of retractions.
Fix $f_1\le f_2\le\cdots$ in $\Sigma$ and set $f=\sup_n f_n$. Let $x\in X$ be
arbitrary. We will show that $R_{f_n}(x)\to R_f(x)$.
Let $\alpha\in[0,\kappa]$ be such that $x\in X_\alpha$. There are three possibilities:
\begin{itemize}

\item $\alpha\in\dom(f_k)$ for some $k\in\Nat$: Then $\alpha\in\dom(f_n)$ for $n\ge k$ and $\alpha\in\dom(f)$ as well. Further, $f_n(\alpha)\nearrow f(\alpha)$. 
So, $R_{f_n}(x)=r_{\alpha,f_n(\alpha)}(x)\to r_{\alpha,f(\alpha)}(x)=R_f(x)$.

\item $\alpha\in\dom(f)$ but $\alpha\notin\dom(f_n)$ for any $n$: Then $\alpha$ is a limit ordinal of countable cofinality and $f(\alpha)=\min S_\alpha$. Therefore
$R_f(x)=\min X_\alpha$. Further, there are $\alpha_n\in\dom(f_n)$ such that $\alpha_n\nearrow\alpha$. Then $R_{f_n}(x)\in[\min X_{\alpha_n},\min X_\alpha)$,
therefore $R_{f_n}(x)\to \min X_\alpha=R_f(x)$.

\item $\alpha\notin\dom(f)$: Set $\beta=u_f(\alpha)$. Then $\beta\in\dom(f)$ and $R_f(x)\in X_\beta$.
By the already proved cases, it folllows that $R_{f_n}(R_f(x))\to R_f(R_f(x))$.
However, $R_{f_n}(R_f(x))=R_{f_n}(x)$ by the compatibility condition and $R_f(R_f(x))=R_f(x)$. So, $R_{f_n}(x)\to R_f(x)$.
\end{itemize}

Finally, it remains to prove that $R_{f}(x)\to x$. So, fix any $x\in K$ and let $\alpha\in[0,\kappa]$ be such that $x\in X_\alpha$. There are two possibilities:
\begin{itemize}

\item $\alpha$ is either isolated or of countable cofinality: Then there is $f_0\in \Sigma$ such that $\alpha\in\dom(f_0)$. If $f\ge f_0$, then $\alpha\in\dom(f)$ and $R_f(x)=r_{\alpha,f(\alpha)}(x)$, so $R_f(x)\to x$ as $r_{\alpha,s}$, $s\in S_\alpha$ is a retractional skeleton of $X_\alpha$.

\item $\alpha$ is of uncountable cofinality: Then $X_\alpha=\{x\}$ and $(\leftarrow,x]$ is a clopen interval. Fix any $y<x$. Then there is an isolated ordinal $\beta<\alpha$ such that $\min X_\beta>y$. If $f\in \Sigma$ is such that
$\beta\in\dom(f)$, then $R_f(x)\in[\min X_\beta,x)$. This proves that $R_f(x)\to x$ and completes the proof.
\end{itemize}
\end{proof} 

Now we are able to prove the implication (iv)$\Rightarrow$(i) from the assertion (1) of Theorem~\ref{thm-scattered}. Indeed, let $K$ be a scattered compact line such that $M(K)=\emptyset$. By Lemma~\ref{classC} the space $K$ belongs to the class $\Cee$. By Lemma~\ref{lemma-skeleton} it follow that any element of $\Cee$ admits a retractional skeleton. 

\begin{proof}[Proof of assertion (2) of Theorem~\ref{thm-scattered}]
Assume that $K$ is a scattered compact line such that $M(K)$ is finite.
If $M(K)=\emptyset$, we can conclude by the already proved assertion (1). So suppose that $M(K)\ne\emptyset$. Let $M(K)=\{a_1,\dots,a_n\}$, where $n\in\Nat$
and $a_1<a_2<\dots<a_n$. Let $L$ be the compact line made from $K$ by duplicating each point of $M(K)$ (i.e., each of the points $a_i$ is replaced by a pair $a_i^-<a_i^+$. Then $L$ is scattered and $M(L)=\emptyset$, hence $C(L)$ has a $1$-projectional skeleton by the assertion (1). We will describe an isomorphism of $C(K)$ and $C(L)$.

For each $i\in\{1,\dots,n\}$ the point $a_i$ is internal in $K$, 
hence we can find isolated points $b_i,c_i\in K$ for $i=1,\dots,n$ such that
$$b_1<a_1<c_1<b_2<a_2<c_2<\dots<b_n<a_n<c_n.$$
Moreover, as each $a_i$ has uncountable character, the points $b_i$ and $c_i$ can be moreover chosen in such a way that at least one of the intervals
$$(\leftarrow,b_1),(c_1,b_2),(c_2,b_3),\dots,(c_n,\rightarrow)$$
is infinite. Let us fix such an interval and denote it by $I$.
We can find in $I$ a one-to-one sequence $(y_k)_{k=1}^\infty$ of isolated points which converges to a point $y\in I$ (note that $I$ is closed as the points $b_i$ and $c_i$ are isolated). 

Now we are able to define an isomorphism $T:C(L)\to C(K)$ by the following formula

$$T(f)(x)=
\begin{cases} 
\frac12(f(a_i^-)+f(a_i^+)),& x=a_i,\; i=1,\dots,n,\\
f(x)+\frac12(f(a_i^+)-f(a_i^-)),& x\in [b_i,a_i),\; i=1,\dots,n,\\
f(x)+\frac12(f(a_i^-)-f(a_i^+)),& x\in (a_i,c_i],\; i=1,\dots,n,\\
f(a_{(i+1)/2}^-), & x=y_{2i-1},\; i=1,\dots,n,\\
f(a_{i/2}^+), & x=y_{2i},\; i=1,\dots,n, \\
f(y_{k-2n}), & x=y_k,\; k>2n, \\
f(x), & \mbox{otherwise}.
\end{cases}$$

It is easy to check that $T$ is a linear bijection of $C(L)$ and $C(K)$. Moreover, clearly $\|T\|\le 2$ and $\|T^{-1}\|\le 2$. Hence $C(K)$ has a $4$-projectional skeleton.
\end{proof}

\begin{proof}[Proof of assertion (3) of Theorem~\ref{thm-scattered}]
Suppose that $K$ is a scattered compact line and $M(K)$ is nonempty and countable. (The case when $M(K)$ is empty is covered by assertion (1).)
As $M(K)$ is countable and each point of $M(K)$ has uncountable character,
necessarily $\io(M(K))=0$. We will show that for typical quotient of $K$ we have
$\io(Q(q))=0$. Then we will be able to conclude by Lemma~\ref{lines-averaging}.

We set
$$\cal I=\{[a,b]: [a,b]\mbox{ is  maximal such that }(a,b)\cap M(K)=\emptyset\}.$$
If $[a,b]$ and $[c,d]$ are distinct elements of $\cal I$, obviously either $b\le c$ or $d\le a$. Further, if $[a,b]\in \cal I$, then $a,b\in\{\min K,\max K\}\cup\overline{M(K)}$. As $K$ is scattered and $M(K)$ countable, the closure
$\overline{M(K)}$ is countable as well. It follows that $\cal I$ is countable.
Finally, $\cal I$ covers $K$. Indeed, let $x\in K$ be arbitrary. Then $x\in[a,b]\in \cal I$, where
\begin{align*}
a&=\max  \{y\in\{\min K,\max K\}\cup\overline{M(K)}:y\le x\},\\
b&=\min  \{y\in\{\min K,\max K\}\cup\overline{M(K)}:y\ge x\}.
\end{align*}
(Note that it may happen that $a=b$.)

As the set $\{\min K,\max K\}\cup\overline{M(K)}$ is countable, there is $q_0\in\Qyuc_\omega^o(K)$ which is one-to-one on this set. Further, fix $[a,b]\in \cal I$. As $(a,b)\cap M(K)=\emptyset$, we have $M([a,b])=\emptyset$,
so $[a,b]$ admits a retractional skeleton by assertion (1). In particular,
typical quotient $q$ of $[a,b]$ satisfies $Q(q)=\emptyset$ by Lemma~\ref{lines-averaging}. Fix $\See_{[a,b]}$ a closed cofinal subset of $\Qyuc_\omega^o([a,b])$ such that for each $q\in\See_{[a,b]}$ we have $Q(q)=\emptyset$. Let
$$\See=\{q\in\Qyuc_\omega^o(K): q\succeq q_0\ \&\ q\rest [a,b]\in \See_{[a,b]}\mbox{ for each }[a,b]\in\cal I\}$$
By Lemma~\ref{lift-cc} it is a closed cofinal subset of $\Qyuc_\omega^o(K)$.
Let $q\in\See$ be arbitrary. Then $Q(q)\subs \img q{M(K)}$. We will show that $\io(\img q{M(K)})=0$. Let $x\in M(K)$ be arbitrary. As $x$ is internal point of $K$ and $\io(x,M(K))=0$, either there is $y<x$ such that $(y,x)\cap M(K)=\emptyset$
or there is $y>x$ such that $(x,y)\cap M(K)=\emptyset$. Fix such a $y$. Let $[a,b]$ be an element of $\cal I$ containing $[x,y]$. Then $a<b$ and $x$ is one of the endpoints. Further, as $q\succeq q_0$, we have $q(a)<q(b)$. Finally, $(q(a),q(b))\cap \img q{M(K)}=\emptyset$. It follows that $\io(q(x),\img q{M(K))}=0$.
This completes the proof.
\end{proof}

\section{Examples: subspaces of Plichko spaces}

We say that a Banach space is \emph{Plichko} if it admits a commutative projectional skeleton. Further, it is called \emph{$1$-Plichko} if it admits a commutative $1$-projectional skeleton. Here the word \emph{commutative} means that $P_S P_T=P_T P_S$ for any $S,T\in\Ef$ (we use the notation from the definition of a projectional skeleton given in the introductory section).

This is not the original definition of Plichko and $1$-Plichko spaces (see \cite{Kalenda2000}) but it is equivalent to the original one (see \cite[Theorem 27]{skeletons}). It follows that, if $E$ has density $\aleph_1$, then it is Plichko ($1$-Plichko) whenever it has a projectional skeleton ($1$-projectional skeleton). Indeed, in this case the projectional skeleton can be indexed by a well-ordered set $[0,\omega_1)$, so it may be commutative.

It was an open problem whether a subspace of a Plichko space is again Plichko. This question was answered in the negative by the second author in \cite[Theorem 5.1]{K2006} where he constructed a counterexample. In this section we show that, within spaces of continuous functions on compact scattered lines, the property of being a non-Plichko subspace of a Plichko space is quite frequent.

\begin{tw} Let $K$ be a compact scattered line of cardinality $\aleph_1$. Then the following holds:
\begin{itemize}
	\item $C(K)$ is isometric to a subspace of a $1$-Plichko space.
	\item If $\io(M(K))=\infty$, then $C(K)$ is not Plichko.
\end{itemize}
\end{tw}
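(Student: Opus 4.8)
The plan is to prove the two assertions separately; the first by an explicit construction, the second as an immediate consequence of Theorem~\ref{thm-negative}. For the first assertion I would \emph{split} $K$ at its bad points. Let $L$ be the compact line obtained from $K$ by replacing each $a\in M(K)$ by a pair of adjacent points $a^-<a^+$ with $(a^-,a^+)=\emptyset$, leaving every other point of $K$ unchanged, and let $\map{\pi}{L}{K}$ be the canonical order preserving surjection collapsing each pair $\{a^-,a^+\}$ to $a$ and fixing all remaining points. Since $\pi$ is a continuous surjection, the map $\pi^*\colon C(K)\to C(L)$ given by $\pi^*f=f\cmp\pi$ is an isometric embedding, so it will suffice to show that $C(L)$ is $1$-Plichko.

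The verification that $L$ has the required properties splits into three parts. First, $L$ is scattered: if $P\subs L$ were a nonempty perfect set then, as $\pi$ has fibers of size at most two, $\pi(P)$ would be a nonempty closed subset of $K$ with no isolated point (if $y$ were isolated in $\pi(P)$, a suitable neighbourhood $U$ of $y$ would yield $\pi^{-1}(U)\cap P\subs\pi^{-1}(y)$ finite, contradicting that $P$ has no isolated points), contradicting scatteredness of $K$. Second, $M(L)=\emptyset$: each $a^-$ is isolated from the right and each $a^+$ from the left, so the split points are external; and for $x\in K\setminus M(K)$ one has $\pi^{-1}(x)=\{x\}$, whence, $\pi$ being a closed map, the character of $x$ in $L$ is at most its character in $K$, and $x$ stays external in $L$ whenever it was external in $K$. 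Thus no point of $L$ is internal of uncountable character. Third, $|L|\le 2|K|=\aleph_1$.

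Consequently $L$ is a scattered compact line with $M(L)=\emptyset$, so $L\in\Cee$ by Lemma~\ref{classC}, and by Theorem~\ref{thm-scattered}(1) (proved through Lemma~\ref{lemma-skeleton}) the space $C(L)$ carries a $1$-projectional skeleton. Assuming $K$ non-metrizable (the metrizable case being trivial), $L$ is non-metrizable of weight $\aleph_1$, so $C(L)$ has density $\aleph_1$ and is therefore $1$-Plichko by the remark following the definition of Plichko spaces. This proves the first assertion.

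For the second assertion I would argue by contradiction: if $C(K)$ were Plichko it would admit a projectional skeleton, i.e.\ a closed cofinal family $\Ef\subs\Sep(C(K))$ each of whose members is complemented, which is precisely the continuous SCP; but Theorem~\ref{thm-negative} forces $\io(M(K))<\infty$ for any $C(K)$ with the continuous SCP, contradicting the hypothesis $\io(M(K))=\infty$. The only substantial work lies in the first assertion, and the main obstacle there is to confirm that splitting the points of $M(K)$ leaves $L$ scattered while destroying every internal point of uncountable character; once that is secured, the rest is a direct appeal to the results of the preceding sections.
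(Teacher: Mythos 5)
Your proposal is correct and follows essentially the same route as the paper: split (duplicate) the points of $M(K)$ to obtain a scattered compact line $L$ of cardinality $\aleph_1$ with $M(L)=\emptyset$, embed $C(K)$ isometrically into $C(L)$ via the gluing surjection, invoke Theorem~\ref{thm-scattered}(1) and the density-$\aleph_1$ remark to get that $C(L)$ is $1$-Plichko, and derive the second assertion directly from Theorem~\ref{thm-negative}. The only difference is that you spell out the routine verifications (scatteredness of $L$, $M(L)=\emptyset$, cardinality) that the paper leaves implicit.
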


\begin{proof} The second statement follows immediately from Theorem~\ref{thm-negative}. Let us show the first statement. Let $K$ be a compact scattered line of cardinality $\aleph_1$. Let $L$ be the compact line made from $K$ by duplicating all the points of $M(K)$ (cf. the proof of the assertion (2) of Theorem~\ref{thm-scattered} above) and let $\varphi:L\to K$ be the canonical order preserving surjection which glues back the duplicated points.
Then $M(L)=\emptyset$, hence $L$ has a retractional skeleton by Theorem~\ref{thm-scattered}. As $L$ has cardinality $\aleph_1$, $C(L)$ is $1$-Plichko. Moreover, as $K$ is a continuous image of $L$, $C(K)$ is isometric to a subspace of $C(L)$, namely to
$\{f\circ \varphi: f\in C(K)\}.$
\end{proof}

\begin{tw} For each $n\in \Nat\cup\{-1,0,\infty\}$ there is a compact scattered line $K$ of cardinality $\aleph_1$, such that $\io(M(K))=n$. Moreover, there is a compact scattered line $K$ of cardinality $\aleph_1$ and $x\in M(K)$ such that $\io(x,M(K))=\infty$.
\end{tw}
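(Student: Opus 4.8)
The plan is to build all the required lines by iterated lexicographic sums over $\omega_1$, exploiting that the top of $[0,\omega_1]$ is a point of uncountable character. Throughout I keep three invariants: the line is scattered of cardinality $\aleph_1$, it has a unique point of maximal internal order (its \emph{peak}), and its minimum is isolated from the right. The cases $n=-1$ and $n=0$ are immediate. For $n=-1$ take $K=[0,\omega_1]$: every non-maximal point has an immediate successor, hence is isolated from the right and therefore external, so $M(K)=\emptyset$ and $\io(M(K))=-1$. For $n=0$ let $K_0$ be obtained by gluing $[0,\omega_1]$ and its order inverse $[0,\omega_1]^{-1}$ along their common top point $x_0$; then $x_0$ is approached cofinally by an $\omega_1$-sequence from both sides, so it is internal of uncountable character, while all other points remain external, giving $M(K_0)=\{x_0\}$ and $\io(M(K_0))=0$. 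Both lines satisfy the invariants.

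For a finite $n\ge1$ I define $K_n$ recursively as the lexicographic sum indexed by $K_0$ whose block over the peak of $K_0$ is the singleton $\{x_n\}$ and whose block over every other point of $K_0$ is a fresh copy of $K_{n-1}$; this is again a compact line, the invariants are readily preserved, and $|K_n|=\aleph_1\cdot|K_{n-1}|=\aleph_1$. The heart of the matter is that this operation raises the maximal internal order by exactly one. Since each block is convex and every block has its minimum isolated from the right, the two-sided accumulation relevant to the internal order of any point interior to a block happens entirely within that block; hence by the inductive hypothesis interior points keep their internal order -- in particular the unique order-$(n-1)$ peak of each copy stays of order $n-1$ -- and the minima of limit-indexed copies stay external (at those indices the approach from the left has cofinality $\omega$, so countable character, and they are isolated from the right). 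Finally the peaks of the copies are order-$(n-1)$ points that are cofinal below and coinitial above $x_n$ with uncountable cofinality; thus $x_n$ is internal of uncountable character with $\io(x_n,M(K_n))\ge n$, and as no point of order $\ge n$ lies on either side of $x_n$ we get $\io(x_n,M(K_n))=n$ and that $x_n$ is the unique point of maximal order.

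For $n=\infty$ I take the lexicographic sum of $(K_m)_{m<\omega}$ followed by one top point, so that each $K_m$ becomes a clopen block; the convexity argument again shows that $M$ of the sum is the disjoint union of the $M(K_m)$ with no accumulation across blocks, whence $\io(M(K))=\sup_m\io(M(K_m))=\infty$. For the final (\emph{moreover}) statement I construct a single peak of infinite internal order by repeating the construction of $K_n$ but filling the $\omega_1$ left-hand and right-hand slots with copies of $K_{g(\alpha)}$, where $g\colon\omega_1\to\omega$ attains every value on a cofinal subset of $\omega_1$. Then for each $m$ the peaks of order $\ge m$ are cofinal below and coinitial above the top point $x$, so $\io(x,M(K))\ge m$ for every $m$, i.e. $\io(x,M(K))=\infty$, while the uncountable cofinality on both sides keeps $x$ of uncountable character. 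All these lines are scattered of cardinality $\aleph_1$.

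The main obstacle is the simultaneous control of internal orders inside the $\omega_1$-indexed sums: I must guarantee both that the newly created peak has internal order exactly $n$ (and not more) and that no spurious internal point of uncountable character arises at the limit stages of the sum. Both are secured by the convexity of the blocks together with the invariant that every block has its minimum isolated from the right, which forces all block minima -- including those sitting over limit ordinals, where the left-hand approach has only cofinality $\omega$ -- to remain external and hence outside $M(K)$.
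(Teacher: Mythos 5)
Your construction is correct and follows essentially the same route as the paper: the same base cases $[0,\omega_1]$ and $\omega_1+1+\omega_1^{-1}$, the same inductive step of substituting copies of the previous stage into the $\omega_1$-fans on both sides of a distinguished point to raise the internal order by one, an $\omega$-indexed sum for $\io(M(K))=\infty$, and a cofinal interleaving of all finite stages on both sides of a point of uncountable character for the ``moreover'' part. The only cosmetic difference is that you substitute into every non-peak slot (relying on your ``minimum isolated from the right'' invariant to keep limit-indexed block endpoints external), whereas the paper substitutes only into the isolated points of $\omega_1+1+\omega_1^{-1}$; both work.
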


\begin{proof} If $n=-1$ we can set $K=[0,\omega_1]$. If $n=0$, we can set for example $K=\omega_1+1+\omega^{-1}$ or $K=\omega_1+1+\omega_1^{-1}$. 

We proceed by induction. Let $n\in \Nat\cup\{0\}$  and let $L$ be a compact scattered line of cardinality $\aleph_1$ with $\io(M(L))=n$. Let $K$ be the compact line made either from $\omega_1+1+\omega^{-1}$ or from  $\omega_1+1+\omega_1^{-1}$ by replacing each isolated point with a copy of $L$.
It is clear that then $\io(M(K))=n+1$.

Further, let $K_n$, $n\in\Nat$ be such that $\io(K_n)=n$. Let $\tilde K$ be the lexicographic sum of $K_n$, $n\in\Nat$, along $[1,\omega)$. Let $K$ be the one-point compactification of $\tilde K$ (made by adding the endpoint). Then $\io(M(K))=\infty$ and $\io(x,M(K))<\infty$ for each $x\in M(K)$.

Finally, let $A_n$ be the set of all countable ordinals of the form $\lambda+n$, where $\lambda$ is a limit ordinal. Let $K$ be the space made from
$\omega_1+1+\omega_1^{-1}$ by replacing,  for each $n\in\Nat$, every element of $A_n\cup A_{n}^{-1}$ with $K_n$. Then $\io(x,M(K))=\infty$ where $x$ is the ``middle point'' of $K$. A similar example can be constructed starting from $\omega_1+1+\omega^{-1}$.  
\end{proof}


\begin{thebibliography}{99}


%\bibitem{AMN} {\sc S. Argyros, S. Mercourakis, S. Negrepontis}, {\em %Functional-analytic properties of Corson-compact spaces\/}, 
%Studia Math. {\bf89} (1988), no. 3, 197--229.

\bibitem{AviKal} {\sc A. Avil\'es and O. Kalenda}, {\em Fiber orders and compact spaces of uncountable weight\/}, Fund. Math. {\bf 204} (2009), no. 1, 7--38.

\bibitem{Corson61} {\sc H.H. Corson}, {\em The weak topology of a Banach space\/}, Trans. Amer. Math. Soc. {\bf 101} (1961) 1--15.

\bibitem{Ferrer} {\sc J. Ferrer}, {\em On the controlled separable projection property for some $C(K)$ spaces\/}, Acta Math. Hungar. {\bf124} (2009), no. 1-2, 145--154.

\bibitem{Kalenda2000} {\sc O. Kalenda}, {\em Valdivia compact spaces in topology and Banach space theory\/}, Extracta Math. {\bf15} (2000), no. 1, 1--85.

\bibitem{K2006} {\sc W. Kubi\'s}, {\em Linearly ordered compacta and Banach spaces with a projectional resolution of the identity\/},\footnote{
Preprint available at \texttt{http://arxiv.org/abs/math.FA/0602628}.
} Topology Appl. {\bf154}, No. 3 (2007) 749--757.

\bibitem{skeletons} {\sc W. Kubi\'s}, {\em Banach spaces with projectional skeleton},  J. Math. Anal. Appl. 350 (2009), no. 2, 758--776.

\bibitem{Kunen} {\sc K. Kunen}, {\em Set theory. An introduction to independence proofs}. Studies in Logic and the Foundations of Mathematics, 102. North-Holland Publishing Co., Amsterdam-New York, 1980

\bibitem{Pel-diss} {\sc A. Pe\l czy\'nski}, {\em Linear extensions, linear averagings, and their applications to linear topological classification of spaces of continuous functions\/},
Dissertationes Math. {\bf58} (1968)

\bibitem{PliYos} {\sc A. Plichko,  D. Yost}, {\em Complemented and uncomplemented subspaces of Banach spaces\/}, Extracta Math. {\bf15} (2000) 335--371.

\bibitem{Shchepin} {\sc E. V. \v S\v cepin}, {\em
Topology of limit spaces with uncountable inverse spectra\/} (Russian),
Uspehi Mat. Nauk 31 (1976), no. 5 (191), 191--226. [English translation: Russian Math. Surveys 31 (1976), no. 5, 155--191.]


\end{thebibliography}
\end{document}